\begin{document}
\baselineskip 17.5pt
\hfuzz=6pt

\newtheorem{theorem}{Theorem}[section]
\newtheorem{prop}[theorem]{Proposition}
\newtheorem{lemma}[theorem]{Lemma}
\newtheorem{definition}[theorem]{Definition}
\newtheorem{cor}[theorem]{Corollary}
\newtheorem{example}[theorem]{Example}
\newtheorem{remark}[theorem]{Remark}
\newcommand{\ra}{\rightarrow}
\renewcommand{\theequation}
{\thesection.\arabic{equation}}
\newcommand{\ccc}{{\mathcal C}}
\newcommand{\one}{1\hspace{-4.5pt}1}

\def\RR{\mathbb R}

\newcommand{\al}{\alpha}
\newcommand{\be}{\beta}
\newcommand{\ga}{\gamma}
\newcommand{\Ga}{\Gamma}
\newcommand{\de}{\delta}
\newcommand{\ben}{\beta_n}
\newcommand{\De}{\Delta}
\newcommand{\ve}{\varepsilon}
\newcommand{\ze}{\zeta}
\newcommand{\Th}{\chi}
\newcommand{\ka}{\kappa}
\newcommand{\la}{\lambda}
\newcommand{\laj}{\lambda_j}
\newcommand{\lak}{\lambda_k}
\newcommand{\La}{\Lambda}
\newcommand{\si}{\sigma}
\newcommand{\Si}{\Sigma}
\newcommand{\vp}{\varphi}
\newcommand{\om}{\omega}
\newcommand{\Om}{\Omega}

\newcommand{\cs}{\mathcal S}
\newcommand{\csrn}{\mathcal S (\rn)}
\newcommand{\cm}{\mathcal M}
\newcommand{\cb}{\mathcal B}
\newcommand{\ce}{\mathcal E}
\newcommand{\cd}{D}
\newcommand{\cdp}{D'}
\newcommand{\csp}{\mathcal S'}

\newcommand{\lab}{\label}
\newcommand{\med}{\textup{med}}
\newcommand{\pv}{\textup{p.v.}\,}
\newcommand{\loc}{\textup{loc}}
\newcommand{\intl}{\int\limits}
\newcommand{\intlrn}{\int\limits_{\rn}}
\newcommand{\intrn}{\int_{\rn}}
\newcommand{\iintl}{\iint\limits}
\newcommand{\dint}{\displaystyle\int}
\newcommand{\diint}{\displaystyle\iint}
\newcommand{\dintl}{\displaystyle\intl}
\newcommand{\diintl}{\displaystyle\iintl}
\newcommand{\liml}{\lim\limits}
\newcommand{\suml}{\sum\limits}
\newcommand{\supl}{\sup\limits}
\newcommand{\f}{\frac}
\newcommand{\df}{\displaystyle\frac}
\newcommand{\p}{\partial}
\newcommand{\Ar}{\textup{Arg}}
\newcommand{\abssig}{\widehat{|\si_0|}}
\newcommand{\abssigk}{\widehat{|\si_k|}}
\newcommand{\tT}{\tilde{T}}
\newcommand{\tV}{\tilde{V}}
\newcommand{\wt}{\widetilde}
\newcommand{\wh}{\widehat}
\newcommand{\lp}{L^{p}}
\newcommand{\nf}{\infty}
\newcommand{\rrr}{\mathbf R}
\newcommand{\li}{L^\infty}
\newcommand{\rn}{\mathbf R^n}
\newcommand{\tf}{\tfrac}
\newcommand{\zzz}{\mathbf Z}

\newcommand{\MM}{\mathcal{M}}

\newcommand{\contain}[1]{\left( #1 \right)}
\newcommand{\ContainA}[1]{( #1 )}
\newcommand{\ContainB}[1]{\bigl( #1 \bigr)}
\newcommand{\ContainC}[1]{\Bigl( #1 \Bigr)}
\newcommand{\ContainD}[1]{\biggl( #1 \biggr)}
\newcommand{\ContainE}[1]{\Biggl( #1 \Biggr)}

\allowdisplaybreaks

\title[$H^1_{\Delta_N}$ and $\rm BMO_{\Delta_N}$ via Weak Factorizations and Commutators]{Characterizations of $H^1_{\Delta_N}(\mathbb{R}^n)$ and ${\rm BMO}_{\Delta_N}(\mathbb{R}^n)$ via Weak Factorizations and Commutators}
\thanks{JL's research supported by ARC DP 160100153. BDW's research supported in part by National Science Foundation DMS \# 0955432 and \#1560955}
\thanks{{\it {\rm 2010} Mathematics Subject Classification:} Primary: 42B35, 42B25.}
\thanks{{\it Key words:}
 Hardy space, Neumann Laplacian, semigroup, Gaussian bounds,
 commutators, Riesz transforms, Littlewood--Paley, area function, radial maximal function associated with operators.}

\author{Ji Li}
\address{Ji Li, Department of Mathematics, Macquarie University, Sydney}
\email{ji.li@mq.edu.au}

\author{Brett D. Wick}
\address{Brett D. Wick, Department of Mathematics\\
         Washington University - St. Louis\\
         St. Louis, MO 63130-4899 USA
         }
\email{wick@math.wustl.edu}

\begin{abstract}
This paper provides a deeper study of the Hardy and $\rm BMO$ spaces associated to the Neumann Laplacian $\Delta_N$.  For the Hardy space $H^1_{\Delta_N}(\mathbb{R}^n)$ (which is a proper subspace of the classical Hardy space $H^1(\mathbb{R}^n)$) we demonstrate that the space has equivalent norms in terms of Riesz transforms, maximal functions, atomic decompositions, and weak factorizations.  While for the space ${\rm BMO}_{\Delta_N}(\mathbb{R}^n)$ (which contains the classical $\rm BMO(\mathbb{R}^n)$) we prove that it can be characterized in terms of the action of the Riesz transforms associated to the Neumann Laplacian on $L^\infty(\mathbb{R}^n)$ functions and in terms of the behavior of the commutator with the Riesz transforms.  The results obtained extend many of the fundamental results known for $H^1(\mathbb{R}^n)$ and $\rm BMO(\mathbb{R}^n)$.
\end{abstract}

\maketitle


\bigskip

\section{Introduction and Statement of Main Results}
\setcounter{equation}{0}

The spaces $H^1(\mathbb{R}^n)$ and $\rm BMO(\mathbb{R}^n)$ are fundamental function spaces in harmonic analysis.  The work of Fefferman and Stein, \cite{FS}, provides a duality relationship between $H^1(\mathbb{R}^n)$ and $\rm BMO(\mathbb{R}^n)$.  And, further provides characterizations of these spaces in terms of maximal functions, square functions, and Riesz transforms.  While the work of Coifman, Rochberg and Weiss, \cite{CRW}, provides a connection between weak factorization of the Hardy spaces, commutators with Riesz transforms and $\rm BMO(\mathbb{R}^n)$.  The main goals of this paper are to provide similar connections for $H^1$ and $\rm BMO$ spaces adapted to a particular linear differential operator.

There is a substantial literature related to $H^1$ and $\rm BMO$ spaces adapted to a linear operator $L$ on $L^2(\mathbb{R}^n)$ which generates an analytic semigroup $e^{-tL}$ on $L^2({\mathbb R}^n)$ with a  kernel $p_t(x,y)$ satisfying an upper bound.  That is, operators $L$ for which the kernel of the semigroup $p_t(x,y)$ there exists positive constants $m$ and $\epsilon$ such that for all $x,y\in {\mathbb R}^n$ and for all $t>0$:
\begin{equation}
\label{upper bound of pt}
|p_t(x,y)|\leq \frac{Ct^{\frac{\epsilon}{m}}}{(t^{\frac{1}{m}}
+|x-y|)^{n+\epsilon}}.
\end{equation}

In \cite{AuDM}, Auscher, Duong, and McIntosh defined a Hardy space $H^1_L({\mathbb R}^n)$ associated with such operators $L$ as the class of all functions $f\in L^1(\mathbb{R}^n)$ for which $S_L(f)\in L^1(\mathbb{R}^n)$, where $S_L(f)$ is Littlewood--Paley area function defined as follows.
\begin{equation}
\label{area function}
S_L(f)(x)= \bigg( \int_0^{\infty}\!\!\!\int_{|y-x|<t}  |Q_{t^m}f(y)|^2  \,\frac{dydt}{t^{n+1}} \bigg)^{\frac{1}{2}},
\end{equation}
with $Q_t=tLe^{-tL}$.  The $H^1_L({\mathbb R}^n)$ norm of $f$ is defined as $\|f\|_{H^1_L({\mathbb R}^n)}=\|S_L(f)\|_{L^1({\mathbb R}^n)}$. 

In \cites{DY1,DY2}, Duong and Yan defined the function space ${\rm BMO}_L({\mathbb R}^n)$ associated with an operator $L$.  They then go on to prove that if $L$ has a bounded holomorphic functional calculus on $L^2(\mathbb{R}^n)$  and the kernel $p_t(x,y)$ of the semigroup $e^{-tL}$ satisfies the upper bound (\ref{upper bound of pt}), then the space ${\rm BMO}_{L^{\ast}}({\mathbb R}^n)$ is the dual space of Hardy space $H^1_L({\mathbb R}^n)$ in which  $L^{\ast}$ denotes the adjoint operator of $L$. This gives a generalization of the duality of  $H^1({\mathbb R}^n)$ and ${\rm BMO}({\mathbb R}^n)$ of Fefferman and Stein \cite{FS}. Later, the theory of function spaces associated with operators has been developed and generalized to many other different settings, see for example \cite{HM,AMR,HLMMY,JY,DL}.

The choice of $L=\Delta$ gives rise to the spaces to the classical spaces $H^1(\mathbb{R}^n)$ and $\rm BMO(\mathbb{R}^n)$.  While the choice of the semigroup $e^{-tL}$ is the Poisson  semigroup $e^{-t\sqrt{\Delta}}$ (here $m=2$), given by
\begin{eqnarray}\label{classical case}
\ \ \ \ \  e^{-t\sqrt{\Delta}}f(x)=\int_{{\mathbb
R}^n}p_{t}(x-y)f(y)\,dy,\ t>0, \ \ {\rm where}\ \ p_{t}(x)=\frac{c_nt}{
(t^2+|x|^2)^{\frac{n+1}{2}} }
\end{eqnarray}
yields the spaces $H^1_{\sqrt{\Delta}}({\mathbb R})$
and ${\rm BMO}_{\sqrt{\Delta}}({\mathbb R})$ coincide with the
classical Hardy space and BMO space, respectively (see \cite{AuDM} and
\cite{DY1}).

In \cite{DDSY}, Deng, Duong, Sikora, and Yan further considered the
comparison of ${\rm BMO}_{L}({\mathbb R}^n)$ and ${\rm BMO}({\mathbb
R}^n)$. By considering the Neumann Laplacian $L=\Delta_N$, they obtained that
$$
{\rm BMO}({\mathbb R}^n) \subsetneq {\rm BMO}_{\Delta_N}({\mathbb R}^n).
$$
Recently, in \cite{Yan} Yan introduced a class of $H^p_L(\mathbb{R}^n)$ for a range of $p\leq1$ by using the Littlewood--Paley area function $S_L(f)$.  In particular, Yan showed that
$$
H^p_{\Delta_N}(\mathbb{R}^n)\subsetneq H^p(\mathbb{R}^n), \ \ \ \frac{n}{n+1}<p\leq 1.
$$

The main goal of this paper is to carry out a deeper study of the spaces $H^1_{\Delta_N}(\mathbb{R}^n)$ and $\rm BMO_{\Delta_N}(\mathbb{R}^n)$.  Interestingly, we show that these spaces behave in an analogous fashion as the standard Hardy space $H^1(\mathbb{R}^n)$ and $\rm BMO(\mathbb{R}^n)$.

We first explicitly compute the Riesz transforms $R_N=\nabla \Delta_N^{-\frac{1}{2}}$ associated to the Neumann Laplacian.  Because of the close connection between the Laplacian and Neumann Laplacian, we find in Proposition \ref{p:RieszKernel} that the Riesz transforms associated to the Neumann Laplacian are given by an additive perturbation of the standard Riesz transforms.

Our first main result shows that, similar to the classical Hardy space, the space $H^1_{\Delta_N}(\mathbb{R}^n)$ can be characterized by
the radial and non-tangential maximal functions,
by the Riesz transforms,  and by atoms, all of which are defined in terms of the Neumann Laplacian $\Delta_N$.
To be more precise, we denote by $H^1_{\Delta_N,max}(\mathbb{R}^n)$ the Hardy space defined via the radial maximal function associated with $\Delta_N$, and analogously by  $H^1_{\Delta_N,*}(\mathbb{R}^n)$, $H^1_{\Delta_N,Riesz}(\mathbb{R}^n)$ and $H^1_{\Delta_N,atom}(\mathbb{R}^n)$ the Hardy spaces via non-tangential maximal functions,
Riesz transforms  and  atoms, respectively. Then we have the following characterizations.

\begin{theorem}
\label{t:theorem characterization}
Let all notation be the same as above. We have
$$ H^1_{\Delta_N}(\mathbb{R}^n) =
H^1_{\Delta_N,max}(\mathbb{R}^n)=H^1_{\Delta_N,*}(\mathbb{R}^n)=H^1_{\Delta_N,Riesz}(\mathbb{R}^n) =H^1_{\Delta_N,atom}(\mathbb{R}^n)
$$
and with equivalent norms
\begin{eqnarray*}
\|f\|_{H^1_{\Delta_N}(\mathbb{R}^n)}&\thickapprox&
\|f\|_{H^1_{\Delta_N,max}(\mathbb{R}^n)}\thickapprox \|f\|_{H^1_{\Delta_N,Riesz}(\mathbb{R}^n)}
\thickapprox\|f\|_{H^1_{\Delta_N,atom}(\mathbb{R}^n)}\\
&\thickapprox&\|f_{+,e}\|_{H^1(\mathbb{R}^n)}+\|f_{-,e}\|_{H^1(\mathbb{R}^n)}.
\end{eqnarray*}
Here $f_{\pm,e}$ is the even extension of the restriction of $f$ from $\mathbb{R}^n_{\pm}$.  Namely, $f\in H^1_{\Delta_N}(\mathbb{R}^n)$ if and only if
$f_{+,e}\in H^1(\mathbb{R}^n)$ and $f_{-,e}\in H^1(\mathbb{R}^n)$.
\end{theorem}
For more details on these Hardy spaces and the norms we refer to Section \ref{s:H1BMO}.  We also obtain a Fefferman--Stein decomposition of ${\rm BMO}_{\Delta_N}(\mathbb{R}^n)$ in terms of the action of the Riesz transforms associated to the Neumann Laplacian on $L^\infty(\mathbb{R}^n)$ functions. 
\begin{cor}
\label{c:FS}
The following are equivalent for a function $b$:
\begin{itemize}
\item[(i)] $b\in {\rm BMO}_{\Delta_N}(\mathbb{R}^n)$;
\item[(ii)] There exists $b_0, b_1,\ldots, b_n\in L^\infty(\mathbb{R}^n)$ such that $b=b_0+\sum_{j=1}^n R_{N, j}^{\ast}b_j$, where
$R_{N, j}^{\ast}$ is the adjoint operator of $R_{N, j}$.
\end{itemize}
\end{cor}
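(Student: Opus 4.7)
The plan is to deduce the corollary directly from Theorem \ref{t:theorem characterization} by a standard Hahn--Banach duality argument, using the fact that the Riesz transform characterization identifies $H^1_{\Delta_N}(\mathbb{R}^n)$ with a closed subspace of $L^1 \oplus (L^1)^n$, combined with the fact (to be used from the $H^1$-$\rm BMO$ duality in \cites{DY1,DY2}) that $\rm BMO_{\Delta_N}(\mathbb{R}^n)$ is the dual of $H^1_{\Delta_N}(\mathbb{R}^n)$.

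The direction (ii)$\Rightarrow$(i) is the easy one. Given $b = b_0 + \sum_{j=1}^n R_{N,j}^{\ast} b_j$ with $b_j \in L^\infty(\mathbb{R}^n)$, I would pair $b$ against a suitable dense class of $H^1_{\Delta_N}(\mathbb{R}^n)$ (for instance finite linear combinations of atoms) and move the Riesz transforms to the other side, obtaining
\[
\langle b,f\rangle \;=\; \langle b_0,f\rangle \;+\; \sum_{j=1}^n \langle b_j,R_{N,j}f\rangle,
\]
which is bounded by $\bigl(\|b_0\|_\infty + \sum_j \|b_j\|_\infty\bigr)\bigl(\|f\|_{L^1} + \sum_j \|R_{N,j}f\|_{L^1}\bigr) \lesssim \|f\|_{H^1_{\Delta_N,Riesz}(\mathbb{R}^n)} \thickapprox \|f\|_{H^1_{\Delta_N}(\mathbb{R}^n)}$, by Theorem \ref{t:theorem characterization}. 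Hence $b$ lies in the dual of $H^1_{\Delta_N}(\mathbb{R}^n)$, which is $\rm BMO_{\Delta_N}(\mathbb{R}^n)$.

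The nontrivial direction is (i)$\Rightarrow$(ii). Consider the linear map
\[
\Phi : H^1_{\Delta_N}(\mathbb{R}^n)\longrightarrow L^1(\mathbb{R}^n)^{n+1},\qquad \Phi(f)=(f,R_{N,1}f,\ldots,R_{N,n}f).
\]
By Theorem \ref{t:theorem characterization}, $\Phi$ is an isomorphism of $H^1_{\Delta_N}(\mathbb{R}^n)$ onto a closed subspace $V \subset L^1(\mathbb{R}^n)^{n+1}$ equipped with the sum of the $L^1$ norms. A given $b \in \rm BMO_{\Delta_N}(\mathbb{R}^n)$ defines a bounded linear functional $\ell_b$ on $H^1_{\Delta_N}(\mathbb{R}^n)$, which via $\Phi^{-1}$ becomes a bounded linear functional on $V$. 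By the Hahn--Banach theorem, extend it to a bounded linear functional on $L^1(\mathbb{R}^n)^{n+1}$; by the Riesz representation theorem for $(L^1)^*=L^\infty$, this extension is represented by some $(b_0,b_1,\ldots,b_n)\in L^\infty(\mathbb{R}^n)^{n+1}$. Unwinding the identification gives, for all $f$ in a suitable dense subclass,
\[
\langle b,f\rangle \;=\; \langle b_0,f\rangle \;+\; \sum_{j=1}^n \langle b_j, R_{N,j}f\rangle \;=\; \Bigl\langle b_0 + \sum_{j=1}^n R_{N,j}^{\ast} b_j,\; f\Bigr\rangle,
\]
yielding the desired decomposition modulo the natural equivalence (constants, or the quotient appearing in the definition of $\rm BMO_{\Delta_N}(\mathbb{R}^n)$).

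The main technical obstacle is interpreting $R_{N,j}^{\ast} b_j$ correctly when $b_j$ is only in $L^\infty(\mathbb{R}^n)$, since $R_{N,j}^{\ast}$ does not map $L^\infty$ into $L^\infty$. This has to be handled in the sense already developed for the classical Fefferman--Stein decomposition: the action of $R_{N,j}^{\ast}$ on $L^\infty$ is defined by duality against $H^1_{\Delta_N}$-atoms (using the $L^1$-mapping of $R_{N,j}$ from $H^1_{\Delta_N}$ into $L^1$ proved in Theorem \ref{t:theorem characterization}), so that $R_{N,j}^{\ast}b_j$ is a well-defined element of $\rm BMO_{\Delta_N}(\mathbb{R}^n)$ and the pairing above makes sense. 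Once this is set up, the argument is complete; the decomposition is not unique but existence is what is being claimed.
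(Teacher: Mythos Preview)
Your argument is correct and follows essentially the same route as the paper's proof: embed $H^1_{\Delta_N}(\mathbb{R}^n)$ isometrically into $L^1(\mathbb{R}^n)^{n+1}$ via $f\mapsto(f,R_{N,1}f,\ldots,R_{N,n}f)$ using the Riesz characterization, apply Hahn--Banach to extend the functional induced by $b$, and represent the extension by an $L^\infty$ tuple; the converse direction likewise uses that $R_{N,j}:H^1_{\Delta_N}\to L^1$ implies $R_{N,j}^*:L^\infty\to\mathrm{BMO}_{\Delta_N}$ by duality. Your additional remark on interpreting $R_{N,j}^*b_j$ for $b_j\in L^\infty$ via duality is a point the paper leaves implicit.
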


We then further show the connection between ${\rm BMO}_{\Delta_N}(\mathbb{R}^n)$, $H^1_{\Delta_N}(\mathbb{R}^n)$, commutators of functions in ${\rm BMO}_{\Delta_N}(\mathbb{R}^n)$ and Riesz transforms $R_N$ relative to $\Delta_N$, and a weak factorization of the space $H^1_{\Delta_N}(\mathbb{R}^n)$.  In particular, our second main result is the following theorem.


\begin{theorem}
\label{weakfactorization}
For $1\leq l\leq n$, let $\Pi_l(h,g)=h\cdot R^*_{N,l}(g)-g\cdot R_{N,l}(h)$, where $R_{N,l}={\partial\over\partial x_l} \Delta_N^{-\frac{1}{2}}$ is the $l$-th Riesz transform associated to the Neumann Laplacian and $R_{N,l}^*$ is the adjoint operator of $R_{N,l}$.  Then for any $f\in H^1_{\Delta_{N}}(\mathbb{R}^n)$ there exists sequences $\{\lambda_j^k\}\in \ell^1$ and functions $g_j^{k}, h_j^{k}\in L^\infty(\mathbb{R}^n)$ with compact supports such that $f=\sum_{k=1}^\infty\sum_{j=1}^{\infty} \lambda_{j}^{k}\,\Pi_l(g_j^{k}, h_j^{k})$.  Moreover, we have that:
$$
\left\Vert f\right\Vert_{H^1_{\Delta_{N}}(\mathbb{R}^n)} \approx \inf\left\{\sum_{k=1}^\infty \sum_{j=1}^{\infty} \left\vert \lambda_j^k\right\vert \left\Vert g_j^k\right\Vert_{L^2(\mathbb{R}^n)}\left\Vert h_j^k\right\Vert_{L^2(\mathbb{R}^n)}:\ \ f=\sum_{k=1}^\infty\sum_{j=1}^{\infty}\lambda_j^k \,\Pi_l(g_j, h_j) \right\}.
$$
\end{theorem}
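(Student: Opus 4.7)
The plan is to follow the Coifman--Rochberg--Weiss scheme, splitting into the upper bound (easier direction) and the existence of a weak factorization (harder direction). For the upper bound, a direct computation using adjointness gives the pairing identity
\begin{equation*}
\int_{\mathbb{R}^n} b(x)\,\Pi_l(g,h)(x)\,dx = \int_{\mathbb{R}^n} g(x)\,[b,R_{N,l}]h(x)\,dx.
\end{equation*}
Combined with the $H^1_{\Delta_N}(\mathbb{R}^n)$--$\rm BMO_{\Delta_N}(\mathbb{R}^n)$ duality of Duong--Yan, this reduces the desired bound $\|f\|_{H^1_{\Delta_N}}\lesssim\sum_{k,j}|\lambda_j^k|\|g_j^k\|_{L^2}\|h_j^k\|_{L^2}$ to the operator estimate $\|[b,R_{N,l}]\|_{L^2\to L^2}\lesssim \|b\|_{\rm BMO_{\Delta_N}}$. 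I would establish the latter by combining the Fefferman--Stein decomposition of Corollary \ref{c:FS} (writing $b=b_0+\sum_j R_{N,j}^{\ast}b_j$ with $b_i\in L^\infty$) with the explicit kernel formula of Proposition \ref{p:RieszKernel}: the classical component of $R_{N,l}$ falls under the standard CRW commutator theory, while the reflection perturbation is smooth away from the boundary hyperplane and its commutator is straightforwardly $L^2$-bounded.

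For the factorization direction, I would use the atomic characterization in Theorem \ref{t:theorem characterization} to reduce the task to expressing each $\Delta_N$-atom $a$ as a convergent series $\sum_j \nu_j\Pi_l(g_j,h_j)$ with $\sum_j|\nu_j|\|g_j\|_2\|h_j\|_2\lesssim 1$. For an atom $a$ supported in a ball $B=B(x_0,r)$ lying well inside one half-space, I would adapt the constructive Uchiyama/CRW scheme: pick a companion ball $B'=B(y_0,r)$ in the same half-space with $|x_0-y_0|\sim r$, set $h=|B'|^{-1/2}\chi_{B'}$, and choose $g$ so that $\Pi_l(g,h)$ reproduces the leading contribution of $a$ on $B$. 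By Proposition \ref{p:RieszKernel}, $R_{N,l}$ differs from the classical $R_l$ only by a reflected piece, which is smooth and controllably small relative to $B,B'$; the local analysis therefore mirrors the classical calculation, and the residual $a-\nu\Pi_l(g,h)$ is a new $\Delta_N$-atom of strictly smaller $L^\infty$-norm. Iterating yields the required series with geometric decay, and assembling over the atomic decomposition of $f$ produces the double series in the statement.

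The principal obstacle is two-fold. First, the commutator bound for $[b,R_{N,l}]$ cannot rely on John--Nirenberg alone, because $\rm BMO_{\Delta_N}(\mathbb{R}^n)$ is strictly larger than the classical $\rm BMO(\mathbb{R}^n)$; the argument must genuinely exploit the Fefferman--Stein structure of Corollary \ref{c:FS}, and the subtle step is verifying that $[R_{N,j}^{\ast}b_j,R_{N,l}]$ with $b_j\in L^\infty$ is $L^2$-bounded despite $R_{N,j}^{\ast}b_j$ only lying in $\rm BMO_{\Delta_N}$. Second, for atoms whose supports meet or lie near the boundary hyperplane $\{x_n=0\}$, the reflection part of the Neumann Riesz kernel can no longer be treated as a small perturbation in the iterative construction; here I would fall back on the even-extension characterization in Theorem \ref{t:theorem characterization}, splitting $a$ into half-space pieces, applying the classical CRW weak factorization to the even extensions $f_{\pm,e}\in H^1(\mathbb{R}^n)$, and then translating the resulting factorizations from classical Riesz transforms back to $R_{N,l}$ via the explicit kernel comparison of Proposition \ref{p:RieszKernel}.
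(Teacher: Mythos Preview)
Your upper-bound plan has a circularity you identify but do not resolve. Decomposing $b=b_0+\sum_j R_{N,j}^{\ast}b_j$ via Corollary~\ref{c:FS} reduces $[b,R_{N,l}]$ to $[b_0,R_{N,l}]$ (trivially bounded since $b_0\in L^\infty$) together with $[R_{N,j}^{\ast}b_j,\,R_{N,l}]$; but the symbol $R_{N,j}^{\ast}b_j$ is a generic ${\rm BMO}_{\Delta_N}$ function with no additional structure, so this last term is exactly the estimate you are trying to prove. Splitting $R_{N,l}=R_l+K_{N,l}$ does not help either: the term $[b,R_l]$ would require $b\in{\rm BMO}(\mathbb{R}^n)$, which is strictly smaller than ${\rm BMO}_{\Delta_N}(\mathbb{R}^n)$. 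The paper avoids all of this by the even-extension trick that you yourself invoke only later: for $x\in\mathbb{R}^n_+$ one has $R_{N,l}f(x)=R_l f_{+,e}(x)$ and $(bf)_{+,e}=b_{+,e}f_{+,e}$, so $[b,R_{N,l}]f(x)=[b_{+,e},R_l]f_{+,e}(x)$, and now $b_{+,e}\in{\rm BMO}(\mathbb{R}^n)$ places you squarely in the classical CRW setting. This one-line reduction is the missing ingredient on the commutator side.

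Your factorization plan has two further gaps. First, the residual $a-\nu\,\Pi_l(g,h)$ cannot be a $\Delta_N$-atom: $R_{N,l}g$ and $R_{N,l}^{\ast}h$ are not compactly supported, so neither is $\Pi_l(g,h)$, and the iteration ``residual is again an atom with smaller $L^\infty$-norm'' breaks down immediately. The paper's remedy is to place the companion ball $B(y_0,r)$ at distance $Mr$ with $M$ large (not $\sim r$), show the residual satisfies $|W|\le CM^{-1}r^{-n}(\chi_{B(x_0,r)}+\chi_{B(y_0,r)})$ with $\int W=0$, and invoke a separate lemma giving $\|W\|_{H^1_{\Delta_N}}\le C(\log M)/M$; one then iterates this $\varepsilon$-approximation on the level of $H^1_{\Delta_N}$-norms, not atoms. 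Second, your fallback for boundary atoms---applying classical CRW to $f_{\pm,e}$ and translating back---is not viable as stated, since the classical factorization pieces carry no evenness and the relation $R_{N,l}f|_{\mathbb{R}^n_+}=R_l f_{+,e}$ has no useful inverse. The paper handles all atoms uniformly, with no boundary case split: it simply chooses $y_0$ so that $B(y_0,r)\subset\mathbb{R}^n_+$, and uses the partial cancellation $\int_{B_+}a=0$ from Proposition~\ref{prop cancellation} (rather than only $\int a=0$) when estimating the $W_2$ piece of the residual.
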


We then obtain the following new characterization of ${\rm BMO}_{\Delta_N}(\mathbb{R}^n)$ in terms of the commutators with the Riesz transforms associated to $\Delta_N$.

\begin{theorem}
\label{c:bmo}
Suppose $b\in \cup_{p\geq 1} L^p_{loc}(\mathbb R^n)$.

If $b$ is in ${\rm BMO}_{\Delta_N}(\mathbb{R}^n)$, then for $1\leq l\leq n$, the commutator
$$[b,R_{N,l}](f)(x)= b(x)R_{N,l}(f)(x) - R_{N,l}(bf)(x) $$
is a bounded map on $L^2(\mathbb{R}^n)$, with operator norm
$$
\|[b,R_{N,l}]:L^2(\mathbb{R}^n)\to L^2(\mathbb{R}^n)\| \leq C\|b\|_{\rm BMO_{\Delta_N}(\mathbb{R}^n)}.
$$

Conversely,  for $1\leq l\leq n$, if $[b,R_{N,l}]$ are bounded on $L^2(\mathbb{R}^n)$ then $b$ is in ${\rm BMO}_{\Delta_N}(\mathbb{R}^n)$ and $ \|b\|_{{\rm BMO}_{\Delta_N}(\mathbb{R}^n)} \leq C \| [b,R_{N,l}] :L^2(\mathbb{R}^n)\to L^2(\mathbb{R}^n)\|$.
\end{theorem}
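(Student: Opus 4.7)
The plan is to derive both directions from Theorem \ref{weakfactorization} and the $H^1_{\Delta_N}$--$\rm BMO_{\Delta_N}$ duality of Duong--Yan, linked by a single algebraic identity. For $b$ locally integrable and $f, g$ in a dense subclass of $L^2(\mathbb{R}^n)$, moving $R_{N,l}$ onto its adjoint yields
\begin{equation*}
\langle [b, R_{N,l}]f, g \rangle = \langle b\, R_{N,l}f, g \rangle - \langle bf, R_{N,l}^{\ast}g \rangle = \langle b, g R_{N,l}f - f R_{N,l}^{\ast}g \rangle = \langle b, \Pi_l(g, f) \rangle,
\end{equation*}
recasting the commutator action as the bilinear form $\Pi_l$ tested against $b$.

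For the forward (upper bound) direction, fix $b \in \rm BMO_{\Delta_N}(\mathbb{R}^n)$. The ``easy half'' of Theorem \ref{weakfactorization}, namely that $\Pi_l$ maps $L^2(\mathbb{R}^n) \times L^2(\mathbb{R}^n)$ into $H^1_{\Delta_N}(\mathbb{R}^n)$ with $\|\Pi_l(g, f)\|_{H^1_{\Delta_N}} \leq C \|f\|_{L^2} \|g\|_{L^2}$, follows from applying the norm equivalence in that theorem to the trivial one-term representation $\Pi_l(g, f) = 1 \cdot \Pi_l(g, f)$. Combining this with $H^1_{\Delta_N}$--$\rm BMO_{\Delta_N}$ duality gives
\begin{equation*}
|\langle [b, R_{N,l}]f, g \rangle| = |\langle b, \Pi_l(g, f) \rangle| \leq C \|b\|_{\rm BMO_{\Delta_N}(\mathbb{R}^n)} \|f\|_{L^2(\mathbb{R}^n)} \|g\|_{L^2(\mathbb{R}^n)}.
\end{equation*}
Taking the supremum over $\|g\|_{L^2} \leq 1$ and extending by density produces the stated $L^2$ operator norm bound.

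For the converse (lower bound), assume $[b, R_{N,l}]$ is bounded on $L^2(\mathbb{R}^n)$. We estimate $\langle b, f \rangle$ for arbitrary $f \in H^1_{\Delta_N}(\mathbb{R}^n)$ using the harder direction of Theorem \ref{weakfactorization}, which supplies a representation $f = \sum_{j, k} \lambda_j^k \Pi_l(g_j^k, h_j^k)$ with $\sum_{j, k} |\lambda_j^k| \|g_j^k\|_{L^2} \|h_j^k\|_{L^2} \leq C \|f\|_{H^1_{\Delta_N}}$. Applying the key identity term-by-term,
\begin{equation*}
|\langle b, f \rangle| \leq \sum_{j, k} |\lambda_j^k|\, |\langle [b, R_{N,l}] h_j^k, g_j^k \rangle| \leq \|[b, R_{N,l}]\|_{L^2 \to L^2} \sum_{j, k} |\lambda_j^k| \|g_j^k\|_{L^2} \|h_j^k\|_{L^2} \leq C \|[b, R_{N,l}]\| \|f\|_{H^1_{\Delta_N}}.
\end{equation*}
By the $H^1_{\Delta_N}$--$\rm BMO_{\Delta_N}$ duality, $b \in \rm BMO_{\Delta_N}(\mathbb{R}^n)$ with the desired norm bound.

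The principal obstacle is technical rather than conceptual: one must justify the algebraic identity on a class of pairs $(f, g)$ dense enough in $L^2 \times L^2$ to conclude via the usual extension argument, and must verify a priori that the hypothesis $[b, R_{N,l}]$ bounded on $L^2$ is strong enough to force $b$ to be locally integrable, so that the pairing $\langle b, f \rangle$ against $H^1_{\Delta_N}$-elements is meaningful. Both points are handled by working with bounded, compactly supported test functions and exploiting the kernel representation of $R_{N,l}$ from Proposition \ref{p:RieszKernel}; once this is in place, the argument is a direct consequence of Theorem \ref{weakfactorization} together with Duong--Yan duality, in the spirit of the classical Coifman--Rochberg--Weiss commutator theorem.
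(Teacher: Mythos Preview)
Your lower-bound argument is exactly the paper's: expand $f\in H^1_{\Delta_N}$ via the weak factorization of Theorem~\ref{weakfactorization}, use the identity $\langle b,\Pi_l(g,h)\rangle=\langle [b,R_{N,l}]h,g\rangle$ termwise, and invoke $H^1_{\Delta_N}$--$\mathrm{BMO}_{\Delta_N}$ duality.

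For the upper bound you take a genuinely different route. You extract $\|\Pi_l(g,f)\|_{H^1_{\Delta_N}}\leq C\|f\|_{L^2}\|g\|_{L^2}$ from the $\lesssim$ half of the norm equivalence in Theorem~\ref{weakfactorization} and then dualize. The paper instead proves the commutator bound \emph{directly and first}: for $x\in\mathbb{R}^n_+$ one has $[b,R_{N,l}](f)(x)=[b_{+,e},R_l](f_{+,e})(x)$ (and symmetrically on $\mathbb{R}^n_-$), so the classical Coifman--Rochberg--Weiss theorem applied to $b_{\pm,e}\in\mathrm{BMO}(\mathbb{R}^n)$ gives the $L^2$ bound immediately. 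Only afterwards does the paper deduce the $\Pi_l$ estimate (Theorem~\ref{t:upperbound}) from this commutator bound by the same duality computation you use in reverse. Your approach is cleaner if Theorem~\ref{weakfactorization} is granted as a black box, but be aware of the logical dependency: in the paper the $\lesssim$ direction of Theorem~\ref{weakfactorization} is \emph{established via} the commutator upper bound, so your derivation would be circular if you were asked to prove both theorems. The paper's direct argument buys an independent proof of the upper bound that feeds into Theorem~\ref{weakfactorization}, while your argument buys economy once the weak factorization is in hand.
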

We point out that Theorem \ref{weakfactorization} and Theorem \ref{c:bmo} can be extended to work for $L^p(\mathbb{R}^n)$ when $1<p<\infty$.  

For $0<\alpha<n$, the fractional operator $\Delta_N^{-\alpha/2}$ of the operator $\Delta_N$ is defined by
$$ \Delta_N^{-\alpha/2}f(x) = {1\over \Gamma(\alpha/2)} \int_0^\infty e^{-t\Delta_N}(f)(x) {dt\over t^{1-\alpha/2}}. $$
\begin{theorem}
\label{c:bmo 1}
If $b$ is in ${\rm BMO}_{\Delta_N}(\mathbb{R}^n)$, then for $1<\alpha < n$, the commutator
$$[b,\Delta_N^{-\alpha/2}](f)(x)= b(x)\Delta_N^{-\alpha/2}(f)(x) - \Delta_N^{-\alpha/2}(bf)(x) $$
is a bounded map from $L^p(\mathbb{R}^n)$ to $L^q(\mathbb{R}^n)$ with operator norm
$$
\|[b,\Delta_N^{-\alpha/2}]:L^p(\mathbb{R}^n)\to L^q(\mathbb{R}^n)\| \leq C\|b\|_{{\rm BMO}_{\Delta_N}(\mathbb{R}^n)},
$$
where $1<p<{n\over \alpha}$ and ${1\over q} = {1\over p} - {\alpha\over n}$.

\end{theorem}

The paper is organized as follows. In Section \ref{s:NeumannRiesz}, we collect the background for the Neumann Laplacian and the associated Riesz transforms.  In Section \ref{s:H1BMO} the related Hardy and $\rm BMO$ spaces associated to $\Delta_N$ are studied and their basic properties are collected.  In particular, we demonstrate a collection of equivalent norms for $H^1_{\Delta_N}(\mathbb{R}^n)$, Theorem \ref{theorem characterization}, and show the Fefferman-Stein decomposition of ${\rm BMO}_{\Delta_N}(\mathbb{R}^n)$ holds, Corollary \ref{c:FS}.  Finally, in Section \ref{s:factorization} we provide the proof of Theorems \ref{weakfactorization} and \ref{c:bmo}.  Throughout this paper, the letter ``$C$" will denote, possibly different, constants  that are independent of the essential variables.

\section{The Neumann Laplacian and the Associated Riesz Kernels}
\setcounter{equation}{0}
\label{s:NeumannRiesz}

We now recall some notation and basic facts introduced in \cite{DDSY}*{Section 2}. For any subset $A\subset \mathbb{R}^n$ and a function $f: \mathbb{R}^n\rightarrow\mathbb{C}$ by $f|_A$ we denote the restriction of $f$ to $A$.   Next we set $\mathbb{R}^n_+=\{(x',x_n)\in \mathbb{R}^n: x'=(x_1,\ldots,x_{n-1})\in \mathbb{R}^{n-1},x_n>0\}$.
For any function $f$ on
$\mathbb{R}^n$, we set
$$ f_+=f|_{\mathbb{R}^n_+}\ \ \ {\rm and}\ \ \ f_-=f|_{\mathbb{R}^n_-}. $$
 For any $x=(x',x_n)\in\mathbb{R}^n$ we set $\widetilde{x}=(x',-x_n)$. If $f$ is any function defined on $\mathbb{R}^n_+$, its even extension defined on $\mathbb{R}^n$ is
\begin{align}\label{f e}
f_e(x)=f(x),\ {\rm if }\ x\in \mathbb{R}^n_+;\ \ f_e(x)=f(\widetilde{x}),\ {\rm if }\ x\in \mathbb{R}^n_-.
\end{align}

\subsection{The Neumann Laplacian}

We denote by $\Delta_n$ the Laplacian on $\mathbb{R}^n$. Next we recall the Neumann Laplacian on $\mathbb{R}^n_+$ and $\mathbb{R}^n_-$.

Consider the Neumann problem on the half line $(0,\infty)$ (see \cite[(7), page 59 in Section 3.1]{S}):
\begin{align}\label{Neumann}
\left\{
\begin{array}{lcc}
 w_t-w_{xx}  =0 & {\rm for\ }  0<x<\infty, 0<t<\infty, \\
 w(x,0)=\phi(x), &  \\
 w_x(0,t)=0. &
\end{array}
\right.
\end{align}
Denote this corresponding Laplacian by $\Delta_{1,N_+}$. According to \cite[(7), Section 3.1]{S}, we see that
$$ w(x,t) = e^{-t\Delta_{1,N_+}}(\phi)(x). $$
 For $n>1$, we write $\mathbb{R}_+^n= \mathbb{R}^{n-1}\times \mathbb{R}_+$. And we define the Neumann Laplacian on $\mathbb{R}^n_+$ by
$$ \Delta_{n,N_+} = \Delta_{n-1} + \Delta_{1,N_+}, $$
where $\Delta_{n-1}$ is the  Laplacian on $\mathbb{R}^{n-1}$ and $\Delta_{1,N_+}$ is the Laplacian corresponding to \eqref{Neumann}.  Similarly we can define Neumann Laplacian  $\Delta_{n,N_-}$ on $\mathbb{R}^n_-$.




In the remainder of the paper, we skip the index $n$,   we denote by $\Delta$ the Laplacian on $\mathbb{R}^n$, denote the Neumann Laplacian on $\mathbb{R}^n_+$ by $\Delta_{N_+}$, and
Neumann Laplacian on $\mathbb{R}^n_-$ by~$\Delta_{N_-}$.


The Laplacian and Neumann Laplacian $\Delta_{N_{\pm}}$ are positive definite self-adjoint operators. By the spectral theorem one can define the semigroups generated by these operators $\{\exp(-t\Delta), t\geq 0\}$ and $\{\exp(-t\Delta_{N_\pm}), t\geq 0\}$. By $p_t(x,y)$, $p_{t,\Delta_{N_+}}(x,y)$ and $p_{t,\Delta_{N_-}}(x,y)$ we denote the heat kernels corresponding to the semigroups generated by $\Delta$, $\Delta_{N_+}$ and $\Delta_{N_-}$,
respectively.  Then we have
\begin{eqnarray*}
p_{t}(x,y) & = & \frac{1}{(4\pi t)^{\frac{n}{2}}}e^{-\frac{|x-y|^2}{4t}}.
\end{eqnarray*}
From the reflection method (see \cite[(9), page 60 in Section 3.1]{S}), we get
\begin{eqnarray*}
p_{t,\Delta_{N_+}}(x,y) & =& \frac{1}{(4\pi t)^{\frac{n}{2}}}e^{-\frac{|x'-y'|^2}{4t}}\left( e^{-\frac{|x_n-y_n|^2}{4t}}+e^{-\frac{|x_n+y_n|^2}{4t}} \right), \ \ x,y\in \mathbb{R}^n_+;\\
p_{t,\Delta_{N_-}}(x,y) & =& \frac{1}{(4\pi t)^{\frac{n}{2}}}e^{-\frac{|x'-y'|^2}{4t}}\left( e^{-\frac{|x_n-y_n|^2}{4t}}+e^{-\frac{|x_n+y_n|^2}{4t}} \right), \ \ x,y\in \mathbb{R}^n_-.
\end{eqnarray*}

For any function $f$ on $\mathbb{R}^n_+$, we have $$\exp(-t\Delta_{N_+})f(x)=\exp(-t\Delta)f_e(x)$$ for all $t\geq0$ and $x\in \mathbb{R}^n_+$. Similarly, for any function $f$ on $\mathbb{R}^n_-$, $$\exp(-t\Delta_{N_-})f(x)=\exp(-t\Delta)f_e(x)$$ for all $t\geq0$ and $x\in \mathbb{R}^n_-$.

Now let $\Delta_N$ be the uniquely determined unbounded operator acting on $L^2(\mathbb{R}^n)$ such that
\begin{align}\label{Delta N}
 (\Delta_Nf)_+=\Delta_{N_+}f_+ \ \ \ {\rm and}\ \ \ (\Delta_Nf)_-=\Delta_{N_-}f_-
\end{align}
for all $f: \mathbb{R}^n\rightarrow \mathbb{R}$ such that $f_+\in W^{1,2}(\mathbb{R}^n_+)$ and $f_-\in W^{1,2}(\mathbb{R}^n_-)$. Then
$\Delta_N$ is a positive self-adjoint operator and
$$\ (\exp(-t\Delta_N)f)_+=\exp(-t\Delta_{N_+})f_+\ \ \ {\rm and}\ \ \
(\exp(-t\Delta_N)f)_-=\exp(-t\Delta_{N_-})f_-.$$
The heat kernel of $\exp(-t\Delta_N)$, denoted by $p_{t,\Delta_N}(x,y)$, is then given as:
\begin{eqnarray}
\label{def-of-ptN}
p_{t,\Delta_N}(x,y)=\frac{1}{(4\pi t)^{\frac{n}{2}}}e^{-\frac{|x'-y'|^2}{4t}}\big( e^{-\frac{|x_n-y_n|^2}{4t}}+e^{-\frac{|x_n+y_n|^2}{4t}} \big)H(x_ny_n),
\end{eqnarray}
where $H: \mathbb{R}\rightarrow\{0,1\}$ is the Heaviside function given by
\begin{equation}
\label{e:hvyside}
H(t)=0,\ \ {\rm if}\ t<0;\ \ \ \ \ H(t)=1,\ \ {\rm if}\ t\geq0.
\end{equation}

Let us note that

\begin{itemize}
\item[$(\alpha)$] All the operators $\Delta, \Delta_{N_+}, \Delta_{N_-}, $ and  $\Delta_{N}$ are self-adjoint and they generate bounded analytic positive semigroups acting on all $L^p(\mathbb{R}^n)$ spaces for $1\leq p\leq \infty$;

\item[$(\beta)$] Suppose that $p_{t,L}(x,y)$ is the kernel corresponding to the semigroup generated by one of the operators $L$ listed in $(\alpha)$. Then the kernel $p_{t,L}(x,y)$ satisfies Gaussian bounds:
\begin{align}
\label{size of Neumann heat kernel}
 |p_{t,L}(x,y)|\leq \frac{C}{t^{\frac{n}{2}}} e^{-c\frac{|x-y|^2}{t}},
\end{align}
for all $x,y\in \Omega$, where $\Omega=\mathbb{R}^n$ for $\Delta, \Delta_{N}$; $\Omega=\mathbb{R}^n_+$ for $\Delta_{N_+}$ and $\Omega=\mathbb{R}^n_-$ for $\Delta_{N_-}$.
\end{itemize}

Next we consider the smoothness property of the heat kernel for $\Delta_N$, $\Delta_{N_+}$, and $\Delta_{N_-}$.

\begin{prop} Suppose that $L$ is one of the operators $ \Delta_{N_+}$, $ \Delta_{N_-}$ and $\Delta_{N}$. Then for $x,x',y\in\mathbb{R}^n_+$ $($or $\in\mathbb{R}^n_-$$)$ with
 $|x-x'|\leq \frac{1}{2}|x-y|$, we have
\begin{align}\label{smooth x}
|p_{t,L}(x,y)- p_{t,L}(x',y)| \leq C\frac{|x-x'|}{(\sqrt{t}+|x-y|)}\frac{\sqrt{t}}{(\sqrt{t}+|x-y|)^{n+1}};
\end{align}
symmetrically, for $x,y,y'\in\mathbb{R}^n_+$ $($or $\in\mathbb{R}^n_-$$)$ with
$|y-y'|\leq \frac{1}{2}|x-y|$, we have
\begin{align}\label{smooth y}
|p_{t,L}(x,y)- p_{t,L}(x,y')| \leq C\frac{|y-y'|}{(\sqrt{t}+|x-y|)}\frac{\sqrt{t}}{(\sqrt{t}+|x-y|)^{n+1}}.
\end{align}
\end{prop}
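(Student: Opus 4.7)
The plan is to reduce the claimed regularity estimates to the standard smoothness bound for the Gaussian heat kernel on $\mathbb{R}^n$, using the reflection identity already exhibited for $p_{t,L}$. First I would observe that when $x,x',y$ all lie in $\mathbb{R}^n_+$, the Heaviside factor $H(x_n y_n)$ in \eqref{def-of-ptN} equals $1$, so $p_{t,\Delta_N}$ agrees with $p_{t,\Delta_{N_+}}$ on this configuration; by the analogous identity on $\mathbb{R}^n_-$, it suffices to prove \eqref{smooth x} for $L=\Delta_{N_+}$. Writing $\widetilde{y}=(y',-y_n)$, the explicit formula displays the kernel as
$$
p_{t,\Delta_{N_+}}(x,y)=p_t(x,y)+p_t(x,\widetilde{y}),
$$
where $p_t(x,y)=(4\pi t)^{-n/2}e^{-|x-y|^2/(4t)}$ is the classical Gaussian heat kernel, so the problem reduces to a smoothness bound for $p_t$ in its first argument, applied at $y$ and at $\widetilde{y}$.

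Next I would use the identity $\nabla_z p_t(z,w)=-\frac{z-w}{2t}p_t(z,w)$ together with the elementary bound $s^k e^{-s^2/4}\le C_k$ for $s\ge 0$ to absorb the Gaussian decay into polynomial decay and obtain
$$
|\nabla_z p_t(z,w)|\le \frac{C\sqrt{t}}{(\sqrt{t}+|z-w|)^{n+2}}.
$$
A direct application of the mean value theorem, combined with the observation that the hypothesis $|x-x'|\le\frac{1}{2}|x-w|$ forces $|z-w|\ge\frac{1}{2}|x-w|$ for every $z$ on the segment $[x,x']$, then yields the target estimate for $p_t(\cdot,w)$ at $w=y$.

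For the reflected term $p_t(x,\widetilde{y})$ the same argument applies once I verify that the closeness hypothesis transfers to $\widetilde{y}$: since $x,y\in\mathbb{R}^n_+$ satisfy $x_n,y_n\ge 0$, one has $|x_n+y_n|\ge|x_n-y_n|$, hence $|x-\widetilde{y}|\ge|x-y|$, so $|x-x'|\le\frac{1}{2}|x-y|\le\frac{1}{2}|x-\widetilde{y}|$. The resulting bound on $|p_t(x,\widetilde{y})-p_t(x',\widetilde{y})|$ is controlled by $(\sqrt{t}+|x-\widetilde{y}|)^{-(n+2)}\le(\sqrt{t}+|x-y|)^{-(n+2)}$, matching the right-hand side of \eqref{smooth x}. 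The companion estimate \eqref{smooth y} then follows either by repeating the derivation in the $y$-variable (the gradient computation is identical) or, more efficiently, by invoking the symmetry $p_{t,\Delta_{N_+}}(x,y)=p_{t,\Delta_{N_+}}(y,x)$ together with \eqref{smooth x}.

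I do not foresee a genuine obstacle. The only point requiring care is the bookkeeping for the reflected Gaussian, namely checking that the proximity hypothesis $|x-x'|\le\frac{1}{2}|x-y|$ still gives a useful bound after $y$ is replaced by $\widetilde{y}$; this is handled by the inequality $|x-\widetilde{y}|\ge|x-y|$ available whenever $x$ and $y$ are in the same half-space. Thus the whole argument is essentially the classical Euclidean heat-kernel smoothness estimate applied twice.
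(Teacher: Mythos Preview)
Your proposal is correct and follows the same overall strategy as the paper: bound the gradient of the kernel pointwise by $C\sqrt{t}/(\sqrt{t}+|x-y|)^{n+2}$ and then apply the mean value theorem along the segment $[x,x']$. The only difference is in packaging. The paper differentiates the full Neumann kernel $p_{t,\Delta_{N_+}}$ directly, computes $\partial_{x_i}p_{t,\Delta_{N_+}}$ for $i<n$ and $i=n$ separately, and then assembles the bound on $|\nabla_x p_{t,\Delta_{N_+}}|^2$ by hand; you instead split $p_{t,\Delta_{N_+}}(x,y)=p_t(x,y)+p_t(x,\widetilde{y})$ at the outset and apply the classical Gaussian gradient estimate to each summand, invoking the clean inequality $|x-\widetilde{y}|\ge|x-y|$ to transfer both the hypothesis and the final bound back to $|x-y|$. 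Your organization is a bit tidier and makes the reflection structure explicit, while the paper's direct computation has the mild advantage of producing the gradient estimate on the Neumann kernel itself (rather than only on its two Gaussian pieces), which is occasionally convenient for later reuse.
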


\begin{proof}
Suppose $x,y\in\mathbb{R}^n_+$. Then
for $i=1,\ldots,n-1$, we have
\begin{align*}
\frac{\partial}{\partial x_i} p_{t,\Delta_{N_+}}(x,y) =  -\frac{(x_i-y_i)}{2t} \frac{1}{(4\pi t)^{\frac{n}{2}}}e^{-\frac{|x'-y'|^2}{4t}}\left( e^{-\frac{|x_n-y_n|^2}{4t}}+e^{-\frac{|x_n+y_n|^2}{4t}} \right).
\end{align*}
Moreover,
\begin{align*}
\frac{\partial}{\partial x_n} p_{t,\Delta_{N_+}}(x,y) =   -\frac{1}{(4\pi t)^{\frac{n}{2}}}e^{-\frac{|x'-y'|^2}{4t}}\left( e^{-\frac{|x_n-y_n|^2}{4t}} \frac{(x_n-y_n)}{2t}+e^{-\frac{|x_n+y_n|^2}{4t}} \frac{(x_n+y_n)}{2t}\right).
\end{align*}
Then we obtain that
\begin{align*}
\left |\nabla_x{  p_{t,\Delta_{N_+}}(x,y) }\right|^2&=
\sum_{i=1}^{n-1}\left |{\partial\over\partial x_i} p_{t,\Delta_{N_+}}(x,y)\right|^2 +  \left|{\partial\over\partial x_n} p_{t,\Delta_{N_+}}(x,y) \right|^2\\
&\leq \sum_{i=1}^{n-1} { (x_i-y_i)^2\over 4t^2 } {1\over {(4\pi t)^{n}}}e^{-{{|x'-y'|^2}\over {2t}}}  \left( e^{-{{|x_n-y_n|^2}\over
{4t}}}+   e^{-{{|x_n+y_n|^2}\over {4t}}} \right)^2\\
&\quad+ 2 {1\over {(4\pi t)^{n}}}e^{-{{|x'-y'|^2}\over {2t}}} \left( e^{-\frac{|x_n-y_n|^2}{4t}} \frac{(x_n-y_n)}{2t} \right)^2\\
&\quad+ 2 {1\over {(4\pi t)^{n}}}e^{-{{|x'-y'|^2}\over {2t}}} \left( e^{-\frac{|x_n+y_n|^2}{4t}} \frac{(x_n+y_n)}{2t}
\right)^2\\
&\leq C\sum_{i=1}^{n} { (x_i-y_i)^2\over t^2 } {1\over {(4\pi t)^{n}}}e^{-{{|x-y|^2}\over {2t}}}+2 {1\over {(4\pi t)^{n}}}e^{-{{|x'-y'|^2}\over {2t}}} \left( e^{-\frac{|x_n+y_n|^2}{4t}} \frac{(x_n+y_n)}{2t}
\right)^2\\
&\leq C { |x-y|^2\over t^2 } {1\over {(4\pi t)^{n}}}e^{-{{|x-y|^2}\over {2t}}} +2 {1\over {(4\pi t)^{n}}}e^{-{{|x-y|^2}\over {4t}}} \left( e^{-\frac{|x_n+y_n|^2}{8t}} \frac{(x_n+y_n)}{2t}
\right)^2\\
&\leq C {t\over (t+ |x-y|^2)^{n+2}}.
\end{align*}
Hence, it is easy to verify that
\begin{align*}
|\nabla_x p_{t,\Delta_{N_+}}(x,y)| \leq C\frac{\sqrt{t}}{(\sqrt{t}+|x-y|)^{n+2}}
\end{align*}
and similarly we can obtain that
\begin{align*}
|\nabla_y p_{t,\Delta_{N_+}}(x,y)| \leq C\frac{\sqrt{t}}{(\sqrt{t}+|x-y|)^{n+2}},
\end{align*}
which implies that
\begin{align*}
|p_{t,\Delta_{N_+}}(x,y)- p_{t,\Delta_{N_+}}(x',y)| \leq C\frac{|x-x'|}{(\sqrt{t}+|x-y|)}\frac{\sqrt{t}}{(\sqrt{t}+|x-y|)^{n+1}}
\end{align*}
for $x,x',y\in\mathbb{R}^n_+$ with $|x-x'|\leq \frac{1}{2}|x-y|$, and
\begin{align*}
|p_{t,\Delta_{N_+}}(x,y)- p_{t,\Delta_{N_+}}(x,y')| \leq C\frac{|y-y'|}{(\sqrt{t}+|x-y|)}\frac{\sqrt{t}}{(\sqrt{t}+|x-y|)^{n+1}}
\end{align*}
for $x,x',y\in\mathbb{R}^n_+$ with $|y-y'|\leq \frac{1}{2}|x-y|$.

We can obtain similar estimates for the heat semigroup of $ \Delta_{N_-}$ and $ \Delta_{N}$.
\end{proof}

\subsection{The Riesz Kernels Associated to the Neumann Laplacian}

A fundamental object in our study are the Riesz transforms associated to the Neumann Laplacian.  Recall that the Riesz transforms associated to the Neumann Laplacian are given by: $R_N= \nabla \Delta_N^{-\frac{1}{2}}$.
We collect the formula for these kernels in the following proposition.

\begin{prop}
\label{p:RieszKernel}
Denote 
by $R_{N,j}(x,y)$ the kernel of the $j$-th Riesz transform $\frac{\partial}{\partial x_j} \Delta_N^{-\frac{1}{2}}$ of $\Delta_N$.  Then for $1\leq j\leq n-1$ and for $x,y\in\mathbb{R}^n_+$ we have:
$$
R_{N,j}(x,y)= - C_n \bigg( {x_j-y_j\over |x-y|^{n+1}} +   \frac{x_j-y_j}{(|x'-y'|^2+|x_n+y_n|^2)^{\frac{n+1}{2}}}\bigg)
$$
and
$$
R_{N,n}(x,y)= - C_n \bigg( {x_j-y_j\over |x-y|^{n+1}} +   \frac{x_n+y_n}{(|x'-y'|^2+|x_n+y_n|^2)^{\frac{n+1}{2}}}\bigg),
$$
where $C_n=\frac{\Gamma\big(\frac{n+1}{2}\big)}{(\pi )^{\frac{n+1}{2}}}  $.

Similar expressions also hold for $R_{N,j}(x,y) $, $j=1,\ldots,n$, when $x,y\in\mathbb{R}^n_-$.
\end{prop}
\begin{proof}
Working from the definition of the square root of $\Delta_N$, i.e.,
$$ \Delta_N^{-{1\over2}}= {1\over \Gamma({1\over2})} \int_0^\infty e^{-t\Delta_N} {dt\over \sqrt{t}}, $$ 
we have that for $1\leq j\leq n-1$:
\begin{align*}
R_{N,j}(x,y)&= {1\over \Gamma({1\over2})}\frac{\partial}{\partial x_j}  \int_0^\infty  p_{t,\Delta_{N}}(x,y) \,\frac{dt}{\sqrt{t}}\\
&= {1\over \Gamma({1\over2})} \frac{\partial}{\partial x_j} \left( \int_0^\infty  \frac{1}{(4\pi t)^{\frac{n}{
2}}}e^{-\frac{|x-y|^2}{4t}} \, \frac{dt}{\sqrt{t}} +\int_0^\infty  \frac{1}{(4\pi t)^{\frac{n}{
2}}}e^{-\frac{|x'-y'|^2}{4t}}e^{-\frac{|x_n+y_n|^2}{4t}} \,  \frac{dt}{\sqrt{t}}  \right)\\
&=- \frac{\Gamma\Big(\frac{n+1}{2}\Big)}{(\pi )^{\frac{n+1}{2}}}  \bigg( \frac{x_j-y_j}{|x-y|^{n+1}} +  \frac{x_j-y_j}{(|x'-y'|^2+|x_n+y_n|^2)^{\frac{n+1}{2}}} \bigg).
\end{align*}

For $j=n$ and for $x,y\in\mathbb{R}^n_+$ we again observe:
\begin{align*}
R_{N,n}(x,y)&= {\sqrt{\pi}\over2}\frac{\partial}{\partial x_n}  \int_0^\infty  p_{t,\Delta_{N}}(x,y) \,\frac{dt}{\sqrt{t}}\\
&=-\frac{\Gamma\Big(\frac{n+1}{2}\Big)}{(\pi )^{\frac{n+1}{2}}}   \bigg( \frac{x_n-y_n}{|x-y|^{n+1}} +  \frac{x_n+y_n}{(|x'-y'|^2+|x_n+y_n|^2)^{\frac{n+1}{2}}} \bigg).
\end{align*}
\end{proof}

We next make the observation that kernels $R_{N,j}(x,y)$ are Calder\'on--Zygmund kernels.
\begin{prop}
Denote by $R_N(x,y)$ the kernel of the vector of Riesz transforms $\nabla \Delta_N^{-\frac{1}{2}}$.  Then:
\begin{align}\label{Riesz kernel}
R_N(x,y)=\big( R_{N,1}(x,y),\ldots, R_{N,n}(x,y) \big)H(x_ny_n),
\end{align}
with $H(t)$ the Heavyside function defined in \eqref{e:hvyside}.  Moreover, we have that
\begin{align*}
|R_{N}(x,y)|\leq C_n  \frac{1}{|x-y|^{n}},
\end{align*}
and
\begin{align*}
|R_{N}(x,y)-R_{N}(x_0,y)|+|R_{N}(y,x)-R_{N}(y,x_0)|\leq C\frac{|x-x_0|}{|x-y|^{n+1}}
\end{align*}
for $x,x_0,y\in\mathbb{R}^n_+$ $($or $x,x_0,y\in\mathbb{R}^n_-)$
  with $|x-x_0|  \leq \frac{1}{2} |x-y|$.
\end{prop}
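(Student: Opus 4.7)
The approach is to derive all three assertions directly from the explicit kernel formulas in Proposition \ref{p:RieszKernel} combined with the heat kernel structure \eqref{def-of-ptN}.

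For the factorization \eqref{Riesz kernel}, I would note that $p_{t,\Delta_N}(x,y)$ carries the Heaviside factor $H(x_ny_n)$, hence so does the subordinated kernel $\Delta_N^{-1/2}(x,y)=\pi^{-1/2}\int_0^\infty p_{t,\Delta_N}(x,y)\,t^{-1/2}\,dt$, and also its gradient in $x$ off the hyperplane $\{x_n=0\}$. When $x,y$ lie in the same half-space, Proposition \ref{p:RieszKernel} already provides the explicit component formulas; when they lie on opposite sides, the Heaviside factor makes $R_N(x,y)$ vanish, consistent with the claimed expression.

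For the size estimate I would use the decomposition $R_{N,j}(x,y)=R_j(x,y)+K_{N,j}(x,y)$. The classical Riesz kernel satisfies the standard bound $|R_j(x,y)|\le C|x-y|^{-n}$. For the perturbation, set $\widetilde y=(y',-y_n)$; for $x,y\in\mathbb{R}_+^n$, the identity $|x-\widetilde y|^2=|x'-y'|^2+(x_n+y_n)^2$ gives $|x-\widetilde y|\ge |x-y|$ because $x_n,y_n\ge 0$. Since the numerators $|x_j-y_j|$ and $(x_n+y_n)$ appearing in $K_{N,j}$ are both bounded by $|x-\widetilde y|$, I obtain $|K_{N,j}(x,y)|\le C|x-\widetilde y|^{-n}\le C|x-y|^{-n}$. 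Summing in $j$ yields the size bound, with the $\mathbb{R}_-^n$ case treated symmetrically.

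The smoothness estimate will follow from the mean value theorem applied along the segment joining $x_0$ to $x$, which stays in $\mathbb{R}_+^n$ by convexity. Given $|x-x_0|\le \tfrac12|x-y|$, any point $\xi$ on this segment satisfies $|\xi-y|\ge \tfrac12|x-y|$, and the reflection inequality above yields $|\xi-\widetilde y|\ge \tfrac12|x-y|$ as well. Differentiating $R_j(\xi,y)$ produces the classical bound $|\nabla_\xi R_j(\xi,y)|\le C|\xi-y|^{-n-1}\le C|x-y|^{-n-1}$, while differentiating the explicit rational expression for $K_{N,j}(\xi,y)$ yields terms of the form $|\xi-\widetilde y|^{-n-1}$, equally dominated by $C|x-y|^{-n-1}$. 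Multiplying by $|x-x_0|$ closes the estimate. The main obstacle is purely bookkeeping around the reflection $\widetilde y$, and everything rests on the single inequality $|\xi-\widetilde y|\ge |\xi-y|$ valid whenever $\xi$ and $y$ lie in the same half-space.
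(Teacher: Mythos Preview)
Your proposal is correct and follows essentially the same route as the paper: split $R_{N,j}=R_j+K_{N,j}$, invoke the classical Calder\'on--Zygmund bounds for $R_j$, and control $K_{N,j}$ and its $x$-gradient directly from the explicit rational formula. Your use of the reflection $\widetilde y=(y',-y_n)$ together with the inequality $|x-\widetilde y|\ge|x-y|$ (valid when $x,y$ lie in the same half-space) is a tidy way to package the estimates---it amounts to observing that $K_{N,j}(x,y)=R_j(x,\widetilde y)$---whereas the paper writes out each partial derivative $\partial_{x_i}K_{N,j}$ case by case; the underlying computation is the same.
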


\begin{proof}

We first claim that for $j=1,\ldots,n$, and $x,y\in\mathbb{R}^n_+$ (or $x,y\in\mathbb{R}^n_-$)
\begin{align*}
|R_{N,j}(x,y)|\leq C_n  \frac{1}{|x-y|^{n}}.
\end{align*}
In fact, from Proposition \ref{p:RieszKernel}, it is direct that for $1\leq j\leq n-1$,
$$ \frac{|x_j-y_j|}{(|x'-y'|^2+|x_n+y_n|^2)^{\frac{n+1}{2}}} \leq  \frac{|x_j-y_j|}{(|x'-y'|^2+|x_n-y_n|^2)^{\frac{n+1}{2}}} \leq  \frac{1}{|x-y|^{n}}$$
and for $j=n$,
$$ \frac{|x_n+y_n|}{(|x'-y'|^2+|x_n+y_n|^2)^{\frac{n+1}{2}}} \leq  \frac{1}{(|x'-y'|^2+|x_n+y_n|^2)^{\frac{n}{2}}} \leq  \frac{1}{|x-y|^{n}},$$
where we use the fact that $x,x_0,y\in\mathbb{R}^n_+$ (or $x,x_0,y\in\mathbb{R}^n_-$) and hence $x_j+y_j > |x_j-y_j|$ for $1\leq j\leq n$.

Similarly, by considering the estimates for the terms ${\partial\over \partial x_j}R_{N,j}(x,y)$ and ${\partial\over \partial y_j}R_{N,j}(x,y)$, 
we obtain that
\begin{align*}
|R_{N,j}(x,y)-R_{N,j}(x_0,y)|+|R_{N,j}(y,x)-R_{N,j}(y,x_0)|\leq C\frac{|x-x_0|}{|x-y|^{n+1}}
\end{align*}
for $x,x_0,y\in\mathbb{R}^n_+$ (or $x,x_0,y\in\mathbb{R}^n_-$).
  with $|x-x_0|  \leq \frac{1}{2} |x-y|$.
\end{proof}



\subsection{The Kernels of Fractional operators Associated to the Neumann Laplacian}
For $0<\alpha<n$, denote by $K(x,y)$ the kernel of the classical fractional operator $\Delta^{-\alpha/2}$, which is defined by
$$ \Delta^{-\alpha/2}f(x) = {1\over \Gamma(\alpha/2)} \int_0^\infty e^{-t\Delta}(f)(x) {dt\over t^{1-\alpha/2}}. $$
We know that
$$ K(x,y)={C_{n,\alpha}\over |x-y|^{n-\alpha}}, $$
where $C_{n,\alpha}= {\Gamma({n\over2}-{\alpha\over2}) \over \Gamma({\alpha\over2})}  {1\over \pi^{n\over2} 2^\alpha}$.  It is well known that when $b \in {\rm BMO}(\mathbb{R}^n)$, the commutator $[b,\Delta^{-\alpha/2}]$ is bounded from $L^p(\mathbb{R}^n)$ to $L^q(\mathbb{R}^n)$ for $1<p<n/\alpha$ and $1/q=1/p-\alpha/n$. See \cite{Chanillo}.

\begin{prop}
\label{p:Fractional Kernel}
Denote by $K_N(x,y)$ the kernel of the fractional operator $\Delta_N^{-\alpha/2}$. Then $x,y\in\mathbb{R}^n_+$ we have:
$$
K_{N}(x,y)= K(x,y)+\tilde{K}_{N}(x,y)
$$
with
$$
\tilde{K}_{N}(x,y):=
C_{n,\alpha}\frac{1}{(|x'-y'|^2+|x_n+y_n|^2)^{{\frac{n}{2}}-{\alpha\over2}}}. 
$$
  Similar expressions for $K_{N}(x,y) $ when $x,y\in\mathbb{R}^n_-$ also hold.
\end{prop}
\begin{proof}
For $x,y\in\mathbb{R}^n_+$, working from the fraction of the square root of $\Delta_N$ we have that:
\begin{align*}
&K_{N}(x,y)\\
&=  {1\over \Gamma(\alpha/2)}  \int_0^\infty  p_{t,\Delta_{N}}(x,y) \,{dt\over t^{1-\alpha/2}}\\
&={1\over \Gamma(\alpha/2)}  \int_0^\infty  \frac{1}{(4\pi t)^{\frac{n}{
2}}}e^{-\frac{|x-y|^2}{4t}} \, {dt\over t^{1-\alpha/2}} +{1\over \Gamma(\alpha/2)} \int_0^\infty  \frac{1}{(4\pi t)^{\frac{n}{
2}}}e^{-\frac{|x'-y'|^2}{4t}}e^{-\frac{|x_n+y_n|^2}{4t}} \,  {dt\over t^{1-\alpha/2}}  \\
&= C_{n,\alpha}  \Big(\frac{1}{|x-y|^{n-\alpha}}   +    \frac{1}{(|x'-y'|^2+|x_n+y_n|^2)^{{\frac{n}{2}}-{\alpha\over2}}}  \Big)\\
&= K(x,y) + \tilde{K}_N(x,y).
\end{align*}
where we set $$ \tilde{K}_{N}(x,y)= C_{n,\alpha}\frac{1}{(|x'-y'|^2+|x_n+y_n|^2)^{{\frac{n}{2}}-{\alpha\over2}}}.$$
\end{proof}

\section{Characterization and Properties of \texorpdfstring{$H^1_{\Delta_N}(\mathbb{R}^n)$}{Neumann H1} and \texorpdfstring{${\rm BMO}_{\Delta_N}(\mathbb{R}^n)$}{Neumann BMO}}
\setcounter{equation}{0}
\label{s:H1BMO}

\subsection{Fundamental Properties of \texorpdfstring{${\rm BMO}_{\Delta_N}(\mathbb{R}^n)$}{Neumann BMO}}

We now recall the definition and  some fundamental properties of ${\rm BMO}_{\Delta_N}(\mathbb{R}^n)$ from \cite{DDSY}.

Define $$ \mathcal{M}=\left\{ f\in L^1_{loc}(\mathbb{R}^n): \ \exists d>0\ s.t.\ \int_{\mathbb{R}^n} {|f(x)|^2\over 1+|x|^{n+d}} \,dx <\infty  \right\}. $$

\begin{definition}[\cite{DDSY}*{Definition 2.2}]
We say that $f\in\mathcal{M}$ is of bounded mean oscillation
associated with $\Delta_N$, abbreviated as ${\rm BMO}_{\Delta_N}(\mathbb{R}^n)$, if
\begin{align}\label{BMON norm}
 \|f\|_{{\rm BMO}_{\Delta_N}(\mathbb{R}^n)} = \sup_{B(y,r)} {1\over |B(y,r)|}\int_{B(y,r)}\left|f(x)-\exp(-r^2\Delta_N)f(x)\right|\,dx<\infty,
\end{align}
where the supremum is taken over all balls $B(y, r)$ in $\mathbb{R}^n$. The smallest bound for which \eqref{BMON norm} is satisfied is then taken to be the norm of f in this space, and is denoted by $ \|f\|_{{\rm BMO}_{\Delta_N}(\mathbb{R}^n)} $.
\end{definition}

\begin{definition}[\cite{DDSY}*{Definition 2.1}]
A function $f$ on $\mathbb{R}^n_+$ is said to be in ${\rm BMO}_r(\mathbb{R}^n_+)$ if there exists $F \in {\rm BMO}(\mathbb{R}^n)$ such that $F|_{\mathbb{R}^n_+} = f$. If $f \in {\rm BMO}_r(\mathbb{R}^n_+)$, then we set
$$  \|f\|_{{\rm BMO}_r(\mathbb{R}^n_+)}=\inf\left\{ \|F\|_{\rm BMO(\mathbb{R}^n)}:  F|_{\mathbb{R}^n_+} = f \right\}.  $$
\end{definition}

\begin{definition}[\cite{DDSY}*{Page 270}]
For any function $f\in L^1_{loc}(\mathbb{R}^n_+)$, define
$$ \|f\|_{{\rm BMO}_e(\mathbb{R}^n_+)}= \|f_e\|_{{\rm BMO}(\mathbb{R}^n)}, $$
where $f_e$ is defined in \eqref{f e}. We denote by ${\rm BMO}_e(\mathbb{R}^n_+)$ the corresponding Banach space.
\end{definition}

Similarly we can define the spaces ${\rm BMO}_r(\mathbb{R}^n_-)$ and ${\rm BMO}_e(\mathbb{R}^n_-)$.

\begin{prop}[\cite{DDSY}*{Proposition 3.1}]
The spaces ${\rm BMO}_r(\mathbb{R}^n_+)$ and ${\rm BMO}_e(\mathbb{R}^n_+)$ coincide, and their norms are equivalent. Similar result holds for ${\rm BMO}_r(\mathbb{R}^n_-)$ and ${\rm BMO}_e(\mathbb{R}^n_-)$.
\end{prop}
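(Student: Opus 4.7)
The plan is to prove the two inclusions with comparable norms separately. The inclusion $\mathrm{BMO}_e(\mathbb{R}^n_+) \subset \mathrm{BMO}_r(\mathbb{R}^n_+)$ is immediate: if $f \in \mathrm{BMO}_e(\mathbb{R}^n_+)$, then by definition $f_e \in \mathrm{BMO}(\mathbb{R}^n)$, and $f_e$ is itself an extension of $f$ to $\mathbb{R}^n$, whence $\|f\|_{\mathrm{BMO}_r(\mathbb{R}^n_+)} \leq \|f_e\|_{\mathrm{BMO}(\mathbb{R}^n)} = \|f\|_{\mathrm{BMO}_e(\mathbb{R}^n_+)}$ by the definition of $\|\cdot\|_{\mathrm{BMO}_r}$ as an infimum. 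For the reverse inclusion the plan is to fix an arbitrary extension $F \in \mathrm{BMO}(\mathbb{R}^n)$ of $f$, estimate the mean oscillation of $f_e$ over any ball $B = B(y,r)$ by $C\|F\|_{\mathrm{BMO}(\mathbb{R}^n)}$, and then pass to the infimum over $F$ to conclude $\|f_e\|_{\mathrm{BMO}(\mathbb{R}^n)} \leq C\|f\|_{\mathrm{BMO}_r(\mathbb{R}^n_+)}$.

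The oscillation estimate splits into three geometric cases according to the position of $B$ relative to the hyperplane $\{x_n = 0\}$. If $B \subset \overline{\mathbb{R}^n_+}$ then $f_e = F$ on $B$ and the bound is direct. If $B \subset \overline{\mathbb{R}^n_-}$, the evenness $f_e(x) = f_e(\tilde x)$ together with the change of variables $x \mapsto \tilde x$ transfers the oscillation to the reflected ball $\tilde B \subset \overline{\mathbb{R}^n_+}$, reducing to the previous case. The interesting case is when $B$ straddles the hyperplane, i.e.\ $|y_n| < r$. Here I would enlarge $B$ to the symmetric ball $B^* = B((y',0), 2r)$ centred on the hyperplane, which satisfies $B \subset B^*$ and $|B^*| = 2^n|B|$. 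Writing $B^*_\pm = B^* \cap \mathbb{R}^n_\pm$ and choosing the normalising constant $c = F_{B^*_+}$, the evenness of $f_e$ and a reflection in the $x_n$ variable give
$$ \int_B |f_e - c|\,dx \leq \int_{B^*} |f_e - c|\,dx = 2\int_{B^*_+} |F - F_{B^*_+}|\,dx, $$
and the standard fact that the mean oscillation of a $\mathrm{BMO}$ function over a bounded convex set of measure comparable to an enclosing ball is controlled by the $\mathrm{BMO}$ norm (applied with $B^*_+ \subset B^*$, $|B^*_+| = \tfrac12|B^*|$) bounds the right-hand side by $C|B^*_+|\|F\|_{\mathrm{BMO}(\mathbb{R}^n)}$. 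Dividing by $|B|$ yields the desired estimate.

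The main obstacle I anticipate is purely the bookkeeping in the straddling case: passing from an asymmetric ball $B$ to a symmetric enlargement $B^*$ containing both $B$ and its reflection $\tilde B$, reducing to the half-ball $B^*_+$ via the evenness of $f_e$, and then invoking the fact that the oscillation of $F$ over a half-ball is controlled by the global $\mathrm{BMO}$ norm. Once this last comparison is in place, the constants in the three cases combine to give the sought-after bound; the argument for $\mathbb{R}^n_-$ is identical after swapping the roles of the two half-spaces.
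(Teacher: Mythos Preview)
The paper does not actually prove this proposition; it is quoted verbatim from \cite{DDSY}*{Proposition 3.1} and no argument is supplied here, so there is no ``paper's own proof'' to compare against. That said, your proposal is a correct and standard proof of the result: the easy inclusion is immediate from the definitions, and for the reverse you handle the three geometric cases (ball in $\overline{\mathbb{R}^n_+}$, ball in $\overline{\mathbb{R}^n_-}$, ball straddling the hyperplane) in the expected way, with the straddling case reduced via symmetric enlargement and reflection to an oscillation estimate over the half-ball $B^*_+$, which is controlled because $|B^*_+|$ is comparable to $|B^*|$.
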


\begin{prop}[\cite{DDSY}*{Proposition 4.2}]
The Neumann {\rm BMO} space ${\rm BMO}_{\Delta_N}(\mathbb{R}^n)$ can be described in the following way:
$${\rm BMO}_{\Delta_N}(\mathbb{R}^n)=\left\{f\in\mathcal{M}: \ f_+\in {\rm BMO}_r(\mathbb{R}^n_+) {\rm\ and\ }  f_-\in {\rm BMO}_r(\mathbb{R}^n_-)  \right\}.$$
\end{prop}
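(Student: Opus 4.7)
The plan is to exploit the reflection identity from Section 2, which in particular gives that $\exp(-t\Delta_N)f(x)=\exp(-t\Delta)(f_+)_e(x)$ for $x\in\mathbb{R}^n_+$ and $\exp(-t\Delta_N)f(x)=\exp(-t\Delta)(f_-)_e(x)$ for $x\in\mathbb{R}^n_-$, together with the classical heat-semigroup characterization of ordinary BMO:
\begin{equation*}
\|g\|_{{\rm BMO}(\mathbb{R}^n)}\thickapprox \sup_{B(y,r)} \frac{1}{|B(y,r)|}\int_{B(y,r)} \left|g(x)-e^{-r^2\Delta}g(x)\right|dx.
\end{equation*}
By the preceding proposition, $f_\pm\in{\rm BMO}_r(\mathbb{R}^n_\pm)$ is equivalent to $(f_\pm)_e\in{\rm BMO}(\mathbb{R}^n)$, so the strategy reduces to comparing the Neumann oscillation of $f$ with the classical heat oscillations of $(f_+)_e$ and $(f_-)_e$ ball by ball.

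For the direction $(f_\pm)_e\in{\rm BMO}(\mathbb{R}^n)\Rightarrow f\in{\rm BMO}_{\Delta_N}(\mathbb{R}^n)$, I would fix an arbitrary ball $B=B(y,r)$ and split the Neumann oscillation as the sum of integrals over $B\cap\mathbb{R}^n_+$ and $B\cap\mathbb{R}^n_-$. The reflection identity converts the first integrand into $|(f_+)_e-e^{-r^2\Delta}(f_+)_e|$ and the second into the analogous expression for $(f_-)_e$. Enlarging each integration domain to all of $B$ and invoking the classical heat characterization displayed above then controls each piece by $\|(f_\pm)_e\|_{{\rm BMO}(\mathbb{R}^n)}$.

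For the converse, assuming $f\in{\rm BMO}_{\Delta_N}(\mathbb{R}^n)$, I would prove $(f_+)_e\in{\rm BMO}(\mathbb{R}^n)$ (the case of $(f_-)_e$ being symmetric) by bounding $\frac{1}{|B|}\int_B|(f_+)_e-e^{-r^2\Delta}(f_+)_e|\,dx$ on an arbitrary ball $B=B(y,r)$. Since $(f_+)_e$ and $e^{-r^2\Delta}(f_+)_e$ are both even in $x_n$ (the latter because the Euclidean heat kernel commutes with the reflection $x\mapsto\widetilde{x}$), I would distinguish three cases: (i) $B\subset\mathbb{R}^n_+$, where the reflection identity applies directly and the bound is $\|f\|_{{\rm BMO}_{\Delta_N}(\mathbb{R}^n)}$; (ii) $B\subset\mathbb{R}^n_-$, where the substitution $x\mapsto\widetilde{x}$ transfers the integral to $\widetilde{B}\subset\mathbb{R}^n_+$ and reduces to case (i); and (iii) $B$ straddles $\{x_n=0\}$, where I split $B=(B\cap\mathbb{R}^n_+)\cup(B\cap\mathbb{R}^n_-)$, treat the first piece as in case (i) after enlarging back to $B$, and handle the reflected piece $\widetilde{B\cap\mathbb{R}^n_-}\subset\widetilde{B}$ by applying the Neumann BMO estimate to the ball $\widetilde{B}$, which has radius $r$.

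The main bookkeeping step is the straddling case: one must verify that after reflection the scale $r^2$ in the heat semigroup still matches the radius of the ball on which the Neumann BMO hypothesis is invoked, so that the estimate $\frac{1}{|\widetilde{B}|}\int_{\widetilde{B}}|f-e^{-r^2\Delta_N}f|\,dx\leq\|f\|_{{\rm BMO}_{\Delta_N}(\mathbb{R}^n)}$ is directly available. The local integrability condition $f\in\mathcal{M}$ in either direction is automatic from the standard tail bound for classical BMO functions applied to $(f_\pm)_e$, and only needs a brief remark.
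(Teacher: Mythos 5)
The paper takes this as known, citing \cite{DDSY}*{Proposition 4.2}, and does not supply a proof of its own, so there is nothing internal to compare against. Your reflection argument is correct and is the natural way to establish it. The forward direction is immediate from the identity $e^{-r^2\Delta_N}f = e^{-r^2\Delta}(f_{\pm})_e$ on $\mathbb{R}^n_{\pm}$ together with the heat--semigroup characterization of classical ${\rm BMO}(\mathbb{R}^n)$. The converse is also handled correctly, and the only place that needs care is the straddling case: there you rightly observe that both $(f_+)_e$ and $e^{-r^2\Delta}(f_+)_e$ are even in $x_n$ (the latter because the Gaussian kernel commutes with $x\mapsto\widetilde{x}$), so the integral over $B\cap\mathbb{R}^n_-$ transfers to $\widetilde{B}\cap\mathbb{R}^n_+$, where the reflection identity again turns the integrand into $|f-e^{-r^2\Delta_N}f|$; since $\widetilde{B}$ is a genuine ball of the same radius $r$, the Neumann BMO estimate at the matching heat-time $r^2$ applies directly, with no rescaling mismatch. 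This is precisely the half-space reflection strategy the authors themselves run on the $H^1$ side when they prove the equivalent norm characterizations of $H^1_{\Delta_N}(\mathbb{R}^n)$, so your proof is fully consistent with the paper's methods. Your closing remark on the membership $f\in\mathcal{M}$ is also adequate: it is exactly the growth condition needed for $e^{-t\Delta_N}f$ and $e^{-t\Delta}(f_{\pm})_e$ to be well defined, and it passes to $(f_{\pm})_e$ by the standard logarithmic growth of BMO functions.
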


As a consequence of the results from \cite{DDSY} listed above, we obtain that $f\in {\rm BMO}_{\Delta_N}(\mathbb{R}^n)$ if and only if $f_{+,e}, f_{-,e}\in {\rm BMO}(\mathbb{R}^n)$.   A final key fact that plays a role in our analysis is the duality between ${\rm BMO}_{\Delta_N}(\mathbb{R}^n)$ and $H^1_{\Delta_N}(\mathbb{R}^n)$.
\begin{prop}[\cite{DDSY}*{Corollary 4.3}]
\label{p:duality}
The dual space of $H^1_{\Delta_N}(\mathbb{R}^n)$ is ${\rm BMO}_{\Delta_N}(\mathbb{R}^n)$.
\end{prop}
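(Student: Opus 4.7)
\medskip

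\noindent\textbf{Proof proposal.} The plan is to reduce the duality statement to the classical Fefferman--Stein theorem $H^{1}(\mathbb{R}^{n})^{\ast}={\rm BMO}(\mathbb{R}^{n})$ by exploiting the half--space decomposition. Theorem~\ref{t:theorem characterization} gives an isomorphism
\[
H^{1}_{\Delta_{N}}(\mathbb{R}^{n})\;\longrightarrow\;H^{1}_{e}(\mathbb{R}^{n})\oplus H^{1}_{e}(\mathbb{R}^{n}),\qquad f\longmapsto (f_{+,e},f_{-,e}),
\]
where $H^{1}_{e}(\mathbb{R}^{n})$ is the closed subspace of $H^{1}$ consisting of functions even in the $x_{n}$-variable; the results from \cite{DDSY} quoted above provide the parallel decomposition for ${\rm BMO}_{\Delta_{N}}(\mathbb{R}^{n})$ via $b\mapsto (b_{+,e},b_{-,e})\in {\rm BMO}(\mathbb{R}^{n})\oplus{\rm BMO}(\mathbb{R}^{n})$.

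For the easy inclusion ${\rm BMO}_{\Delta_{N}}\hookrightarrow (H^{1}_{\Delta_{N}})^{\ast}$, given $b\in{\rm BMO}_{\Delta_{N}}(\mathbb{R}^{n})$ I would define the pairing $\langle f,b\rangle=\int_{\mathbb{R}^{n}}f\,b\,dx$ on a dense subspace (say $H^{1}_{\Delta_{N}}\cap L^{2}$) and split it as
\[
\int_{\mathbb{R}^{n}_{+}}f_{+}b_{+}\,dx+\int_{\mathbb{R}^{n}_{-}}f_{-}b_{-}\,dx=\tfrac{1}{2}\!\int_{\mathbb{R}^{n}}f_{+,e}\,b_{+,e}\,dx+\tfrac{1}{2}\!\int_{\mathbb{R}^{n}}f_{-,e}\,b_{-,e}\,dx,
\]
using that the product of two functions even in $x_{n}$ is itself even. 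The classical Fefferman--Stein duality applied to each half then yields $|\langle f,b\rangle|\lesssim (\|f_{+,e}\|_{H^{1}}+\|f_{-,e}\|_{H^{1}})(\|b_{+,e}\|_{\rm BMO}+\|b_{-,e}\|_{\rm BMO})\approx \|f\|_{H^{1}_{\Delta_{N}}}\|b\|_{{\rm BMO}_{\Delta_{N}}}$, and the functional extends to all of $H^{1}_{\Delta_{N}}$ by density.

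For the reverse inclusion, given $\ell\in (H^{1}_{\Delta_{N}})^{\ast}$ I would use the support decomposition $f=f\chi_{\mathbb{R}^{n}_{+}}+f\chi_{\mathbb{R}^{n}_{-}}$, noting that by Theorem~\ref{t:theorem characterization} the maps $f\mapsto f\chi_{\mathbb{R}^{n}_{\pm}}$ are bounded idempotents on $H^{1}_{\Delta_{N}}$; this produces bounded pieces $\ell_{\pm}$. Restricting $\ell_{+}$ to the closed subspace $\{g\chi_{\mathbb{R}^{n}_{+}}:g\in H^{1}_{e}(\mathbb{R}^{n})\}\subset H^{1}_{\Delta_{N}}$, which is (up to the isomorphism above) isometric to $H^{1}_{e}(\mathbb{R}^{n})$, I obtain a bounded functional on $H^{1}_{e}$. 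Extending it by Hahn--Banach to $H^{1}(\mathbb{R}^{n})$ and invoking classical Fefferman--Stein gives a representing function $\widetilde b_{+}\in{\rm BMO}(\mathbb{R}^{n})$; its restriction to $\mathbb{R}^{n}_{+}$ lies in ${\rm BMO}_{r}(\mathbb{R}^{n}_{+})={\rm BMO}_{e}(\mathbb{R}^{n}_{+})$. Performing the analogous construction on $\mathbb{R}^{n}_{-}$ and gluing via the proposition that ${\rm BMO}_{\Delta_{N}}(\mathbb{R}^{n})=\{f:f_{+}\in{\rm BMO}_{r}(\mathbb{R}^{n}_{+}),\,f_{-}\in{\rm BMO}_{r}(\mathbb{R}^{n}_{-})\}$, I produce $b\in{\rm BMO}_{\Delta_{N}}(\mathbb{R}^{n})$ representing $\ell$.

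The main obstacle I anticipate is that the Hahn--Banach extension of $\ell_{+}$ from $H^{1}_{e}(\mathbb{R}^{n})$ to $H^{1}(\mathbb{R}^{n})$ is non-unique, so the representing $\widetilde b_{+}$ is only defined modulo the annihilator of $H^{1}_{e}$; one must verify that its \emph{restriction} to $\mathbb{R}^{n}_{+}$ (equivalently, its even part) is unambiguous and that the resulting identification $\ell\leftrightarrow b$ is a genuine norm-equivalent isomorphism rather than merely a quotient map. This is handled by symmetrizing the representative in the $x_{n}$-variable before restricting, and by checking directly that $\ell(f)=\int fb\,dx$ for $f$ supported in a single half-space, which by linearity extends to all of $H^{1}_{\Delta_{N}}$.
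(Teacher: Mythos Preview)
The paper does not prove this proposition at all: it is quoted verbatim from \cite{DDSY}*{Corollary~4.3}, so there is no ``paper's own proof'' to compare against. In \cite{DDSY} the result is obtained as an instance of the general $H^{1}_{L}$--${\rm BMO}_{L^{\ast}}$ duality of Duong and Yan \cite{DY2} (since $\Delta_{N}$ is self-adjoint), not through the half-space reduction you carry out.

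Your approach is nonetheless a legitimate alternative and the outline is essentially correct. A few remarks. First, you are invoking Theorem~\ref{t:theorem characterization} (specifically the norm equivalence $\|f\|_{H^{1}_{\Delta_{N}}}\approx\|f_{+,e}\|_{H^{1}}+\|f_{-,e}\|_{H^{1}}$) to prove the duality; in the paper's internal logic this is fine, since the proof of Theorem~\ref{theorem characterization} in Section~\ref{s:H1BMO} is carried out by direct computation with area and maximal functions and does not rely on Proposition~\ref{p:duality}. Second, your handling of the Hahn--Banach non-uniqueness is the right idea but slightly understated: once you have $\widetilde b_{+}\in{\rm BMO}(\mathbb{R}^{n})$ representing $\ell_{+}$ on $H^{1}_{e}$, the computation $\int_{\mathbb{R}^{n}}f_{+,e}\,\widetilde b_{+}\,dx=\int_{\mathbb{R}^{n}_{+}}f_{+}(x)\bigl(\widetilde b_{+}(x)+\widetilde b_{+}(\tilde x)\bigr)\,dx$ shows that the correct $b_{+}$ is the symmetrized function $\widetilde b_{+}(x)+\widetilde b_{+}(\tilde x)$ restricted to $\mathbb{R}^{n}_{+}$, and this is indeed well-defined (independent of the extension) and lies in ${\rm BMO}_{e}(\mathbb{R}^{n}_{+})$ because ${\rm BMO}(\mathbb{R}^{n})$ is preserved by the reflection $x\mapsto\tilde x$. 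With that made explicit, your argument goes through.
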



\subsection{Properties of \texorpdfstring{$H^1_{\Delta_N}(\mathbb{R}^n)$}{Neumann H1}}

In this subsection, we provide a deeper study of the space $H^1(\mathbb{R}^n)$.

We first provide several equivalent characterizations of $H_{\Delta_N}^1(\mathbb{R}^n)$.  To do so, we need the following definitions of the Hardy space associated to $\Delta_N$ in terms of the radial maximal function, the non-tangential maximal function, the Riesz transforms, and atoms.  As one might expect, these definitions all turn out to be equivalent as shown below in Theorem \ref{theorem characterization}.

\begin{definition}\label{def radial max H1}
We define $H^1_{\Delta_N,max}(\mathbb{R}^n)=\left\{ f\in L^1(\mathbb{R}^n):  f_{\Delta_N}^+ \in L^1(\mathbb{R}^n) \right\} $
with the norm $\|f\|_{H^1_{\Delta_N,max}(\mathbb{R}^n)} =\|f_{\Delta_N}^+\|_{L^1(\mathbb{R}^n)}$, where
$ f_{\Delta_N}^+(x) = \sup\limits_{t>0}\left |\exp(-t^2\Delta_N)f(x)\right|$.
\end{definition}
\begin{definition}\label{def non-tangential max H1}
We define $H^1_{\Delta_N,*}(\mathbb{R}^n)=\left\{ f\in L^1(\mathbb{R}^n):  f_{\Delta_N}^* \in L^1(\mathbb{R}^n) \right\} $
with the norm $\|f\|_{H^1_{\Delta_N,*}(\mathbb{R}^n)} =\|f_{\Delta_N}^*\|_{L^1(\mathbb{R}^n)}$, where
$ f_{\Delta_N}^*(x) = \sup\limits_{|x-y|<t}\left |\exp(-t^2\Delta_N)f(y)\right|$.
\end{definition}


\begin{definition}\label{def Riesz H1}
We define $$H^1_{\Delta_N,Riesz}(\mathbb{R}^n)=\left\{ f\in L^1(\mathbb{R}^n):  {\partial\over\partial x_l}\Delta_N^{-\frac{1}{2}} f \in L^1(\mathbb{R}^n)\  {\rm for \ }  1\leq l\leq n\right\} $$
with the norm $\|f\|_{H^1_{\Delta_N,Riesz}(\mathbb{R}^n)} =\left\Vert f\right\Vert_{L^1(\mathbb{R}^n)}+\sum_{l=1}^n \left\|\frac{\partial}{\partial x_l}\Delta_N^{-\frac{1}{2}} f \right\|_{L^1(\mathbb{R}^n)}$.
\end{definition}
Next we define the atoms for $H^1_{\Delta_N,max}(\mathbb{R}^n)$, which we adapt from a very recent result of Song and Yan \cite{SY}.
\begin{definition}\label{def atom}
Given $M\in \mathbb{N}$. We say that a function $a(x) \in L^\infty(\mathbb{R}^n)$ is an $H^1_{\Delta_N,max}(\mathbb{R}^n)$-atom, if there exist a function $b$ in the domain of $\Delta_N^M$ and a ball $B\subset \mathbb{R}^n$ such that
\begin{itemize}
\item[(i)] $a=\Delta_N^Mb$;
\item[(ii)] $\operatorname{supp}\,\Delta_N^kb \subset B$, $k=0,1,\ldots, M$;
\item[(iii)] $\| (r_B^2\Delta_N)^kb \|_{L^\infty(\mathbb{R}^n)} \leq r_B^{2M} |B|^{-1}$,  $k=0,1,\ldots, M$.
\end{itemize}
\end{definition}
\begin{definition}\label{def atom H1}
We say that $f=\sum_j\lambda_ja_j$ is an atomic representation of $f$ if $\{\lambda_j\}\in \ell^1$, each $a_j$ is an $H^1_{\Delta_N,max}(\mathbb{R}^n)$ atom, and the sum converges in $L^2(\mathbb{R}^n)$. Set
$$ \widetilde{H}^1_{\Delta_N,atom}(\mathbb{R}^n)=\left\{ f\in L^2(\mathbb{R}^n): \ f\ {\rm has\ an\ atomic\ representation} \right\} $$
with the norm $\|f\|_{\widetilde{H}^1_{\Delta_N,atom}(\mathbb{R}^n)}$ given by
$$ \inf\bigg\{ \sum_j|\lambda_j|:\ f=\sum_j\lambda_ja_j \ {\rm is\ an\ atomic\ representation} \bigg\}.  $$
The space $H^1_{\Delta_N,atom}(\mathbb{R}^n)$ is defined as the completion of $\widetilde{H}^1_{\Delta_N,atom}(\mathbb{R}^n)$ with respect to this norm.
\end{definition}

We now collection the equivalence of all these definitions and moreover provide a link between $H^1(\mathbb{R}^n)$ and $H^1_{\Delta_N}(\mathbb{R}^n)$.
\begin{theorem}\label{theorem characterization}
Let all the notation be as above. Then,
$$ H^1_{\Delta_N}(\mathbb{R}^n) =
H^1_{\Delta_N,max}(\mathbb{R}^n)=H^1_{\Delta_N,*}(\mathbb{R}^n)=H^1_{\Delta_N,Riesz}(\mathbb{R}^n) =H^1_{\Delta_N,atom}(\mathbb{R}^n)
$$
and they have equivalent norms
\begin{eqnarray*}
\|f\|_{H^1_{\Delta_N}(\mathbb{R}^n)}&\thickapprox&
\|f\|_{H^1_{\Delta_N,max}(\mathbb{R}^n)}\thickapprox \|f\|_{H^1_{\Delta_N,Riesz}(\mathbb{R}^n)}
\thickapprox\|f\|_{H^1_{\Delta_N,atom}(\mathbb{R}^n)}\\
&\thickapprox&\|f_{+,e}\|_{H^1(\mathbb{R}^n)}+\|f_{-,e}\|_{H^1(\mathbb{R}^n)}.
\end{eqnarray*}
Namely, $f\in H^1_{\Delta_N}(\mathbb{R}^n)$ if and only if
$f_{+,e}\in H^1(\mathbb{R}^n)$ and $f_{-,e}\in H^1(\mathbb{R}^n)$.
\end{theorem}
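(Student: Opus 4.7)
My plan is to prove every one of the five norms equivalent to the \emph{hub} quantity $\|f_{+,e}\|_{H^1(\mathbb{R}^n)} + \|f_{-,e}\|_{H^1(\mathbb{R}^n)}$, which closes the chain via the classical Fefferman-Stein theory. The structural tool that makes everything go is the identity from \eqref{Delta N},
\[
\bigl(\exp(-t\Delta_N)f\bigr)(x) \;=\; \bigl(\exp(-t\Delta)f_{\pm,e}\bigr)(x)\qquad\text{for } x\in\mathbb{R}^n_\pm,
\]
together with the fact that the Neumann heat kernel is supported on $\{x_ny_n\geq 0\}$. This identity passes immediately to $Q_{t^2}^{\Delta_N}=t^2\Delta_N e^{-t^2\Delta_N}$ and, by Proposition~\ref{p:RieszKernel}, to the Riesz kernels, because $R_{N,j}(x,y)=R_j(x,y)+K_{N,j}(x,y)$ on $\mathbb{R}^n_+\times\mathbb{R}^n_+$ where $K_{N,j}(x,y)$ coincides (up to a constant) with $R_j(x,\widetilde y)$.

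For the hub equivalence via the defining area function, I would split the tent $\{|y-x|<t\}$ at $x\in\mathbb{R}^n_+$ by the sign of $y_n$. For the upper half $y\in\mathbb{R}^n_+$ one substitutes $Q_{t^2}^{\Delta_N}f(y)=Q_{t^2}^{\Delta}f_{+,e}(y)$; for the lower half $y\in\mathbb{R}^n_-$ one has $Q_{t^2}^{\Delta_N}f(y)=Q_{t^2}^{\Delta}f_{-,e}(y)$, and then the reflection $y\mapsto\widetilde y$ turns this integral into the upper half of the area integral of $f_{-,e}$ centered at $\widetilde x\in\mathbb{R}^n_-$. This yields
$S_{\Delta_N}(f)(x)\lesssim S_{\Delta}(f_{+,e})(x)+S_{\Delta}(f_{-,e})(\widetilde x)$, and after integrating in $x$ and using evenness in $x_n$ of each $S_{\Delta}(f_{\pm,e})$ one obtains $\|S_{\Delta_N}f\|_{L^1}\lesssim \|S_{\Delta}f_{+,e}\|_{L^1}+\|S_{\Delta}f_{-,e}\|_{L^1}$. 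The reverse inequality is symmetric: decompose $S_{\Delta}(f_{+,e})(x)^2$ by sign of $y_n$, use evenness of $Q_{t^2}^{\Delta}f_{+,e}$ to identify the lower-half integral with $S_{\Delta_N}(f)(\widetilde x)^2$, and integrate. Invoking the classical Fefferman-Stein equivalence $\|g\|_{H^1(\mathbb{R}^n)}\approx\|S_\Delta g\|_{L^1}$ for $g=f_{\pm,e}$ gives $\|f\|_{H^1_{\Delta_N}}\approx\|f_{+,e}\|_{H^1}+\|f_{-,e}\|_{H^1}$.

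The remaining characterizations follow from the hub. For the radial maximal function, the pointwise identity $f^+_{\Delta_N}(x)=(f_{\pm,e})^+_{\Delta}(x)$ on $\mathbb{R}^n_\pm$ and evenness of $(f_{\pm,e})^+_{\Delta}$ give $\|f\|_{H^1_{\Delta_N,max}}\approx\|f_{+,e}\|_{H^1}+\|f_{-,e}\|_{H^1}$. For the non-tangential variant, when the cone at $x\in\mathbb{R}^n_+$ dips into $\mathbb{R}^n_-$, reflecting $y\mapsto\widetilde y$ lands one in the cone over $\widetilde x$ (with aperture at most $3$, harmless by the classical equivalence of apertures), producing $(f_{+,e})^*_\Delta(x)\lesssim f^*_{\Delta_N}(x)+f^*_{\Delta_N}(\widetilde x)$ and its reverse. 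For Riesz transforms, Proposition~\ref{p:RieszKernel} combined with a change of variables $y\mapsto\widetilde y$ shows the exact identity $R_{N,j}f(x)=R_j f_{\pm,e}(x)$ for $x\in\mathbb{R}^n_\pm$; since $|R_j f_{\pm,e}|$ is even in $x_n$ (regardless of whether $R_j f_{\pm,e}$ itself is even or odd), the $L^1$ norms match the hub. For the atomic characterization, I would invoke the general result of Song-Yan \cite{SY}: because $\Delta_N$ is a nonnegative self-adjoint operator whose heat kernel satisfies the Gaussian bound \eqref{size of Neumann heat kernel} and the H\"older estimates \eqref{smooth x}-\eqref{smooth y}, the atomic decomposition identifying $H^1_{\Delta_N}$ with $H^1_{\Delta_N,atom}$ is in force.

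The main obstacle is the area-function step, since the tent over a point near the hyperplane $\{x_n=0\}$ genuinely straddles both half-spaces; the content of the argument is precisely the bookkeeping that converts the ``wrong side'' of the cone over $x$ into the ``right side'' of the cone over $\widetilde x$ via the reflection symmetry of $f_{\pm,e}$ and the invariance of the classical heat semigroup under that reflection. Once this technical core is in place, the other four equivalences reduce to combining the half-space identities with standard evenness/oddness considerations and the well-known classical $H^1$ characterizations.
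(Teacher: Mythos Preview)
Your proposal is correct and follows essentially the same strategy as the paper: both arguments route every characterization through the ``hub'' $\|f_{+,e}\|_{H^1(\mathbb{R}^n)}+\|f_{-,e}\|_{H^1(\mathbb{R}^n)}$ by exploiting the identity $\exp(-t\Delta_N)f=\exp(-t\Delta)f_{\pm,e}$ on $\mathbb{R}^n_\pm$, split the relevant integrals or suprema by the sign of $y_n$, use the reflection $y\mapsto\widetilde y$ together with the evenness of $f_{\pm,e}$, and cite Song--Yan for the atomic equivalence. Your reflection bookkeeping is in fact slightly tidier than the paper's (which writes some pointwise equalities that really only hold after integration in $x$), and your aperture-$3$ remark in the non-tangential step is harmless but unnecessary: since $|\widetilde x-\widetilde y|=|x-y|$, the reflected point stays in the aperture-$1$ cone over $\widetilde x$.
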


\begin{proof}

We recall that the Hardy space associated with $\Delta_N$ is defined as the set of functions  $\left\{f\in L^1(\mathbb{R}^n):
\|S_{\Delta_N}(f)\|_{L^1(\mathbb{R}^n)}<\infty\right\}$ in the norm
of $\|f\|_{H^1_{\Delta_N}}=\|S_{\Delta_N}(f)\|_{L^1(\mathbb{R}^n)}$, where $
S_{\Delta_N}(f)(x)= \Big( \int_0^{\infty}\!\!\!\int_{|y-x|<t}
 |Q_{t^2}f(y)|^2\,   \frac{dydt}{t^{n+1}}  \Big)^{\frac{1}{2}}
$, and  $Q_{t^2}=t^2\Delta_N\exp(-t^2\Delta_N)$.

We now consider the operator
$Q_{t}=t\Delta_N\exp(-t\Delta_N) = -t {d\over dt} \exp(-t\Delta_N)$  for any $t> 0$ (see \cite{DY2}*{(3.5) in Section 3.1}). 
%
Then we have
$$Q_{t^2}f(x)=t^2\Delta_N\exp(-t^2\Delta_N)f(x)=\int_{\mathbb{R}^n}-\frac{t}{2} \frac{\partial}{\partial t}
p_{t^2,\Delta_N}(x,y)f(y)\,dy.$$ From the definition of
$p_{t,\Delta_N}(x,y)$, see \eqref{def-of-ptN}, we have that for
any $x\in \mathbb{R}^n_+$,
\begin{eqnarray*}
t^2\Delta_N\exp(-t^2\Delta_N)f(x)&=&\int_{\mathbb{R}^n_+}-\frac{t}{2} \frac{\partial}{\partial t} p_{t^2,\Delta_N}(x,y)f_+(y)\,dy\\
&=&\int_{\mathbb{R}^n}-\frac{t}{2} \frac{\partial}{\partial t}
p_{t^2}(x,y)f_{+,e}(y)\,dy\\
&=&t^2\Delta\exp(-t^2\Delta)f_{+,e}(x).
\end{eqnarray*}
Similarly, for any $x\in \mathbb{R}^n_-$, we have $
t^2\Delta_N\exp(-t^2\Delta_N)f(x)=t^2\Delta\exp(-t^2\Delta)f_{-,e}(x).
$

Moreover, by a change of variable,
\begin{eqnarray}\label{e4.4 qt reflection}
&&t^2\Delta_N\exp(-t^2\Delta_N)f(x)=-t^2\Delta\exp(-t^2\Delta)f_{+,e}(\widetilde{x})\
\ {\rm for\ any\ }t>0, \ x\in \mathbb{R}^n_+;\\
&&t^2\Delta_N\exp(-t^2\Delta_N)f(x)=-t^2\Delta\exp(-t^2\Delta)f_{-,e}(\widetilde{x})\
\ {\rm for\ any\ }t>0, \ x\in \mathbb{R}^n_-.\nonumber
\end{eqnarray}
Then from \eqref{e4.4 qt reflection} we have
\begin{align*}
S_{\Delta_N}(f)(x)^2
&=\int_0^\infty \int_{ |x-y|<t, y\in\mathbb{R}_+^n }  |t^2\Delta_N\exp(-t^2\Delta_N)f(y)|^2 \,\frac{dydt}{t^n}\\
  &\hskip.5cm+
   \int_0^\infty \int_{ |x-y|<t, y\in\mathbb{R}_-^n }  |t^2\Delta_N\exp(-t^2\Delta_N)f(y)|^2 \,\frac{dydt}{t^n} \\
&=\int_0^\infty \int_{ |x-y|<t, y\in\mathbb{R}_+^n }  |t^2\Delta\exp(-t^2\Delta)f_{+,e}(y)|^2\, \frac{dydt}{t^n}\\
  &\hskip.5cm+
   \int_0^\infty \int_{ |x-y|<t, y\in\mathbb{R}_-^n }  |t^2\Delta\exp(-t^2\Delta)f_{-,e}(y)|^2\, \frac{dydt}{t^n}\\
&=\frac{1}{2}\bigg(\int_0^\infty \int_{ |x-y|<t }  |t^2\Delta\exp(-t^2\Delta)f_{+,e}(y)|^2\, \frac{dydt}{t^n}\\
  &\hskip.5cm+
   \int_0^\infty \int_{ |x-y|<t }  |t^2\Delta\exp(-t^2\Delta)f_{-,e}(y)|^2\, \frac{dydt}{t^n}\bigg),
\end{align*}
which implies that $ S_{\Delta_N}(f)(x)\leq \frac{\sqrt{2}}{2} \Big( S(f_{+,e})(x)+S(f_{-,e})(x) \Big) $. Conversely,
\begin{align*}
S(f_{+,e})(x)^2
&=\int_0^\infty \int_{ |x-y|<t }  |t^2\Delta\exp(-t^2\Delta)f_{+,e}(y)|^2\, \frac{dydt}{t^n}\\
&=2 \int_0^\infty \int_{ |x-y|<t, y\in \mathbb{R}^n_+ }  |t^2\Delta\exp(-t^2\Delta)f_{+,e}(y)|^2 \frac{dydt}{t^n}\\
&\leq 2 S_{\Delta_N}(f)(x)^2.
\end{align*}
Similarly we have $S(f_{-,e})(x)^2\leq 2 S_{\Delta_N}(f)(x)^2$.  Hence, we obtain that $ S(f_{+,e})(x)+S(f_{-,e})(x) \leq   2\sqrt{2}S_{\Delta_N}(f)(x) $. As a consequence, we have
\begin{align}\label{e4.5}
\|f\|_{H^1_{\Delta_N}(\mathbb{R}^n)}&=\int_{\mathbb{R}^n}  |S_{\Delta_N}(f)(x)|\,dx\\
&\approx \int_{\mathbb{R}^n}  |S(f_{+,e})(x)|\,dx+ \int_{\mathbb{R}^n}  |S(f_{-,e})(x)|\,dx\nonumber\\
 &= \|f_{+,e}\|_{H^1(\mathbb{R}^n)}+\|f_{-,e}\|_{H^1(\mathbb{R}^n)}. \nonumber
\end{align}

%

%

Next we turn to $H^1_{\Delta_D,max}(\mathbb{R}^n)$.  From \eqref{def-of-ptN} we can see that for any $t\geq0$ and $x\in \mathbb{R}^n_+$,
\begin{align*}
\exp(-t^2\Delta_N)f(x)&=\int_{\mathbb{R}^n}p_{t^2,\Delta_N}(x,y)f(y)\,dy=
\int_{\mathbb{R}^n_+}p_{t^2,\Delta_N}(x,y)f_+(y)\,dy\\
&=
\int_{\mathbb{R}^n}p_{t^2}(x,y)f_{+,e}(y)\,dy=\exp(-t^2\Delta)f_{+,e}(x).
\end{align*}
Similarly, $\exp(-t^2\Delta_N)f(x)=\exp(-t^2\Delta)f_{-,e}(x)$
for any $t\geq0$ and $x\in \mathbb{R}^n_-$. Thus,
\begin{eqnarray}\label{e4.1}
&&\sup_{t>0}|\exp(-t^2\Delta_N)f(x)|=\sup_{t>0}|\exp(-t^2\Delta)f_{+,e}(x)|\
\ {\rm for\ any\ }\ x\in\mathbb{R}^n_+;\\
&&\sup_{t>0}|\exp(-t^2\Delta_N)f(x)|=\sup_{t>0}|\exp(-t^2\Delta)f_{-,e}(x)|\
\ {\rm for\ any\ }\ x\in\mathbb{R}^n_-.\nonumber
\end{eqnarray}
Again, by a change of variable, we have that
\begin{eqnarray}\label{e4.2}
&&\exp(-t^2\Delta_N)f(x)=-\exp(-t^2\Delta)f_{+,e}(\widetilde{x})\
\ {\rm for\ any\ }t>0, \ x\in \mathbb{R}^n_+;\\
&&\exp(-t^2\Delta_N)f(x)=-\exp(-t^2\Delta)f_{-,e}(\widetilde{x})\
\ {\rm for\ any\ }t>0, \ x\in \mathbb{R}^n_-.\nonumber
\end{eqnarray}
Then, for any $f\in H^1_{\Delta_N, max}(\mathbb{R}^n)$, from
\eqref{e4.1} and \eqref{e4.2} we can obtain that
\begin{eqnarray}\label{e4.3}
&&\|f\|_{H^1_{\Delta_N,
max}(\mathbb{R}^n)}=\int_{\mathbb{R}^n_+}|f^+_{\Delta_N}(x)|\,dx+\int_{\mathbb{R}^n_-}|f^+_{\Delta_N}(x)|\,dx\\
&&=\int_{\mathbb{R}^n_+}\sup_{t>0}|\exp(-t^2\Delta_N)f(x)|\,dx+
\int_{\mathbb{R}^n_-}\sup_{t>0}|\exp(-t^2\Delta_N)f(x)|dx\nonumber\\
&&=\int_{\mathbb{R}^n_+}\sup_{t>0}|\exp(-t^2\Delta)f_{+,e}(x)|\,dx+
\int_{\mathbb{R}^n_-}\sup_{t>0}|\exp(-t^2\Delta)f_{-,e}(x)|\,dx\nonumber\\
&&=\frac{1}{2}\Big(\int_{\mathbb{R}^n}\sup_{t>0}|\exp(-t^2\Delta)f_{+,e}(x)|\,dx+
\int_{\mathbb{R}^n}\sup_{t>0}|\exp(-t^2\Delta)f_{-,e}(x)|\,dx\Big)\nonumber\\
&&=\frac{1}{2}\Big(\|(f_{+,e})^+\|_{L^1(\mathbb{R}^n)}+\|(f_{-,e})^+\|_{L^1(\mathbb{R}^n)}\Big)\nonumber\\
&&=\frac{1}{2}\Big(\|f_{+,e}\|_{H^1(\mathbb{R}^n)}+\|f_{-,e}\|_{H^1(\mathbb{R}^n)}\Big),\nonumber
\end{eqnarray}
where $f^+(x)=\sup\limits_{t>0}|p_{t^2}*f(x)|$ is the classical
maximal function as defined in (3) in \S2.4. Thus \eqref{e4.3}
yields that $f\in H^1_{\Delta_N, max}(\mathbb{R}^n)$ if and only
if $f_{+,e}\in H^1(\mathbb{R}^n)$ and $f_{-,e}\in
H^1(\mathbb{R}^n)$.

We now consider the Hardy space $H^1_{\Delta_N,
*}(\mathbb{R}^n)$ via the non-tangential maximal
function. Note that
\begin{eqnarray*}
f_{\Delta_N}^*(x)&=&\sup_{|x-y|<t}|\exp(-t^2\Delta_N)f(y)|\\
&\leq& \sup_{|x-y|<t, y\in \mathbb{R}^n_+}
|\exp(-t^2\Delta_N)f(y)|+\sup_{|x-y|<t, y\in \mathbb{R}^n_-}
|\exp(-t^2\Delta_N)f(y)|\\
&\leq& \sup_{|x-y|<t, y\in \mathbb{R}^n_+}
|\exp(-t^2\Delta)f_{+,e}(y)|+\sup_{|x-y|<t, y\in \mathbb{R}^n_-}
|\exp(-t^2\Delta)f_{-,e}(y)|\\
&\leq& \sup_{|x-y|<t} |\exp(-t^2\Delta)f_{+,e}(y)|+\sup_{|x-y|<t}
|\exp(-t^2\Delta)f_{-,e}(y)|\\
 &=& (f_{+,e})^*(x)+(f_{-,e})^*(x),
\end{eqnarray*}
where $f^*(x)=\sup\limits_{|x-y|<t}|p_{t^2}*f(y)|$ is the
classical non-tangential maximal function. Hence $\|f_{\Delta_N}^*(x)\|_{L^1(\mathbb{R}^n)}\leq
\|(f_{+,e})^*\|_{L^1(\mathbb{R}^n)}+\|(f_{-,e})^*\|_{L^1(\mathbb{R}^n)}.$ Moreover, we
have
\begin{eqnarray*}
(f_{+,e})^*(x)&=&\sup_{|x-y|<t}|\exp(-t^2\Delta)f_{+,e}(y)|\\
&\leq &\sup_{|x-y|<t, y\in \mathbb{R}^n_+}
|\exp(-t^2\Delta)f_{+,e}(y)|+\sup_{|x-y|<t, y\in \mathbb{R}^n_-}
|\exp(-t^2\Delta)f_{+,e}(y)|\\
&\leq & 2\sup_{|x-y|<t}|\exp(-t^2\Delta_N)f(y)|\\
&\leq& 2f_{\Delta_N}^*(x).
\end{eqnarray*}
Thus, $\|(f_{+,e})^*\|_{L^1(\mathbb{R}^n)}\leq
2\|f_{\Delta_N}^*\|_{L^1(\mathbb{R}^n)}$. Similarly we obtain 
$\|(f_{-,e})^*\|_1\leq
2\|f_{\Delta_N}^*\|_{L^1(\mathbb{R}^n)}$. This implies that
\begin{eqnarray}\label{e4.4}
\|f_{\Delta_N}^*\|_1\thickapprox
\|(f_{+,e})^*\|_{L^1(\mathbb{R}^n)}+\|(f_{-,e})^*\|_{L^1(\mathbb{R}^n)}.
\end{eqnarray}
Thus, \eqref{e4.4} yields that $f\in H^1_{\Delta_N,*}(\mathbb{R}^n)$ if and only if $f_{+,e}\in H^1(\mathbb{R}^n)$ and $f_{-,e}\in H^1(\mathbb{R}^n)$.

As for the Riesz transform characterization of the Hardy space $H^1_{\Delta_N}(\mathbb{R}^n)$, it suffices to note that when $x\in\mathbb{R}^n_+$,
\begin{align*}
  \nabla\Delta_N^{-\frac{1}{2}} f(x) &= \int_{\mathbb{R}^n} K_N(x,y) f(y)\,dy = \int_{\mathbb{R}^n_+} R_N(x,y) f_+(y)\,dy
  =    \int_{\mathbb{R}^n} R(x,y) f_{+,e}(y)\,dy\\
  &=  \nabla\Delta^{-\frac{1}{2}} f_{+,e}(x)
\end{align*}
and that
when $x\in\mathbb{R}^n_-$,
\begin{align*}
  \nabla\Delta_N^{-\frac{1}{2}} f(x) =  \nabla\Delta^{-\frac{1}{2}} f_{-,e}(x).
\end{align*}
Thus,  $f\in H^1_{\Delta_N,Riesz}(\mathbb{R}^n)$ if and only if $f_{+,e}\in H^1(\mathbb{R}^n)$ and $f_{-,e}\in H^1(\mathbb{R}^n)$.

Finally, for the atomic decomposition, in the recent paper of Song and Yan \cite{SY}, they already obtained that $ H^1_{\Delta_N,*}(\mathbb{R}^n)=H^1_{\Delta_N,atom}(\mathbb{R}^n)
$. See \cite{SY}*{Theorem 1.4} for this fact.
\end{proof}

We now prove the Fefferman--Stein type representation for the space ${\rm BMO}_{\Delta_N}(\mathbb{R}^n)$.

\begin{proof}[Proof of Corollary \ref{c:FS}]
The proof is as in \cite{FS}.  Let $B=\bigoplus_{j=0}^n L^1(\mathbb{R}^n)$ and norm $B$ by $\sum_{j=0}^{n}\left\Vert f_j\right\Vert_{L^1(\mathbb{R}^n)}$.  We have that $B^*=\bigoplus_{j=0}^n L^\infty(\mathbb{R}^n)$.  Let $S$ be the subspace of $B$ given by
$$
S=\left\{(f, R_{N,1}f,\ldots,R_{N,n}f):f\in L^1(\mathbb{R}^n)\right\}.
$$
We have that $S$ is a closed subspace and that $f\to (f, R_{N,1}f,\ldots,R_{N,n}f)$ is a isometry of $H^1_{\Delta_{N}}(\mathbb{R}^n)$ to $S$.  Linear functionals on $S$ and $H^1_{\Delta_N}(\mathbb{R}^n)$ can be identified in an obvious way, hence any continuous linear functional on $H^1_{\Delta_N}(\mathbb{R}^n)$ can be extended by Hahn-Banach to a continuous linear functional on $B$ and can be identified with a vector of functions $(b_0,b_1,\ldots, b_n)$ with each $b_j\in L^\infty(\mathbb{R}^n)$.

We use this conclusion in the following way.  Let $\ell$ be a continuous linear functional on $H^1_{\Delta_N}(\mathbb{R}^n)$.  Then by Proposition \ref{p:duality} there is a function $b\in {\rm BMO}_{\Delta_N}(\mathbb{R}^n)$ so that:
$$
\int_{\mathbb{R}^n} f(x) \overline{b(x)}\,dx=\ell(f).
$$
However, by the discussion above, and by restricting the extended linear functional back to $H^1_{\Delta_N}(\mathbb{R}^n)$ we have for $(f,R_{N,1}f,\ldots,R_{N,n}f)=(f_0,\ldots,f_n)$:
$$
\ell(f)=\sum_{j=0}^n \int_{\mathbb{R}^n} f_j(x) \overline{b_j(x)}\,dx.
$$
Using the definition of the $f_j=R_{N,j}f$ we see that:
$$
\ell(f)= \int_{\mathbb{R}^n} f(x)\overline{\left(b_0(x)+\sum_{j=1}^nR_{N,j}^{\ast}b_j(x)\right)}  \,dx.
$$
This then gives the decomposition that any $b\in \rm BMO_{\Delta_N}(\mathbb{R}^n)$ can be written as:
$$
b=b_0+\sum_{j=1}^nR_{N,j}^{\ast}b_j
$$
with $b_j\in L^\infty(\mathbb{R}^n)$.

For the converse, we simply observe that
 from our Theorem \ref{theorem characterization}, we obtained that $R_N$ maps  $H^1_{\Delta_N}(\mathbb{R}^n)$  to $L^1(\mathbb{R}^n)$.  Hence, the boundedness of the Riesz transform $R_N^{\ast}$ from $L^\infty(\mathbb{R}^n)$ to ${\rm{BMO}}_{\Delta_N}(\mathbb{R}^n)$ follows from duality of $H^1_{\Delta_N}(\mathbb{R}^n)$ with ${\rm{BMO}}_{\Delta_N}(\mathbb{R}^n)$.  We then have that any $b$ that can be written as:
$$
b=b_0+\sum_{j=1}^nR_{N,j}^{\ast}b_j
$$
with $b_j\in L^\infty(\mathbb{R}^n)$ must belong to ${\rm BMO}_{\Delta_N}(\mathbb{R}^n)$.
\end{proof}

We next note that  $H^1_{\Delta_N}(\mathbb{R}^n)$ is a proper subspace of the classical  $H^1(\mathbb{R}^n)$, which was proved by Yan in \cite{Yan}*{Proposition 6.2} from the viewpoint of the semigroup generated by $\Delta_N$.
And we now give a direct proof and provide a specific function $f$ which lies in  $H^1(\mathbb{R}^n)$ but does not belong to $H^1_{\Delta_N}(\mathbb{R}^n)$.  A related claim is made in \cite{DDSY}*{Corollary 4.3}.

\begin{theorem}[\cite{Yan}*{Proposition 6.2}]\label{theorem inclusion}
$ H^1_{\Delta_N}(\mathbb{R}^n)\subsetneq H^1(\mathbb{R}^n). $
\end{theorem}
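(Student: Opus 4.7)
The plan is to prove the theorem in two stages: (1) the norm-controlled inclusion $H^1_{\Delta_N}(\mathbb{R}^n) \subseteq H^1(\mathbb{R}^n)$, and (2) the exhibition of an explicit function in $H^1(\mathbb{R}^n) \setminus H^1_{\Delta_N}(\mathbb{R}^n)$. Both stages rest on the characterization just established in Theorem \ref{theorem characterization}: $f \in H^1_{\Delta_N}(\mathbb{R}^n)$ if and only if $f_{+,e}, f_{-,e} \in H^1(\mathbb{R}^n)$, with equivalent norms.

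For the inclusion, the key reduction is the claim that if $g \in H^1(\mathbb{R}^n)$ is even in $x_n$, then $g\chi_{\mathbb{R}^n_+} \in H^1(\mathbb{R}^n)$ with $\|g\chi_{\mathbb{R}^n_+}\|_{H^1(\mathbb{R}^n)} \leq C\|g\|_{H^1(\mathbb{R}^n)}$. Granting this, I would apply it to $g = f_{+,e}$ and to $g = f_{-,e}$ and sum, using the splitting $f = f_{+,e}\chi_{\mathbb{R}^n_+} + f_{-,e}\chi_{\mathbb{R}^n_-}$ together with the triangle inequality. To prove the claim I would take any classical atomic decomposition $g = \sum_j \lambda_j a_j$ of $g$ and symmetrize, setting $a_j^{\mathrm{sym}}(x) := \tfrac{1}{2}(a_j(x) + a_j(\widetilde{x}))$. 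Since $g$ is itself even, $g = \sum_j \lambda_j a_j^{\mathrm{sym}}$, and each $a_j^{\mathrm{sym}}$ is a uniformly bounded multiple of an $H^1$-atom: when the ball $B_j$ supporting $a_j$ does not meet $\{x_n=0\}$, the symmetrized function splits into two atoms living on $B_j$ and its reflection; when $B_j$ does meet the hyperplane, $B_j \cup \widetilde{B_j}$ already sits inside a concentric ball of comparable radius. Crucially, evenness forces $\int_{\mathbb{R}^n_+}a_j^{\mathrm{sym}} = \tfrac{1}{2}\int_{\mathbb{R}^n}a_j^{\mathrm{sym}} = 0$, so the cutoff $a_j^{\mathrm{sym}}\chi_{\mathbb{R}^n_+}$ retains mean zero together with the same support and sup-norm bound, and is therefore again an $H^1$-atom. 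Summing yields the sought atomic decomposition of $g\chi_{\mathbb{R}^n_+}$ with the required norm control.

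For the strict containment I would exhibit the function $f(x) := x_n \phi(x)$, where $\phi \in C_c^\infty(B(0,1))$ is a non-negative bump that is even in $x_n$. Then $f$ is bounded, supported in $B(0,1)$, and $\int f = 0$ by the odd symmetry in $x_n$, so after a suitable normalization $f$ is a classical $H^1$-atom and in particular $f \in H^1(\mathbb{R}^n)$. On the other hand, for $x \in \mathbb{R}^n_+$ one has $f(x) = x_n\phi(x) \geq 0$, so the even extension satisfies $f_{+,e}(x) = |x_n|\phi(x)$, whose integral equals $\int|x_n|\phi(x)\,dx > 0$. Since every element of $H^1(\mathbb{R}^n)$ has vanishing integral, $f_{+,e} \notin H^1(\mathbb{R}^n)$, and Theorem \ref{theorem characterization} then forces $f \notin H^1_{\Delta_N}(\mathbb{R}^n)$.

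The main obstacle is step (1): multiplication by $\chi_{\mathbb{R}^n_+}$ is generally not bounded on $H^1(\mathbb{R}^n)$, so the evenness of $f_{\pm,e}$ must be used in an essential way. The symmetrization of the atomic decomposition is precisely what converts the cutoff into a cancellation-preserving operation; any attempt to run the inclusion purely through the maximal-function or semigroup characterizations runs into boundary-jump contributions that are not controlled by the $H^1_{\Delta_N}(\mathbb{R}^n)$-norm, whereas the interplay between evenness and the atomic mean-zero condition avoids this issue cleanly.
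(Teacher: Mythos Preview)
Your proof is correct, and the counterexample in step~(2) is essentially the paper's example rewritten in $n$ dimensions with a smooth cutoff: an odd-in-$x_n$ compactly supported bounded function is a classical atom, while its even extension off $\mathbb{R}^n_+$ has nonzero integral and so fails to lie in $H^1(\mathbb{R}^n)$.

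Your step~(1), however, takes a genuinely different route. The paper bypasses any direct analysis of $f_{\pm,e}$ and instead argues by duality: since ${\rm BMO}(\mathbb{R}^n)\subset {\rm BMO}_{\Delta_N}(\mathbb{R}^n)$ with norm control (the main result of \cite{DDSY}), and since each Hardy space is normed by pairing against its dual BMO, one immediately gets $\|f\|_{H^1}\lesssim\|f\|_{H^1_{\Delta_N}}$ in one line. Your argument is more constructive: you symmetrize an atomic decomposition of the even function $g=f_{\pm,e}$, observe that evenness forces each symmetrized atom to have vanishing half-integral, and conclude that multiplication by $\chi_{\mathbb{R}^n_\pm}$ preserves the atomic structure up to bounded constants. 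This is longer but self-contained, avoiding the BMO inclusion from \cite{DDSY}; it also makes explicit the mechanism (half-space cancellation forced by symmetry) that Proposition~\ref{prop cancellation} later exploits. The duality argument is slicker; yours gives more insight into why the inclusion holds at the level of atoms.
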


\begin{proof}
We first show that the containment $ H^1_{\Delta_N}(\mathbb{R}^n)\subset H^1(\mathbb{R}^n)$ holds.  This follows directly from the fact that corresponding $\rm BMO$ spaces norm the $H^1$ spaces, namely that:
$$
\left\Vert f\right\Vert_{H^1_{\Delta_N}(\mathbb{R}^n)}\approx \sup_{\left\Vert b\right\Vert_{{\rm BMO}_{\Delta_N}(\mathbb{R}^n)}\leq 1} \left\vert \left\langle f,b\right\rangle_{L^2(\mathbb{R}^n)}\right\vert.
$$
 An identical statement holds for $H^1(\mathbb{R}^n)$ and $\rm BMO(\mathbb{R}^n)$.  As shown in \cite{DDSY},  $  {\rm BMO}({\mathbb R}^n) \subsetneq
{\rm BMO}_{\Delta_N}({\mathbb R}^n)$, and so we have
$$
\left\Vert f\right\Vert_{H^1(\mathbb{R}^n)}\approx \sup_{\left\Vert b\right\Vert_{{\rm BMO}(\mathbb{R}^n)}\leq 1} \left\vert \left\langle f,b\right\rangle_{L^2(\mathbb{R}^n)}\right\vert\leq \sup_{\left\Vert b\right\Vert_{{\rm BMO}_{\Delta_N}(\mathbb{R}^n)}\leq 1} \left\vert \left\langle f,b\right\rangle_{L^2(\mathbb{R}^n)}\right\vert\approx \left\Vert f\right\Vert_{H^1_{\Delta_N}(\mathbb{R}^n)}.
$$
This gives the containment, $ H^1_{\Delta_N}(\mathbb{R}^n)\subset H^1(\mathbb{R}^n)$.

We now show that there exists a function $f\in H^1(\mathbb{R}^n)$ but $f\not\in H^1_{\Delta_N}(\mathbb{R}^n)$.
For the sake of simplicity, we just consider the example in dimension 1.

Define $$ f(x) := \frac{\chi_{ [0,1] }(x)}{\sqrt{2}}  -   \frac{\chi_{ [-1,0) }(x)}{\sqrt{2}}. $$
It is easy to see that $f(x)$ is supported in $[-1,1]$, and $\int_{\mathbb{R}} f(x)\,dx=0$. Moreover, we have
\begin{align*}
\|f\|_{L^2(\mathbb{R})} =1.
\end{align*}
These implies that $f$ is an atom of $H^1(\mathbb{R})$, which shows that $f\in H^1(\mathbb{R})$.  From the definition of $f$,  we obtain that $f_+(x) =  \frac{\chi_{ [0,1] }(x)}{\sqrt{2}}$, and the even extension is
$$ f_{+,e}(x)  =  \frac{\chi_{ [-1,1] }(x)}{\sqrt{2}}.  $$
But, then it is immediate that $f_{+,e}\notin H^1(\mathbb{R})$ since $\int_{\mathbb{R}^n} f_{+,e}(x)\,dx\not=0$.  One can also prove this by using the equivalent definition of $H^1(\mathbb{R})$ via the radial maximal function.

Similarly we have these estimates for $f_{-,e}$.  Hence, $f_{+,e} \not\in H^1(\mathbb{R})$ and $f_{-,e} \not\in H^1(\mathbb{R})$, which, combining the result in Theorem \ref{theorem characterization},  implies that $f\not\in H^1_{\Delta_N}(\mathbb{R}).$
\end{proof}

Finally, we provide a description of the atoms in $H^1_{\Delta_{N}}(\mathbb{R}^n)$ that connects back to the atom in $H^1(\mathbb{R}^n)$.

\begin{prop}\label{prop cancellation}
Suppose $a(x)$ is an $H^1_{\Delta_N}(\mathbb{R}^n)$-atom supported in $B\subset \mathbb{R}^n$ as in Definition \ref{def atom}. Then we have
\begin{align}\label{cancellation}
 \int_{\mathbb{R}^n}a(x)\,dx=0.
\end{align}
Moreover, if $B\cap \{ x\in \mathbb{R}^n: x_n=0 \}\not=\emptyset$, we denote $B_+= B\cap \mathbb{R}^n_+$ and $B_-= B\cap \mathbb{R}^n_-$. Then we have
\begin{align}\label{partial cancellation}
  \int_{B_+}a(x)\,dx= \int_{B_-}a(x)\,dx=0.
\end{align}
\end{prop}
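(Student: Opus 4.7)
The plan is to reduce everything to a single identity on each half-space, exploiting the fact (recorded in \eqref{Delta N}) that $\Delta_N$ acts diagonally with respect to the splitting $\mathbb{R}^n=\mathbb{R}^n_+\cup\mathbb{R}^n_-$. Writing $a_\pm$ and $b_\pm$ for the restrictions of $a$ and $b$ to $\mathbb{R}^n_\pm$, the support and domain conditions in Definition~\ref{def atom} give $a_\pm=\Delta_{N_\pm}^M b_\pm$ with each iterate $\Delta_{N_\pm}^{k}b_\pm$, $k=0,1,\ldots,M-1$, compactly supported in $\overline{B\cap\mathbb{R}^n_\pm}$ and lying in the domain of $\Delta_{N_\pm}$. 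In particular each of these iterates carries the homogeneous Neumann boundary condition on $\{x_n=0\}$ in the sense encoded by the definition of $\Delta_{N_\pm}$. Once \eqref{partial cancellation} is proved, \eqref{cancellation} follows at once by adding the two integrals (and if $B$ happens to lie entirely in one half-space, the missing piece of \eqref{partial cancellation} is vacuously $0$, while the surviving piece still needs the argument below).

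The whole proof then rests on the identity
\begin{equation*}
\int_{\mathbb{R}^n_\pm}\Delta_{N_\pm}g(x)\,dx=0
\end{equation*}
for every $g$ in the domain of $\Delta_{N_\pm}$ with compact support in $\overline{\mathbb{R}^n_\pm}$. The cleanest route, and the one I would take, is via mass conservation of the Neumann heat semigroup: the explicit formula for $p_{t,\Delta_{N_\pm}}$ combined with Fubini and the standard Gaussian integral gives $\int_{\mathbb{R}^n_\pm}p_{t,\Delta_{N_\pm}}(x,y)\,dx=1$ for every $y\in\mathbb{R}^n_\pm$ and $t>0$, hence $\int_{\mathbb{R}^n_\pm}e^{-t\Delta_{N_\pm}}g\,dx=\int_{\mathbb{R}^n_\pm}g\,dx$ for all $t\geq 0$. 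Differentiating this identity at $t=0$ yields the claim. An alternative that avoids the semigroup is to pair $\Delta_{N_\pm}g$ against a cutoff $\phi\in C_c^\infty(\overline{\mathbb{R}^n_\pm})$ that is identically $1$ on a neighborhood of $\operatorname{supp} g$ and satisfies $\partial_{x_n}\phi|_{x_n=0}=0$, then move the Laplacian onto $\phi$ via Green's identity; the boundary terms vanish by the Neumann condition on both $\phi$ and $g$, while $\Delta_{N_\pm}\phi\equiv 0$ wherever $g\neq 0$, so the remaining bulk integral is zero.

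Taking $g=\Delta_{N_\pm}^{M-1}b_\pm$ in the above identity gives $\int_{\mathbb{R}^n_\pm}\Delta_{N_\pm}^{M}b_\pm\,dx=0$, and since $a_\pm$ is supported in $\overline{B\cap\mathbb{R}^n_\pm}$ this integral equals $\int_{B_\pm}a(x)\,dx$, yielding \eqref{partial cancellation}; adding the two halves gives \eqref{cancellation}. The main obstacle is justifying the integration by parts at the level of the abstract domain of $\Delta_N^M$ rather than for smooth functions, since the atoms are only assumed to satisfy $L^\infty$ bounds. The semigroup/mass-preservation route sidesteps this difficulty by reducing the issue to a direct computation with the explicit Gaussian kernel, which is why I would prefer it over the Green's-identity presentation.
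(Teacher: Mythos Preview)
Your argument is correct, but it is genuinely different from the paper's. The paper never unpacks the structure $a=\Delta_N^M b$ at all. Instead it invokes the two big theorems already proved in Section~\ref{s:H1BMO}: since an atom lies in $H^1_{\Delta_N}(\mathbb{R}^n)\subset H^1(\mathbb{R}^n)$ (Theorem~\ref{theorem inclusion}), the classical mean-zero property of $H^1(\mathbb{R}^n)$ gives \eqref{cancellation}; and since, by Theorem~\ref{theorem characterization}, the even extensions $a_{+,e}$ and $a_{-,e}$ belong to $H^1(\mathbb{R}^n)$, the same classical fact gives $\int a_{\pm,e}=0$, which by even symmetry is $2\int_{B_\pm}a$, yielding \eqref{partial cancellation}.

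Your route via mass preservation of the Neumann heat kernel is more self-contained: it uses only the explicit formula for $p_{t,\Delta_{N_\pm}}$ and the definition of an atom, and proves both \eqref{cancellation} and \eqref{partial cancellation} by the same computation, without appealing to the characterization theorems or to properties of the classical Hardy space. The paper's route is shorter in situ because those theorems are already in hand, and it has the incidental advantage that the partial-cancellation conclusion actually holds for \emph{any} $f\in H^1_{\Delta_N}(\mathbb{R}^n)$, not just atoms. Your justification of the key identity $\int_{\mathbb{R}^n_\pm}\Delta_{N_\pm}g=0$ via the semigroup is the right call over the Green's-identity alternative, exactly for the domain-regularity reason you flag; note that it can be made completely rigorous by writing $e^{-t\Delta_{N_\pm}}g-g=-\int_0^t e^{-s\Delta_{N_\pm}}\Delta_{N_\pm}g\,ds$ and integrating in $x$, using mass conservation on the $L^1$ function $\Delta_{N_\pm}g$ directly rather than differentiating at $t=0$.
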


\begin{proof}
First note that from Theorem \ref{theorem inclusion}, $H^1_{\Delta_N}(\mathbb{R}^n)\subsetneq H^1(\mathbb{R}^n)$. Since $a(x)$ is an $H^1_{\Delta_N}(\mathbb{R}^n)$ atom, we have $a(x)\in H^1(\mathbb{R}^n)$, and hence \eqref{cancellation} holds, where we use \cite{Gra}*{Corollary 6.7.7}.

Second, suppose $B\cap \{ x\in \mathbb{R}^n: x_n=0 \}\not=\emptyset$.  Then we define $a_+(x) = a(x)|_{B_+}$ and $a_-(x) = a(x)|_{B_-}$. Since  $a(x)\in H^1_{\Delta_N}(\mathbb{R}^n)$, from Theorem \ref{theorem characterization} we obtain that both $a_{+,e}(x)$ and $a_{-,e}(x)$ are in $H^1(\mathbb{R}^n)$, which implies that $$ \int_{\mathbb{R}^n} a_{+,e}(x)\,dx=\int_{\mathbb{R}^n} a_{-,e}(x)\,dx=0. $$
Next we claim that $\int_{\mathbb{R}^n} a_{+}(x)dx=0$. In fact,
\begin{align*}
\int_{\mathbb{R}^n} a_{+,e}(x)\,dx = \int_{\mathbb{R}^n_+} a_{+,e}(x)\,dx +\int_{\mathbb{R}^n_-} a_{+,e}(x)\,dx = 2\int_{\mathbb{R}^n_+} a_{+,e}(x)\,dx.
\end{align*}
Hence,  $ \int_{\mathbb{R}^n} a_{+,e}(x)\,dx=0$ implies that  $\int_{\mathbb{R}^n} a_{+}(x)\,dx=0$, i.e., $ \int_{B_+}a(x)\,dx=0$.

Similarly we obtain that  $\int_{B_-}a(x)\,dx=0$. Hence \eqref{partial cancellation} holds.
\end{proof}

\begin{remark}
In \cite{W}, it was asked if a proper subspace of the classical Hardy space exists in which the subspace is characterized by maximal functions. This question was answered positively in \cite{UW}. Our result above, Theorem \ref{theorem characterization},  also gives a proper subspace of the classical Hardy space where the subspace is characterized by radial maximal functions as well as non-tangential maximal functions.
\end{remark}

%
%



\section{Weak Factorization of the Hardy space \texorpdfstring{$H^1_{\Delta_N}(\mathbb{R}^n)$}{Neumann H1}}
\setcounter{equation}{0}
\label{s:factorization}

In this section we turn to proving Theorem \ref{weakfactorization}.  There are two parts to this Theorem, and upper and lower bound, and we focus first on the (easier) upper bound.


Recall that, for notational simplicity, we are letting $$\Pi_l(h,g):=h\cdot R^*_{N,l}(g)-g\cdot R_{N,l}(h),$$ where $R_{N,l}={\partial\over\partial x_l} \Delta_N^{-\frac{1}{2}}$ for $1\leq l\leq n$.  We now prove the following theorem.

\begin{theorem}
\label{t:upperbound}
Let $g,h\in L^\infty(\mathbb{R}^n)$ with compact supports.  Then for $1\leq l\leq n$,
$$
\left\Vert \Pi_l(h,g)\right\Vert_{H^1_{\Delta_N}(\mathbb{R}^n)}\leq C \left\Vert g\right\Vert_{L^2(\mathbb{R}^n)}\left\Vert h\right\Vert_{L^2(\mathbb{R}^n)}.
$$
\end{theorem}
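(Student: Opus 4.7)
The plan is to reduce the $H^1_{\Delta_N}(\mathbb{R}^n)$-norm estimate on $\Pi_l(h,g)$ to the $L^2$-boundedness of a Riesz commutator, via the duality $(H^1_{\Delta_N}(\mathbb{R}^n))^{\ast} = \rm BMO_{\Delta_N}(\mathbb{R}^n)$ from Proposition \ref{p:duality}. Since the kernel bounds in Section \ref{s:NeumannRiesz} show that $R_{N,l}$ and $R_{N,l}^{\ast}$ are standard Calder\'on--Zygmund operators, both are bounded on $L^2(\mathbb{R}^n)$; Cauchy--Schwarz then gives $\|\Pi_l(h,g)\|_{L^1(\mathbb{R}^n)} \leq C\|g\|_{L^2(\mathbb{R}^n)}\|h\|_{L^2(\mathbb{R}^n)}$, so $\Pi_l(h,g)$ is a bona fide $L^1$ function whose $H^1_{\Delta_N}$-norm can be detected by testing against $\rm BMO_{\Delta_N}$ functions.

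For arbitrary $b \in \rm BMO_{\Delta_N}(\mathbb{R}^n)$, I would compute the pairing by transferring the Riesz operators onto the opposite factor:
\begin{equation*}
\int_{\mathbb{R}^n}\Pi_l(h,g)\,b\,dx
= \int_{\mathbb{R}^n} g\,R_{N,l}^{\ast}(hb)\,dx - \int_{\mathbb{R}^n} h\,R_{N,l}(gb)\,dx
= -\int_{\mathbb{R}^n} g\,[b,R_{N,l}^{\ast}]h\,dx,
\end{equation*}
where the last step uses the identity $R_{N,l}^{\ast}(bh) = bR_{N,l}^{\ast}h - [b,R_{N,l}^{\ast}]h$ together with the cancellation $\int gb\,R_{N,l}^{\ast}h\,dx = \int h\,R_{N,l}(gb)\,dx$. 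Cauchy--Schwarz, combined with the $L^2$-boundedness of $[b,R_{N,l}^{\ast}]$ (which by $L^2$-adjointness has the same operator norm as $[b,R_{N,l}]$, hence is controlled by $\|b\|_{\rm BMO_{\Delta_N}(\mathbb{R}^n)}$ through the forward direction of Theorem \ref{c:bmo}), yields
\begin{equation*}
\left|\int_{\mathbb{R}^n}\Pi_l(h,g)\,b\,dx\right| \leq C\,\|b\|_{\rm BMO_{\Delta_N}(\mathbb{R}^n)}\,\|g\|_{L^2(\mathbb{R}^n)}\|h\|_{L^2(\mathbb{R}^n)}.
\end{equation*}
Taking the supremum over $b$ with $\|b\|_{\rm BMO_{\Delta_N}(\mathbb{R}^n)}\leq 1$ and invoking the duality completes the argument.

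The main obstacle is therefore the commutator estimate $\|[b,R_{N,l}]\|_{L^2\to L^2} \leq C\|b\|_{\rm BMO_{\Delta_N}(\mathbb{R}^n)}$ itself. Because $\rm BMO_{\Delta_N}(\mathbb{R}^n)$ strictly contains the classical $\rm BMO(\mathbb{R}^n)$ (Theorem \ref{theorem inclusion} and the discussion of \cite{DDSY}), one cannot simply invoke Coifman--Rochberg--Weiss. The natural route, which I view as part of the proof of Theorem \ref{c:bmo} rather than of Theorem \ref{t:upperbound}, is to combine the splitting $R_{N,l} = R_l + K_{N,l}$ from Proposition \ref{p:RieszKernel} with the reflection characterization $b \in \rm BMO_{\Delta_N}(\mathbb{R}^n)$ iff $b_{\pm,e} \in \rm BMO(\mathbb{R}^n)$, so that on each half-space the commutator reduces to a classical Coifman--Rochberg--Weiss commutator applied to an even extension. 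Once that ingredient is in place, the duality scheme above delivers the upper bound immediately.
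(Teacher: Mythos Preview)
Your proposal is correct and follows essentially the same route as the paper: pair $\Pi_l(h,g)$ against $b\in \rm BMO_{\Delta_N}(\mathbb{R}^n)$, rewrite the pairing as $\langle [b,R_{N,l}]g,h\rangle$ (the paper lands on this form directly rather than your equivalent $-\int g\,[b,R_{N,l}^{\ast}]h$), apply Cauchy--Schwarz together with the commutator bound, and take the supremum via duality. Your sketch of the commutator estimate---reduce on each half-space to $[b_{\pm,e},R_l](f_{\pm,e})$ and invoke Coifman--Rochberg--Weiss---is exactly the paper's argument; note that the additive splitting $R_{N,l}=R_l+K_{N,l}$ is not actually needed there, only the identity $R_{N,l}f(x)=R_l(f_{\pm,e})(x)$ on $\mathbb{R}^n_{\pm}$.
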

This will be an immediate corollary of the following theorem.
\begin{theorem}\label{th upper}
If $b\in {\rm BMO}_{\Delta_N}(\mathbb{R}^n)$, then for $1\leq l\leq n$, the commutator
$$[b,R_{N,l}](f)(x)= b(x)R_{N,l}(f)(x) - R_{N,l}(bf)(x) $$
is a bounded map on $L^2(\mathbb{R}^n)$, with operator norm
$$
\|[b,R_{N,l}]:L^2(\mathbb{R}^n)\to L^2(\mathbb{R}^n)\| \leq C\|b\|_{{\rm BMO}_{\Delta_N}(\mathbb{R}^n)}.
$$
\end{theorem}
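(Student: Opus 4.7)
The plan is to reduce the $L^2$ boundedness of $[b, R_{N,l}]$ directly to the classical Coifman--Rochberg--Weiss commutator theorem via the reflection method already exploited in the proof of Theorem \ref{theorem characterization}. The key is a pointwise identity: for any $f \in L^2(\mathbb{R}^n)$,
\begin{equation*}
[b, R_{N,l}](f)(x) = [b_{\pm,e}, R_l](f_{\pm,e})(x), \qquad x \in \mathbb{R}^n_{\pm},
\end{equation*}
where $R_l = \frac{\partial}{\partial x_l}\Delta^{-1/2}$ denotes the classical Riesz transform on $\mathbb{R}^n$.

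To establish this identity, I would combine two ingredients. First, the Heaviside factor $H(x_n y_n)$ in the kernel from Proposition \ref{p:RieszKernel} decouples the two half-spaces, giving $R_{N,l}(f)(x) = R_l(f_{\pm,e})(x)$ for $x \in \mathbb{R}^n_{\pm}$, exactly as verified in the Riesz transform step of the proof of Theorem \ref{theorem characterization}. Second, the product satisfies $(bf)_{\pm,e} = b_{\pm,e}\, f_{\pm,e}$, which is immediate from the definitions of restriction and even extension. Subtracting the Riesz identity applied to $f$ (multiplied by $b$) from the identity applied to $bf$ yields the claimed commutator identity.

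Next, I would invoke the classical Coifman--Rochberg--Weiss theorem to obtain
\begin{equation*}
\|[b_{\pm,e}, R_l]\,g\|_{L^2(\mathbb{R}^n)} \leq C\,\|b_{\pm,e}\|_{\mathrm{BMO}(\mathbb{R}^n)}\,\|g\|_{L^2(\mathbb{R}^n)}
\end{equation*}
for every $g \in L^2(\mathbb{R}^n)$. The characterization of $\mathrm{BMO}_{\Delta_N}(\mathbb{R}^n)$ recalled in Section \ref{s:H1BMO} guarantees that $b_{\pm,e} \in \mathrm{BMO}(\mathbb{R}^n)$ with $\|b_{\pm,e}\|_{\mathrm{BMO}(\mathbb{R}^n)} \leq C\,\|b\|_{\mathrm{BMO}_{\Delta_N}(\mathbb{R}^n)}$. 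Applying the classical bound with $g = f_{\pm,e}$ and using $\|f_{\pm,e}\|_{L^2(\mathbb{R}^n)}^2 = 2\,\|f_{\pm}\|_{L^2(\mathbb{R}^n_{\pm})}^2$ yields the half-space estimate
\begin{equation*}
\|[b, R_{N,l}] f\|_{L^2(\mathbb{R}^n_{\pm})} \leq C\,\|b\|_{\mathrm{BMO}_{\Delta_N}(\mathbb{R}^n)}\, \|f\|_{L^2(\mathbb{R}^n_{\pm})}.
\end{equation*}
Squaring and summing the two half-space bounds produces the desired global $L^2(\mathbb{R}^n)$ estimate. Because this argument is a clean reduction to a classical fact, I do not anticipate any serious obstacle; the only substantive step is the half-space splitting identity, which is forced by the Heaviside factor appearing in Proposition \ref{p:RieszKernel}.
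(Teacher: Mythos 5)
Your proposal is correct and follows essentially the same route as the paper: split $\|[b,R_{N,l}]f\|_{L^2}^2$ over the two half-spaces, use the Heaviside structure of the kernel to reduce to the pointwise identity $[b,R_{N,l}](f)(x)=[b_{\pm,e},R_l](f_{\pm,e})(x)$ on $\mathbb{R}^n_\pm$, invoke the classical Coifman--Rochberg--Weiss theorem, and conclude with $\|b\|_{\mathrm{BMO}_{\Delta_N}}\approx\|b_{+,e}\|_{\mathrm{BMO}}+\|b_{-,e}\|_{\mathrm{BMO}}$. The only cosmetic difference is that you spell out the derivation of the commutator identity (via $R_{N,l}f|_{\mathbb{R}^n_\pm}=R_l f_{\pm,e}$ and $(bf)_{\pm,e}=b_{\pm,e}f_{\pm,e}$) slightly more explicitly than the paper does, and you correctly note that CRW is applied to the classical Riesz transform $R_l$ rather than to $R_{N,l}$.
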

\begin{proof}
Suppose $b$ is in ${\rm BMO}_{\Delta_N}(\mathbb{R}^n)$. Then according to \cite{DDSY}*{Proposition 4.2}, we have that
$b_{+,e}\in {\rm BMO}(\mathbb{R}^n)$ and $b_{-,e}\in \rm BMO(\mathbb{R}^n)$, and moreover,
$$ \|b\|_{{\rm BMO}_{\Delta_N}(\mathbb{R}^n)} \approx  \| b_{+,e}\|_{\rm BMO(\mathbb{R}^n)} + \|b_{-,e}\|_{\rm BMO(\mathbb{R}^n)}. $$

For every $f\in L^2(\mathbb{R}^n)$, we have
\begin{align*}
\|  [b,R_{N,l}](f) \|_{L^2(\mathbb{R}^n)}^2 = \int _{\mathbb{R}^n_+} [b,R_{N,l}](f)(x)^2 \,dx+ \int _{\mathbb{R}^n_-} [b,R_{N,l}](f)(x)^2 \,dx=: I+II.
\end{align*}
For the term $I$, note that when $x\in\mathbb{R}^n_+$, we have
\begin{align*}
[b,R_{N,l}](f)(x)&= b(x)R_{N,l}(f)(x) - R_{N,l}(bf)(x)\\
&= b_{+,e}(x) R_l(f_{+,e})(x) - R_l(b_{+,e}f_{+,e})(x)= [b_{+,e},R_l](f_{+,e})(x),
\end{align*}
which implies that
\begin{align*}
I &= \int _{\mathbb{R}^n_+} [b,R_{N,l}](f)(x)^2 \,dx =   \int _{\mathbb{R}^n_+} [b_{+,e},R_l](f_{+,e})(x)^2 \,dx \\
& \leq  \int _{\mathbb{R}^n} [b_{+,e},R_l](f_{+,e})(x)^2 \,dx\\
&\leq C   \| b_{+,e} \|_{\rm BMO(\mathbb{R}^n)}^2   \| f_{+,e} \|_{L^2(\mathbb{R}^n)}^2,
\end{align*}
where $R_l$ is the classical $l$-th Riesz transform ${\partial\over\partial x_l} \Delta^{-{1\over2}}$,

For the last estimate we use the result \cite{CRW}*{Theorem 1}, which applies since we know from Proposition \ref{Riesz kernel} that $R_{N,l}$ is a Calder\'on--Zygmund kernel.  Similarly we can obtain that
\begin{align*}
II
&\leq C   \| b_{-,e} \|_{\rm BMO(\mathbb{R}^n)}^2   \| f_{-,e} \|_{L^2(\mathbb{R}^n)}^2.
\end{align*}
Combining the estimates for $I$ and $II$ above, we obtain that
\begin{align*}
\left\| [b,R_{N,l}](f) \right\|_{L^2(\mathbb{R}^n)}^2 &\leq C   \| b_{+,e} \|_{\rm BMO(\mathbb{R}^n)}^2   \| f_{+,e} \|_{L^2(\mathbb{R}^n)}^2+ C\| b_{-,e} \|_{\rm BMO(\mathbb{R}^n)}^2   \| f_{-,e} \|_{L^2(\mathbb{R}^n)}^2\\
&\leq  C\| b \|_{\rm BMO_{\Delta_N}(\mathbb{R}^n)}^2  \Big( \| f_{+,e} \|_{L^2(\mathbb{R}^n)}^2+ \| f_{-,e} \|_{L^2(\mathbb{R}^n)}^2\Big)\\
&\leq  C\| b \|_{\rm BMO_{\Delta_N}(\mathbb{R}^n)}^2  \| f \|_{L^2(\mathbb{R}^n)}^2,
\end{align*}
which yields that
$
\left\|[b,R_{N,l}] : L^2(\mathbb{R}^n)\to L^2(\mathbb{R}^n)\right\| \leq C\|b\|_{\rm BMO_{\Delta_N}(\mathbb{R}^n)}.$
\end{proof}

\begin{proof}[Proof of Theorem \ref{t:upperbound}]
By the duality result of \cite{DDSY}, stated in Proposition \ref{p:duality}, we know that $H^1_{\Delta_N}(\mathbb{R}^n)^{*}={\rm BMO}_{\Delta_N}(\mathbb{R}^n)$.  A simple duality computation shows for $b\in {\rm BMO}_{\Delta_N}(\mathbb{R}^n)$ and for any $g,h\in L^\infty(\mathbb R^n)$ with compact supports:
\begin{eqnarray*}
\left\langle b, \Pi_l(g,h)\right\rangle_{L^2(\mathbb{R}^n)}  =  \left\langle b, R_{N,l}^*(g)h-R_{N,l}(h)g\right\rangle_{L^2(\mathbb{R}^n)} = \left\langle g,[b,R_{N,l}]h\right\rangle_{L^2(\mathbb{R}^n)}.
\end{eqnarray*}
Thus,  from Theorem \ref{th upper}, we obtain that
\begin{eqnarray*}
\left|\left\langle b, \Pi_l(g,h)\right\rangle_{L^2(\mathbb{R}^n)}\right| \leq C \|b\|_{{\rm BMO}_{\Delta_N}(\mathbb{R}^n)  }\|g\|_{L^2(\mathbb R^n)} \|h\|_{L^2(\mathbb R^n)}.
\end{eqnarray*}
This, together with the duality of $H^1_{\Delta_N}(\mathbb{R}^n)$ with ${\rm BMO}_{\Delta_N}(\mathbb{R}^n)$ shows that
$\Pi_l(g,h)$ is in $H^1_{\Delta_N}(\mathbb{R}^n)$. And then by testing
$\Pi_l(g,h)$ against $b\in {\rm BMO}_{\Delta_N}(\mathbb{R}^n)$ functions, we find:
\begin{eqnarray*}
\left\Vert \Pi_l(g,h)\right\Vert_{H^1_{\Delta_N}(\mathbb{R}^n)} & \approx &  \sup_{\left\Vert b\right\Vert_{\rm BMO_{\Delta_N}(\mathbb{R}^n)}\leq 1}  \left\vert \left\langle \Pi_l(g,h),b\right\rangle_{L^2(\mathbb{R}^n)}\right\vert\\
& \leq & C \left\Vert g\right\Vert_{L^2(\mathbb{R}^n)}\left\Vert h\right\Vert_{L^2(\mathbb{R}^n)} \sup_{\left\Vert b\right\Vert_{{\rm BMO}_{\Delta_N}(\mathbb{R}^n)}\leq 1} \left\Vert b\right\Vert_{{\rm BMO}_{\Delta_N}(\mathbb{R}^n)}\\
& \leq & C \left\Vert g\right\Vert_{L^2(\mathbb{R}^n)}\left\Vert h\right\Vert_{L^2(\mathbb{R}^n)}.
\end{eqnarray*}

\end{proof}

\subsection{The Lower Bound in Theorem \ref{weakfactorization}}

The proof of the lower bound is more algorithmic in nature and follows a proof strategy developed by Uchiyama in \cite{U}.  We begin with a fact that will play a prominent role in the algorithm below.  It is a modification of a related fact for the standard Hardy space $H^1(\mathbb{R}^n)$.

\begin{lemma}\label{lemma Hardy}
Suppose $f$ is a function satisfying:  $\int_{\mathbb{R}^n} f(x)\,dx=0$, and $|f(x)|\leq \chi_{B(x_0,1)}(x)+\chi_{B(y_0,1)}(x)$, where $|x_0-y_0|:=M>10$. Then we have
\begin{align}
 \|f\|_{H_{\Delta_N}^1(\mathbb{R}^n)} \leq C_n\log M.
\end{align}
\end{lemma}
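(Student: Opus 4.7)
The approach is to reduce the estimate to the classical Uchiyama two-ball inequality for $H^1(\mathbb{R}^n)$ by appealing to the norm equivalence
\[
\|f\|_{H^1_{\Delta_N}(\mathbb{R}^n)} \approx \|f_{+,e}\|_{H^1(\mathbb{R}^n)} + \|f_{-,e}\|_{H^1(\mathbb{R}^n)}
\]
established in Theorem \ref{theorem characterization}. It then suffices to show $\|f_{\pm,e}\|_{H^1(\mathbb{R}^n)}\leq C_n\log M$. The even extension $f_{+,e}$ inherits the pointwise bound $|f_{+,e}|\leq 1$; its support lies in the union of at most four unit balls centered at $x_0,y_0,\widetilde{x_0},\widetilde{y_0}$; and the requirement that $f\in H^1_{\Delta_N}(\mathbb{R}^n)$ forces $f_{\pm,e}\in H^1(\mathbb{R}^n)$ individually, in particular $\int f_{\pm,e}=0$.

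Given these three properties I would carry out the classical Uchiyama decomposition on $f_{+,e}$. First, restrict $f_{+,e}$ to each supporting unit ball and subtract the ball-mean: this produces bounded mean-zero pieces, each a uniformly bounded multiple of a classical $H^1$-atom on a unit ball and hence contributing $O(1)$ to the $H^1$ norm. The residual constants sum to zero and I would redistribute them via telescoping dyadic chains. For the charged pair $\bigl(B(x_0,1),B(y_0,1)\bigr)$, take the doubling sequence $B(x_0,2^k)$ for $k=0,\ldots,K$ with $K=\lceil\log_2 M\rceil$, a single ball of radius $O(M)$ enclosing both, then the symmetric shrinking sequence down to $B(y_0,1)$; telescoping expresses $\chi_{B(x_0,1)}/|B(x_0,1)|-\chi_{B(y_0,1)}/|B(y_0,1)|$ as a sum of $O(\log M)$ differences of the form $\chi_{B_k}/|B_k|-\chi_{B_{k+1}}/|B_{k+1}|$, each of which is an $O(1)$ multiple of a classical $H^1$-atom on $B_{k+1}$. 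The identical chain handles the reflected pair $\bigl(B(\widetilde{x_0},1),B(\widetilde{y_0},1)\bigr)$. Summing gives $\|f_{+,e}\|_{H^1(\mathbb{R}^n)}\leq C_n\log M$, and $f_{-,e}$ is treated symmetrically.

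The geometric heart of the argument, and the reason the constant is $\log M$ rather than $\log(x_{0,n}+y_{0,n}+M)$, is the careful pairing of unit balls by opposite charge: the hypothesis $\int f=0$ forces the charge of $f_{+,e}$ in $B(x_0,1)$ to be the negative of the charge in $B(y_0,1)$, and likewise for the reflected copy, so the telescoping chain never has to traverse the potentially much larger reflection distance $2x_{0,n}$. The main technical obstacle is the bookkeeping when $B(x_0,1)$ or $B(y_0,1)$ is close to or straddles the hyperplane $\{x_n=0\}$: in the straddling case one must split $B(x_0,1)\cap\mathbb{R}^n_\pm$ before taking the even extension, leading to a few additional unit balls in the supports of $f_{\pm,e}$, but the matched-charge pairing principle continues to apply and the $\log M$ bound persists with constants depending only on $n$.
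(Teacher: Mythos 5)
Your route is genuinely different from the paper's. The paper stays entirely on the Neumann side: it bounds the radial maximal function $f_{\Delta_N}^{+}$ directly from the size and H\"older estimates for $p_{t,\Delta_N}$, splitting $\mathbb{R}^n$ into a neighbourhood of the two balls (where the pointwise bound $f_{\Delta_N}^{+}\leq C$ suffices), the annulus $B(x_0,2M)\setminus(B(x_0,5)\cup B(y_0,5))$ (which produces the $\log M$), and the exterior of $B(x_0,2M)$ (where the cancellation $\int f=0$ together with the H\"older estimate \eqref{smooth y} in the $y$-variable gives an $O(1)$ contribution). You instead transfer to classical $H^1(\mathbb{R}^n)$ via $\|f\|_{H^1_{\Delta_N}}\approx\|f_{+,e}\|_{H^1}+\|f_{-,e}\|_{H^1}$ and run the Uchiyama telescoping argument there; that is cleaner in spirit, and the $\log M$ comes for free from the classical estimate rather than being re-derived.

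Two points need repair. First, your justification of $\int f_{\pm,e}=0$ is circular: you invoke ``the requirement that $f\in H^1_{\Delta_N}(\mathbb{R}^n)$'' when membership in $H^1_{\Delta_N}(\mathbb{R}^n)$ is exactly what the lemma is supposed to establish. What actually forces $\int f_{+,e}=0$ is that $\int f=0$ \emph{together with} $B(x_0,1)\cup B(y_0,1)\subset\mathbb{R}^n_+$: then $f_-\equiv 0$ and $\int f_{+,e}=2\int f=0$. Second, your treatment of the straddling case is not a small bookkeeping matter but a genuine obstruction. If $B(x_0,1)$ straddles $\{x_n=0\}$, then $\int_{\mathbb{R}^n_+}f$ need not vanish, so $\int f_{+,e}=2\int_{\mathbb{R}^n_+}f\neq 0$ and $f_{+,e}\notin H^1(\mathbb{R}^n)$ at all; the ``matched-charge pairing principle'' does not automatically apply. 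In that case the conclusion requires the stronger hemisphere cancellation $\int_{\mathbb{R}^n_\pm}f=0$. (This is in fact what holds in the paper's application in the proof of Theorem \ref{thm:ApproxFactorization}, because $\int_{B_\pm}a=0$ by Proposition \ref{prop cancellation}, and the paper's own proof of the lemma also tacitly uses the same restriction, since \eqref{smooth y} only holds when $x,y,y'$ all lie in one half-space.) Once you replace the circular appeal with the support/hemisphere-cancellation argument and make the straddling hypothesis explicit, your proof is a valid alternative to the paper's.
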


\begin{proof}
First note that
\begin{align*}
f_{\Delta_N}^+ (x)&=\sup_{t>0} |e^{-t{\Delta_N}}f(x)|=\sup_{t>0} \left|\int_{\mathbb{R}^n} p_{t,\Delta_N}(x,y)f(y) \,dy\right|\leq  \sup_{t>0} \int_{\mathbb{R}^n} |p_{t,\Delta_N}(x,y)| \,dy \leq C.
\end{align*}
Hence, we obtain that
\begin{align*}
\int_{B(x_0,5)} f_{\Delta_N}^+(x)\,dx + \int_{B(y_0,5)} f_{\Delta_N}^+(x)\,dx\leq C_n.
\end{align*}
Now it suffices to estimate
\begin{align*}
\int_{\mathbb{R}^n\backslash (B(x_0,5) \cup B(y_0,5))} f_{\Delta_N}^+(x)\,dx.
\end{align*}
To see this, we write it as
\begin{align*}
\int_{\mathbb{R}^n\backslash B(x_0, 2M) } f_{\Delta_N}^+(x)\,dx  +\int_{  B(x_0, 2M) \backslash (B(x_0,5) \cup B(y_0,5))} f_{\Delta_N}^+(x)\,dx=: I+II.
\end{align*}
We now estimate the term $I$. First note that from H\"older's regularity  \eqref{smooth y} of the heat kernel $p_{t,\Delta_N}(x,y)$, we have
\begin{align*}
 |p_{t,\Delta_{N}}(x,y) - p_{t,\Delta_{N}}(x,x_0)|\leq C \Big({\frac{|y-x_0|}{  \sqrt{t} +|x-x_0| }}\Big) \frac{ \sqrt{t} }{ (\sqrt{t}+|x-x_0|)^{n+1}  }
 \end{align*}
 for $|y-x_0|< \sqrt{t}$. Moreover,  when $|y-x_0|\geq \sqrt{t}$,  we have
\begin{align*}
 |p_{t,\Delta_{N}}(x,y) - p_{t,\Delta_{N}}(x,x_0)| &\leq |p_{t,\Delta_{N}}(x,y) |+|p_{t,\Delta_{N}}(x,x_0)| \leq C  \frac{e^{-|x-x_0|^2/ct}}{t^{n/2}}  + \frac{e^{-|x-y|^2/ct}}{t^{n/2}}  \\
 &\leq C   \left({\frac{|y-x_0|}{\sqrt{t}}}\right)  \frac{e^{-|x-x_0|^2/ct}}{t^{n/2}}\\
 &\leq C \left({\frac{|y-x_0|}{  \sqrt{t} +|x-x_0| }}\right) \frac{ \sqrt{t} }{ (\sqrt{t}+|x-x_0|)^{n+1}  }.
 \end{align*}

Now note that from the cancellation condition of $f$ and H\"older's regularity of the heat kernel $p_t(x,y)$ as above, we have
\begin{align*}
f_{\Delta_N}^+(x)&=\sup_{t>0} \left|\int_{\mathbb{R}^n} [p_{t,\Delta_{N}}(x,y) - p_{t,\Delta_{N}}(x,x_0)] f(y) \,dy\right|\\
 &\leq C\sup_{t>0} \int_{B(x_0,1)\cup B(y_0,1)}  \left({\frac{|y-x_0|}{  \sqrt{t} +|x-x_0| }}\right) \frac{ \sqrt{t} }{ (\sqrt{t}+|x-x_0|)^{n+1}  }  \,dy \\
&\leq C_n  \frac{|y_0-x_0|}{  |x-x_0|^{n+1} } = C_n \frac{M}{  |x-x_0|^{n+1} }.
\end{align*}
As a consequence, we obtain that
\begin{align*}
I\leq  \int_{\mathbb{R}^n\backslash B(x_0, 2M) } C_n \frac{M}{  |x-x_0|^{n+1} } \,dx \leq C_n.
\end{align*}

We now turn to the term $II$. Note that when $x\in B(x_0, 2M) \backslash (B(x_0,5) \cup B(y_0,5))$,  we have
\begin{align*}
 \left|\int_{\mathbb{R}^n} p_{t,\Delta_{N}}(x,y) f(y) \,dy\right|
 &\leq  \int_{B(x_0,1)}   |p_{t,\Delta_{N}}(x,y)| \,dy +\int_{B(y_0,1)}   |p_{t,\Delta_{N}}(x,y)| \,dy.
\end{align*}
When $t>1$, from the size estimate of the heat kernel $p_{t,\Delta_{N}}(x,y)$, we have
\begin{align*}
 \left|\int_{\mathbb{R}^n} p_{t,\Delta_{N}}(x,y) f(y) \,dy\right|
 &\leq  C\frac{1}{ |x-x_0|^n} + C \frac{1}{|x-y_0|^n}.
\end{align*}
When $t\leq1$, similarly we obtain that
\begin{align*}
 \left|\int_{\mathbb{R}^n} p_{t,\Delta_{N}}(x,y) f(y) \,dy\right|
 &\leq  C\frac{1}{|x-x_0|^{n+1}} + C \frac{1}{|x-y_0|^{n+1}} \leq  C\frac{1}{|x-x_0|^{n}} + C\frac{1}{|x-y_0|^{n}}.
\end{align*}
Thus,
\begin{align*}
II&\leq  \int_{  B(x_0, 2M) \backslash (B(x_0,5) \cup B(y_0,5))} f_{\Delta_N}^+(x)\,dx\\
&\leq C\int_{  B(x_0, 2M) \backslash (B(x_0,5) \cup B(y_0,5))} \frac{1}{|x-x_0|^{n}} +  \frac{1}{|x-y_0|^{n}} \,dx\\
&\leq C_n\log M.
\end{align*}

Combining all the estimates above, we obtain that
\begin{align*}
 \|f\|_{H_{\Delta_{N}}^1(\mathbb{R}^n)}  = \|f_{\Delta_N}^+\|_{L^1(\mathbb{R}^n)} \leq C_n\log M.
\end{align*}
\end{proof}


Suppose $1\leq l\leq n$.
Ideally, given an $H^1_{\Delta_{N}}(\mathbb{R}^n)$-atom $a$, we would like to find $g,h\in L^2(\mathbb{R}^n)$ such that $\Pi_l(g,h)=a$ pointwise.  While this can't be accomplished in general, the Theorem below shows that it is ``almost'' true.

\begin{theorem}
\label{thm:ApproxFactorization}
Suppose $1\leq l\leq n$. For every $H^1_{\Delta_N}(\mathbb{R}^n)$-atom $a(x)$ and for all $\varepsilon>0$ there exist
a large positive number $M$ and $g,h\in L^\infty(\mathbb{R}^n)$ with compact supports such that:
$$
\left\Vert a-\Pi_l(h,g)\right\Vert_{H^1_{\Delta_{N}}(\mathbb{R}^n)}<\varepsilon
$$
and $\left\Vert g\right\Vert_{L^2(\mathbb{R}^n)}\left\Vert h\right\Vert_{L^2(\mathbb{R}^n)}\leq C M^n$.
\end{theorem}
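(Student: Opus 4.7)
My plan is to adapt Uchiyama's algorithmic construction to the Neumann Laplacian setting. Fix an $H^1_{\Delta_N}(\mathbb{R}^n)$-atom $a$ supported on a ball $B=B(x_0,r)$. By Proposition \ref{prop cancellation}, if $B$ straddles $\{x_n=0\}$ then $a=a_++a_-$ with each piece mean-zero and supported in one half-space, so I may assume $B\subset\mathbb{R}^n_+$; the straddling case reduces to handling $a_+$ and $a_-$ separately.

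Next, I choose an auxiliary point $y_0\in\mathbb{R}^n_+$ with $|y_0-x_0|=M$, arranged so that $|(y_0)_l-(x_0)_l|\asymp M$ and $(y_0)_n\asymp M$; this last condition ensures that the reflected term $K_{N,l}$ of the Neumann Riesz kernel in Proposition \ref{p:RieszKernel} remains on the same scale as the classical term and does not create unwanted cancellation. The large parameter $M$ will be fixed at the end in terms of $\varepsilon$. Put $\tilde{B}=B(y_0,r)$ and set
\begin{equation*}
g=\frac{\chi_{\tilde{B}}}{|\tilde{B}|},\qquad \alpha=R_{N,l}g(x_0),\qquad h=\frac{a}{\alpha}.
\end{equation*}
Proposition \ref{p:RieszKernel} and the geometric placement of $y_0$ give $|\alpha|\asymp M^{-n}$, so $\|g\|_{L^2}\|h\|_{L^2}\lesssim(M/r)^n$, which will be the finite quantity $M(\varepsilon)$.

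Then I compute $E=\Pi_l(h,g)-a$. Since $g$ and $h$ have disjoint supports, $gh\equiv 0$ and $E$ vanishes off $B\cup\tilde{B}$; on $B$ one has $E=E_1:=h(R_{N,l}g-\alpha)\chi_B$, while on $\tilde{B}$ one has $E=E_2:=-gR_{N,l}^{\ast}h\chi_{\tilde{B}}$. The smoothness estimates on $R_{N,l}$ proved in Section \ref{s:NeumannRiesz}, combined with $\int a=0$ (hence $\int h=0$), give pointwise bounds of the form $|E_j|\lesssim r^{1-n}M^{-1}$ on the respective balls.

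Finally, to pass from pointwise control to the $H^1_{\Delta_N}$-norm, set $c_1=\int E_1=-\int E_2$ and pick normalized bumps $\psi_B,\psi_{\tilde B}$ supported in $B,\tilde{B}$ respectively. Decompose
\begin{equation*}
E=(E_1-c_1\psi_B)+(E_2+c_1\psi_{\tilde B})+c_1(\psi_B-\psi_{\tilde B}).
\end{equation*}
Each of the first two summands is a mean-zero $L^\infty$ function supported in a single ball inside $\mathbb{R}^n_+$, hence by Theorem \ref{theorem characterization} a multiple of an $H^1_{\Delta_N}$-atom, contributing $\lesssim r/M$ to the norm. The last summand is exactly of the form controlled by Lemma \ref{lemma Hardy} after a dilation by $r$, yielding norm $\lesssim|c_1|\log(M/r)\lesssim(r/M)\log(M/r)$. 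Choosing $M=M(\varepsilon)$ large enough (polynomially in $\varepsilon^{-1}$) drives the total below $\varepsilon$.

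The main obstacle will be a careful verification of the pointwise error bounds on $E_1$ and $E_2$: the kernel $R_{N,l}$ is a sum of the classical Riesz kernel and the reflected piece $K_{N,l}$ from Proposition \ref{p:RieszKernel}, and one must check that both enjoy the smoothness and cancellation properties needed to run the usual Uchiyama calculus. The condition $(y_0)_n\asymp M$ is the crucial geometric input that makes both pieces behave uniformly; without it the reflected term could either vanish or dominate and spoil the balance between $E_1$ and $E_2$.
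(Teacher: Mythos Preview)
Your approach is essentially the paper's (Uchiyama's construction: $g$ a bump at a far point $y_0$, $h=a/R_{N,l}g(x_0)$, then estimate the error), with a few deviations worth flagging.

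First, the bump decomposition $E=(E_1-c_1\psi_B)+(E_2+c_1\psi_{\tilde B})+c_1(\psi_B-\psi_{\tilde B})$ is unnecessary. Since $\int a=0$ by Proposition~\ref{prop cancellation} and $\int(gR_{N,l}^*h-hR_{N,l}g)=0$ identically, the full error $E$ already has mean zero together with the two-bump pointwise bound, so Lemma~\ref{lemma Hardy} applies to $E$ directly (after a dilation by $r$) and gives $\|E\|_{H^1_{\Delta_N}}\lesssim(\log M)/M$ in one stroke. This is exactly what the paper does; your extra splitting works but is redundant.

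Second, your scaling is off: take $|y_0-x_0|=Mr$, not $M$. With your choice the product $\|g\|_{L^2}\|h\|_{L^2}\lesssim(M/r)^n$ depends on the atom through $r$, and so would the $M$ you choose at the end; the theorem requires a bound depending only on $\varepsilon$. With $|y_0-x_0|=Mr$ (and either normalization of $g$) one gets $\|g\|_{L^2}\|h\|_{L^2}\lesssim M^n$, uniform in the atom.

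Third, your reduction to $B\subset\mathbb{R}^n_+$ by writing $a=a_++a_-$ is natural, but note it yields an approximation of $a$ by a \emph{sum} of two bilinear forms $\Pi_l(h_+,g_+)+\Pi_l(h_-,g_-)$, not a single $\Pi_l(h,g)$ as the theorem asserts. This is harmless for the downstream weak factorization (Theorem~\ref{t:UchiyamaFactor}), but does not literally prove Theorem~\ref{thm:ApproxFactorization}. The paper instead keeps the original atom, places $y_0$ so that $B(y_0,r)\subset\mathbb{R}^n_+$, and in the estimate of the far piece $W_2$ uses that $R_{N,l}(x,y)$ vanishes across $\{x_n=0\}$ together with the partial cancellation $\int a_+=0$ from Proposition~\ref{prop cancellation}.
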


\begin{proof}

Let $a(x)$ be an $H^1_{\Delta_N}(\mathbb{R}^n)$-atom, supported in $B(x_0,r)$.  We first consider the construction of the bilinear form $\Pi_l(h,g)$ for $1\leq l \leq n-1$ and the approximation 
to $a(x)$.  To begin with, for the ball $B(x_0,r)$, we now consider the following cases:  Case 1: $x_{0,n}\geq 0$; \ \  Case 2: $x_{0,n}<0$.



We first consider Case 1. To begin with,
fix $\varepsilon>0$.  Choose $M\in [100,\infty)$ sufficiently large so that $ \frac{\log M}{M} <\varepsilon$. Now select $y_0\in\mathbb{R}^n_+$
in the following way:
%
%
\smallskip
%
 for $1\leq i\leq n$, choose $y_{0,i} >0 $ such that  $ y_{0,i}-x_{0,i}=\frac{Mr}{\sqrt{n}}$, where $x_{0,i}$ (reps. $y_{0,i}$) is the $i$th coordinate of $x_{0}$ (reps. $y_{0}$).


\smallskip

Note that for this $y_0$, it is clear that $B(y_0,r)\subset \mathbb{R}^n_+$ and 
we have $|x_0-y_0|={Mr}$. Moreover, for any $y\in B(y_0,r)$, we also have
$|x_0-y|>{Mr\over2}$.  We set
\begin{align}\label{gh}
 g(x):=\chi_{B(y_0,r)}(x)\quad{\rm and}\quad h(x):= -\frac{a(x)}{R_{N,l}^*g(x_0)}.  
\end{align}

We first claim that
\begin{align}\label{claim degenerate}
\left|R_{N,l}^*g(x_0) \right|\geq C M^{-n}, \qquad  1\leq l\leq n-1.
\end{align}
In fact, for $l=1,\ldots,n-1$, from Proposition \ref{p:RieszKernel}, we have
\begin{align*}
R_{N,l}^*g(x_0) &=\left |\int_{B(y_0,r)}R_{N,l}(y,x_0)\,dy \right| \\
&= C_n \left|\int_{B(y_0,r)}  \left( \frac{y_l-x_{0,l}}{|x_0-y|^{n+1}} +  \frac{y_l-x_{0,l}}{(|x'_0-y'|^2+|x_{0,n}+y_n|^2)^{\frac{n+1}{2}}} \right)\, dy\right|\\
&= C_n\left |y_l-x_{0,l}\right| \left|\int_{B(y_0,r)}  \left( \frac{1}{|x_0-y|^{n+1}} +  \frac{1}{(|x'_0-y'|^2+|x_{0,n}+y_n|^2)^{\frac{n+1}{2}}} \right) \,dy\right|\\
&\geq C Mr \int_{B(y_0,r)}   \frac{1}{|x_0-y|^{n+1}} \,dy \geq CM^{-n}.
\end{align*}
As a consequence, we get that the claim \eqref{claim degenerate} holds.

As for Case 2, we handle it in a symmetric way as follows. Fix $\varepsilon>0$.  Choose $M\in [100,\infty)$ sufficiently large so that $ \frac{\log M}{M} <\varepsilon$. Now select $y_0\in\mathbb{R}^n_+$
in the following way:
%
%
\smallskip
%
 for $1\leq i\leq n$, choose $y_{0,i} >0 $ such that  $ y_{0,i}-x_{0,i}=-\frac{Mr}{\sqrt{n}}$.  Note that for this $y_0$, it is clear that $B(y_0,r)\subset \mathbb{R}^n_-$ and  we have $|x_0-y_0|={Mr}$. Moreover, for any $y\in B(y_0,r)$, we also have $|x_0-y|>{Mr\over2}$.  We now define the functions $g$ and $h$ as in \eqref{gh}, and the following the same estimates, we can obtain that
the claim \eqref{claim degenerate} holds.



From the definitions of the functions $g$ and $h$, we obtain that $\operatorname{supp}\,g(x)=B(y_0,r)$ and $\operatorname{supp}\,h(x)=B(x_0,r)$. Moreover, from \eqref{claim degenerate} we obtain that
$$\|g\|_{L^2(\mathbb{R}^n)} \approx r^{\frac{n}{2}}\quad\textnormal{ and }\quad \|h\|_{L^2(\mathbb{R}^n)} =  \frac{1}{|R_{N,l}g(x_0)|} \|a\|_{L^2(\mathbb{R}^n)}\leq C M^n r^{-\frac{n}{2}}. $$
Hence $\|g\|_{L^2(\mathbb{R}^n)} \|h\|_{L^2(\mathbb{R}^n)} \leq CM^n$.  Now write
\begin{align*}
a(x)-\left(h(x)R_{N,l}^*g(x)- g(x)R_{N,l}h(x)\right)&= a(x) \frac{R_{N,l}^*g(x_0)-R_{N,l}^*g(x)}{R_{N,l}^*g(x_0)} - g(x)R_{N,l}h(x)\\
&=: W_1(x)+W_2(x).
\end{align*}
By definition, it is obvious that $W_1(x)$ is supported on $B(x_0,r)$ and $W_2(x)$ is supported on $B(y_0,r)$.

We first turn to $W_1(x)$. For $x\in B(x_0,r)$,
\begin{align*}
|W_1(x)| &= |a(x)|\frac{ |R_{N,l}^*g(x_0) -R_{N,l}^*g(x) |}{R_{N,l}^*g(x_0) }\\
&\leq  C  M^n \|a\|_{L^\infty(\mathbb{R}^n)}  \int_{B(y_0,r)} | R_{N,l}(y,x_0)-R_{N,l}(y,x) | \,dy\\
&\leq  C\frac{M^n}{ r^n}  \int_{B(y_0,r)} \frac{|x-x_0|}{|x-y|^{n+1} } \,dy\\ 
&\leq C\frac{1}{M r^n}.
\end{align*}
Hence $ |W_1(x)|\leq  C\frac{1}{ M r^n} \chi_{B(x_0,r)}(x)$.

We next estimate $W_2(x)$. From the definition of $g(x)$, we have
\begin{align*}
|W_2(x)| &= \chi_{B(y_0,r)}(x) |R_{N,l}h(x)|\\
&=\chi_{B(y_0,r)}(x)  \frac{1}{|R_{N,l}^*g(x_0)|} \left| \int_{B(x_0,r)} R_{N,l}(x,y) a(y) \,dy \right|\\
&=\chi_{B(y_0,r)}(x)  \frac{1}{|R_{N,l}^*g(x_0)|} \left| \int_{B(x_0,r)} R_{N,l}(x,y) a_+(y) \,dy \right|,
\end{align*}
where the last equality follows from the fact that $x\in B(y_0,r)\subset \mathbb{R}^n_+$ and from the definition of the Riesz kernel $R_N(x,y)$ as in \eqref{Riesz kernel}.
Hence, from the cancellation property of $a_+(y)$, we get
\begin{align*}
|W_2(x)| &=\chi_{B(y_0,r)}(x)  \frac{1}{|R_{N,l}^*g(x_0)|} \bigg| \int_{B(x_0,r)} (R_{N,l}(x,y)-R_{N,l}(x,x_0) )a_+(y) \,dy \bigg|\\
&\leq C\chi_{B(y_0,r)}(x) M^{n}  \int_{B(x_0,r)}\|a\|_{L^\infty(\mathbb{R}^n)} \frac{ |y-x_0|}{|x-x_0|^{n+1} } \,dy\\
&\leq \frac{C}{M r^n } \chi_{B(y_0,r)}(x).
\end{align*}

Combining the estimates of $W_1$ and $W_2$, we obtain that
\begin{align}\label{size}
 \Big|a(x)-\left(h(x)R_{N,l}^*g(x)- g(x)R_{N,l}h(x)\right)\Big|\leq  \frac{C}{M r^n}(\chi_{B(x_0,r)}(x)+\chi_{B(y_0,r)}(x)).
\end{align}
Next we point out that
\begin{align}\label{cancellation R}
&\int \Big[a(x)-\left(h(x)R_{N,l}^*g(x)- g(x)R_{N,l}h(x)\right) \Big] dx\\
&= \int a(x) dx - \int \left(h(x)R_{N,l}^*g(x)- g(x)R_{N,l}h(x)\right) dx\nonumber \\
&= 0,\nonumber
\end{align}
since $a(x)$ has cancellation (Proposition \ref{prop cancellation}) and the second integral equals 0 just by the definitions of $g$ and $h$.

Then the size estimate \eqref{size} and the cancellation \eqref{cancellation R},  together with Lemma \ref{lemma Hardy}, imply that
$$ \Big\|a(x)-\left(h(x)R_{N,l}^*g(x)- g(x)R_{N,l}h(x)\right)\Big\|_{H^1_{\Delta_N}(\mathbb{R}^n)} \leq C \frac{\log M}{M} <C\epsilon. $$
This proves the result for $1\leq l\leq n-1$.

We now consider the  the bilinear form $\Pi_n(g,h)$ and its approximation to $a(x)$.  Again, for the ball $B(x_0,r)$, we now consider the following cases:  Case 1: $x_{0,n}\geq 0$; \ \  Case 2: $x_{0,n}<0$. 

It suffices to consider
the Case 1 since the other can be handled symmetrically.  In this case, for $x_0$ with $x_{0,n}\geq 0$,
 choose  $y_0$ such that  $ y_{0,i}-x_{0,i}=\frac{Mr}{\sqrt{n}}$ for $i=1,\ldots,n$.

We now define the functions $g$ and $h$ as in \eqref{gh}. 
This, together with  Proposition \ref{p:RieszKernel}, yields  
\begin{align*}
R_{N,l}^*g(x_0) &=\left|\int_{B(y_0,r)}R_{N,n}(y,x_0)\,dy \right|
\\&=C_n \left|\int_{B(y_0,r)} \left( \frac{y_n-x_{0,n}}{|x_0-y|^{n+1}} +  \frac{x_{0,n}+y_n}{(|x'_0-y'|^2+|x_{0,n}+y_n|^2)^{\frac{n+1}{2}}} \right)\,dy\right| \\
&\geq C_n \left|\int_{B(y_0,r)}  \frac{y_n-x_{0,n}}{|x_0-y|^{n+1}}  \,dy\right| \\
&= C_n \left|y_n-x_{0,n}\right| \left|\int_{B(y_0,r)} \frac{1}{ |x_0-y|^{n+1}} \,dy\right| \\
&\geq C M^{-n}.
\end{align*}
Hene, we  obtain that
the claim \eqref{claim degenerate} holds for these $g$ and $h$.

Now  following the approximation as that for $R_{N,l}$ with $1\leq l\leq n-1$, we obtain that
\begin{align}\label{eeee}
\Big\|a(x)-\left(h(x)R_{N,l}^*g(x)- g(x)R_{N,l}h(x)\right)\Big\|_{H^1_{\Delta_N}(\mathbb{R}^n)} \leq C \frac{\log M}{M} <C\epsilon. 
\end{align}

\end{proof}

With this approximation result, we can now prove the main Theorem \ref{weakfactorization}, restated below for the convenience of the reader.


\begin{theorem}
\label{t:UchiyamaFactor}
Suppose $1\leq l \leq n$.  For any $f\in H^1_{\Delta_{N}}(\mathbb{R}^n)$ there exists sequences $\{\lambda_j^k\}\in \ell^1$ and functions $g_j^{k}, h_j^{k}\in L^\infty(\mathbb{R}^n)$ with compact supports such that $$f=\sum_{k=1}^\infty\sum_{j=1}^{\infty} \lambda_{j}^{k}\Pi_l(g_j^{k}, h_j^{k}).$$  Moreover, we have that:
$$
\left\Vert f\right\Vert_{H^1_{\Delta_{N}}(\mathbb{R}^n)} \approx \inf\left\{\sum_{k=1}^\infty \sum_{j=1}^{\infty} \left\vert \lambda_j^k\right\vert \left\Vert g_j^k\right\Vert_{L^2(\mathbb{R}^n)}\left\Vert h_j^k\right\Vert_{L^2(\mathbb{R}^n)}: f=\sum_{k=1}^\infty\sum_{j=1}^{\infty}\lambda_j^k \,\Pi_l(g_j, h_j) \right\}.
$$
\end{theorem}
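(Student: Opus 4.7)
The plan is to establish the norm equivalence in two parts. The direction
$$
\|f\|_{H^1_{\Delta_N}(\mathbb{R}^n)} \leq C\inf\left\{\sum_{k,j}|\lambda_j^k|\,\|g_j^k\|_{L^2(\mathbb{R}^n)}\|h_j^k\|_{L^2(\mathbb{R}^n)}\right\}
$$
follows immediately from Theorem \ref{t:upperbound}: for any representation $f=\sum_{k,j}\lambda_j^k\Pi_l(g_j^k,h_j^k)$, the triangle inequality combined with $\|\Pi_l(g,h)\|_{H^1_{\Delta_N}} \leq C\|g\|_{L^2}\|h\|_{L^2}$ yields the bound and, taking the infimum, the inequality. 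The existence of the factorization and the reverse inequality form the substantive content, and I will obtain both simultaneously by iterating the approximate factorization for atoms (Theorem \ref{thm:ApproxFactorization}), following the scheme of Uchiyama \cite{U}.

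Given $f\in H^1_{\Delta_N}(\mathbb{R}^n)$, I would first invoke the atomic characterization provided by Theorem \ref{theorem characterization} to write $f=\sum_i\mu_i^{(0)}a_i^{(0)}$ with $a_i^{(0)}$ being $H^1_{\Delta_N}$-atoms and $\sum_i|\mu_i^{(0)}|\leq C_0\|f\|_{H^1_{\Delta_N}(\mathbb{R}^n)}$. Fix $\varepsilon_0=\frac{1}{2C_0}$, and apply Theorem \ref{thm:ApproxFactorization} to each atom to obtain $g_i^{(0)},h_i^{(0)}\in L^2(\mathbb{R}^n)$ with $\|g_i^{(0)}\|_{L^2}\|h_i^{(0)}\|_{L^2}\leq CM(\varepsilon_0)$ and residuals
$$
r_i^{(0)}:=a_i^{(0)}-\Pi_l(h_i^{(0)},g_i^{(0)}),\qquad \|r_i^{(0)}\|_{H^1_{\Delta_N}(\mathbb{R}^n)}<\varepsilon_0.
$$
Setting $R_1:=\sum_i\mu_i^{(0)}r_i^{(0)}$, one has $\|R_1\|_{H^1_{\Delta_N}}\leq \varepsilon_0\sum_i|\mu_i^{(0)}|\leq\tfrac12\|f\|_{H^1_{\Delta_N}}$. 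Repeat the procedure on $R_1$, producing $R_2$ with $\|R_2\|_{H^1_{\Delta_N}}\leq 2^{-2}\|f\|_{H^1_{\Delta_N}}$, and so on; at stage $k$ one has atomic coefficients $\mu_i^{(k)}$ satisfying $\sum_i|\mu_i^{(k)}|\leq 2^{-k}C_0\|f\|_{H^1_{\Delta_N}}$ together with factors $g_i^{(k)},h_i^{(k)}$ with $\|g_i^{(k)}\|_{L^2}\|h_i^{(k)}\|_{L^2}\leq CM(\varepsilon_0)$.

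Summing across all stages yields
$$
\sum_{k=0}^{\infty}\sum_i |\mu_i^{(k)}|\,\|g_i^{(k)}\|_{L^2}\|h_i^{(k)}\|_{L^2}\leq CM(\varepsilon_0)C_0\sum_{k=0}^{\infty}2^{-k}\|f\|_{H^1_{\Delta_N}}\leq C\|f\|_{H^1_{\Delta_N}(\mathbb{R}^n)},
$$
so the double series $\sum_{k,i}\mu_i^{(k)}\Pi_l(h_i^{(k)},g_i^{(k)})$ converges absolutely in $H^1_{\Delta_N}(\mathbb{R}^n)$ (by completeness and Theorem \ref{t:upperbound}); its sum is $f$ because $R_k\to 0$ in $H^1_{\Delta_N}$. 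After relabelling with the single pair $(k,j)$ required by the theorem's statement, the representation and the matching norm bound are obtained.

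The principal obstacle is already absorbed into Theorem \ref{thm:ApproxFactorization}: the need to produce, for an arbitrary atom, a bilinear approximant whose error is \emph{small in the $H^1_{\Delta_N}$ norm} (rather than merely pointwise or in $L^2$), with quantitative control on $\|g\|_{L^2}\|h\|_{L^2}$. This is what allowed the single-stage reduction by a factor $\varepsilon_0<1/C_0$. The remaining delicate point in the iteration is to ensure that the geometric rate $\varepsilon_0 C_0<1$ is compatible with the finiteness of $M(\varepsilon_0)$; since Theorem \ref{thm:ApproxFactorization} allows arbitrarily small $\varepsilon$ at the cost of a finite (though possibly large) $M(\varepsilon)$, this is harmless and only affects the overall constant.
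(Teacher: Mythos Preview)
Your proposal is correct and follows essentially the same iterative scheme as the paper: invoke the atomic decomposition, apply Theorem \ref{thm:ApproxFactorization} to each atom with a fixed $\varepsilon$ chosen so that $\varepsilon C_0<1$, collect the bilinear pieces, and iterate on the residual to obtain geometric decay. The only differences are cosmetic (you fix $\varepsilon_0=1/(2C_0)$ explicitly and track the factor $M(\varepsilon_0)$ in the $L^2$ bound, whereas the paper simply requires $\varepsilon C<1$ and absorbs $M(\varepsilon)$ into the constant).
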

\begin{proof}
By Theorem \ref{t:upperbound} we have that $\left\Vert \Pi_l(g,h)\right\Vert_{H^1_{\Delta_{N}}(\mathbb{R}^n)}\leq C \left\Vert g\right\Vert_{L^2(\mathbb{R}^n)} \left\Vert h\right\Vert_{L^2(\mathbb{R}^n)}$, it is immediate that we have for any representation of $f=\sum_{k=1}^\infty\sum_{j=1}^\infty\lambda_j^k\, \Pi_l(g_j^{k}, h_j^{k})$ that
$$
\left\Vert f\right\Vert_{H^1_{\Delta_{N}}(\mathbb{R}^n)} \leq C \inf\left\{\sum_{k=1}^\infty\sum_{j=1}^{\infty}\left\vert \lambda_j^k\right\vert \left\Vert g_j^k\right\Vert_{L^2(\mathbb{R}^n)}\left\Vert h_j^k\right\Vert_{L^2(\mathbb{R}^n)}: f=\sum_{k=1}^\infty\sum_{j=1}^{\infty}\lambda_j^k \,\Pi_l(g_j^k, h_j^k) \right\}.
$$

We turn to show that the other inequality hold and that it is possible to obtain such a decomposition for any $f\in H^1_{\Delta_{N}}(\mathbb{R}^n)$.  By the atomic decomposition for $H^1_{\Delta_{N}}(\mathbb{R}^n)$, Theorem \ref{theorem characterization}, for any $f\in H^1_{\Delta_{N}}(\mathbb{R}^n)$ we can find a sequence $\{\lambda_{j}^{1}\}\in \ell^1$ and sequence of $H^1_{\Delta_{N}}(\mathbb{R}^n)$-atoms $a_j^{1}$ so that $f=\sum_{j=1}^{\infty} \lambda_j^{1} a_{j}^{1}$ and $\sum_{j=1}^{\infty} \left\vert \lambda_j^{1}\right\vert \leq C_0 \left\Vert f\right\Vert_{H^1_{\Delta_{N}}(\mathbb{R})}$.

We explicitly track the implied absolute constant $C_0$ appearing from the atomic decomposition since it will play a role in the convergence of the approach.  Fix $\varepsilon>0$ so that $\varepsilon C_0<1$. Then we also have a large positive number $M$ with ${\log M \over M}<\epsilon$.  We apply Theorem \ref{thm:ApproxFactorization} to each atom $a_{j}^{1}$.  So there exists $g_j^{1}, h_j^{1}\in L^\infty(\mathbb{R}^n)$ with compact supports and satisfying  $\left\Vert g_j^{1}\right\Vert_{L^2(\mathbb{R}^n)}\left\Vert h_j^{1}\right\Vert_{L^2(\mathbb{R}^n)}\leq CM^n $ and
\begin{equation*}
\left\Vert a_j^{1}-\Pi_l(g_j^{1}, h_j^{1})\right\Vert_{H^1_{\Delta_{N}}(\mathbb{R}^n)}<\varepsilon\quad \forall j.
\end{equation*}
Now note that we have
\begin{eqnarray*}
f & = & \sum_{j=1}^{\infty} \lambda_{j}^{1} a_j^{1}=   \sum_{j=1}^{\infty} \lambda_{j}^{1} \,\Pi_l(g_j^{1}, h_j^{1})+\sum_{j=1}^{\infty} \lambda_{j}^{1} \left(a_j^{1}-\Pi_l(g_j^{1}, h_j^{1})\right) :=  M_1+E_1.
\end{eqnarray*}
Observe that we have
\begin{eqnarray*}
\left\Vert E_1\right\Vert_{H^1_{\Delta_{N}}(\mathbb{R}^n)} & \leq & \sum_{j=1}^{\infty} \left\vert \lambda_j^{1}\right\vert \left\Vert a_j^{1}-\Pi_l(g_j^{1}, h_j^{1})\right\Vert_{H^1_{\Delta_{N}}(\mathbb{R}^n)} \leq  \varepsilon \sum_{j=1}^{\infty} \left\vert \lambda_j^{1}\right\vert \leq \varepsilon C_0\left\Vert f\right\Vert_{H^1_{\Delta_{N}}(\mathbb{R}^n)}.
\end{eqnarray*}
We now iterate the construction on the function $E_1$.  Since $E_1\in H^1_{\Delta_{N}}(\mathbb{R}^n)$, we can apply the atomic decomposition in $H^1_{\Delta_{N}}(\mathbb{R}^n)$, Theorem \ref{theorem characterization}, to find a sequence $\{\lambda_j^{2}\}\in \ell^1$ and a sequence of $H^1_{\Delta_{N}}(\mathbb{R}^n)$-atoms $\{a_j^{2}\}$ so that $E_1=\sum_{j=1}^{\infty} \lambda_j^{2} a_{j}^{2}$ and
$$
\sum_{j=1}^{\infty} \left\vert \lambda_j^{2}\right\vert \leq C_0 \left\Vert E_1\right\Vert_{H^1_{\Delta_{N}}(\mathbb{R}^n)}\leq \varepsilon C_0^2 \left\Vert f\right\Vert_{H^1_{\Delta_{N}}(\mathbb{R}^n)}.
$$

Again, we will apply Theorem \ref{thm:ApproxFactorization} to each atom $a_{j}^{2}$.  So there exist $g_j^{2}, h_j^{2}\in L^\infty(\mathbb{R}^n)$ with compact supports and satisfying  $\left\Vert g_j^{2}\right\Vert_{L^2(\mathbb{R}^n)}\left\Vert h_j^{2}\right\Vert_{L^2(\mathbb{R}^n)}\leq C M^n$ and
\begin{equation*}
\left\Vert a_j^{2}-\Pi_l(g_j^{2}, h_j^{2})\right\Vert_{H^1_{\Delta_{N}}(\mathbb{R}^n)}<\varepsilon,\quad \forall j.
\end{equation*}
We then have that:
\begin{eqnarray*}
E_1 & = & \sum_{j=1}^{\infty} \lambda_{j}^{2} a_j^{2}=   \sum_{j=1}^{\infty} \lambda_{j}^{2}\, \Pi_l(g_j^{2}, h_j^{2})+\sum_{j=1}^{\infty} \lambda_{j}^{2} \left(a_j^{2}-\Pi_l(g_j^{2}, h_j^{2})\right) :=  M_2+E_2.
\end{eqnarray*}
But, as before observe that
\begin{eqnarray*}
\left\Vert E_2\right\Vert_{H^1_{\Delta_{N}}(\mathbb{R}^n)} & \leq & \sum_{j=1}^{\infty} \left\vert \lambda_j^{2}\right\vert \left\Vert a_j^{2}-\Pi_l(g_j^{2}, h_j^{2})\right\Vert_{H^1_{\Delta_{N}}(\mathbb{R}^n)} \leq  \varepsilon \sum_{j=1}^{\infty} \left\vert \lambda_j^{2}\right\vert \leq \left(\varepsilon C_0\right)^{2}\left\Vert f\right\Vert_{H^1_{\Delta_{N}}(\mathbb{R}^n)}.
\end{eqnarray*}
And, this implies for $f$ that we have:
\begin{eqnarray*}
f & = & \sum_{j=1}^{\infty} \lambda_{j}^{1} a_j^{1} =   \sum_{j=1}^{\infty} \lambda_{j}^{1} \,\Pi_l(g_j^{1}, h_j^{1})+\sum_{j=1}^{\infty} \lambda_{j}^{1} \left(a_j^{1}-\Pi_l(g_j^{1}, h_j^{1})\right)\\
 & = & M_1+E_1=M_1+M_2+E_2 =  \sum_{k=1}^{2} \sum_{j=1}^{\infty} \lambda_{j}^{k} \,\Pi_l(g_j^{k}, h_j^{k})+E_2.
\end{eqnarray*}

Repeating this construction for each $1\leq k\leq K$ produces functions $g_j^{k}, h_j^{k}\in L^\infty(\mathbb{R}^n)$ with compact supports and  satisfying $\left\Vert g_j^{k}\right\Vert_{L^2(\mathbb{R}^n)}\left\Vert h_j^{k}\right\Vert_{L^2(\mathbb{R}^n)}\leq C M^n $ for all $j$, sequences $\{\lambda_{j}^{k}\}\in \ell^1$ with $\left\Vert \{\lambda_{j}^{k}\}\right\Vert_{\ell^1}\leq \varepsilon^{k-1} C_0^k \left\Vert f\right\Vert_{H^1_{\Delta_{N}}(\mathbb{R}^n)}$, and a function $E_K\in H^1_{\Delta_{N}}(\mathbb{R}^n)$ with $\left\Vert E_K\right\Vert_{H^1_{\Delta_{N}}(\mathbb{R}^n)}\leq \left(\varepsilon C_0\right)^{K}\left\Vert f\right\Vert_{H^1_{\Delta_{N}}(\mathbb{R}^n)}$ so that
$$
f=\sum_{k=1}^{K} \sum_{j=1}^\infty \lambda_{j}^{k} \,\Pi_l(g_j^{k}, h_j^{k})+E_K.
$$
Passing $K\to\infty$ gives the desired decomposition of $f=\sum_{k=1}^{\infty} \sum_{j=1}^\infty \lambda_{j}^{k}\, \Pi_l(g_j^{k}, h_j^{k})$.  We also have that:
$$
\sum_{k=1}^{\infty} \sum_{j=1}^\infty\left\vert \lambda_{j}^{k}\right\vert \leq \sum_{k=1}^{\infty} \varepsilon^{-1} (\varepsilon C_0)^{k} \left\Vert f\right\Vert_{H^1_{\Delta_{N}}(\mathbb{R}^n)}= \frac{ C_0}{1-\varepsilon C_0}\left\Vert f\right\Vert_{H^1_{\Delta_{N}}(\mathbb{R}^n)}.
$$
\end{proof}

Finally, we dispense with the proof of Theorem \ref{c:bmo}.

\begin{proof}[Proof of Theorem \ref{c:bmo}]
The upper bound in this theorem is contained in Theorem \ref{t:upperbound}.  

For the lower bound, we first note that from Theorem \ref{theorem characterization}, $H^1_{\Delta_N}(\mathbb{R}^n)$ 
has equivalent characterizations via atoms, which shows that $H^1_{\Delta_N}(\mathbb{R}^n)\cap L^\infty_{c}(\mathbb{R}^n)$
is dense in $H^1_{\Delta_N}(\mathbb{R}^n)$ with respect to the $H^1_{\Delta_N}(\mathbb{R}^n)$ 
norm, where we use $ L^\infty_{c}(\mathbb{R}^n)$ to denote the $L^\infty$ function with compact supports.

Then using the weak factorization in Theorem \ref{weakfactorization} we have that for $f\in H^1_{\Delta_N}(\mathbb{R}^n)\cap  L^\infty_{c}(\mathbb{R}^n)$, 
\begin{eqnarray*}
\left|\left\langle b,f\right\rangle_{L^2(\mathbb{R}^n)}\right|  \leq  \sum_{k=1}^\infty\sum_{j=1}^\infty \left|\lambda_j^k\right| \left|\left\langle b, \Pi_l(g_j^k, h_j^k)\right\rangle_{L^2(\mathbb{R}^n)}\right| =  \sum_{k=1}^\infty\sum_{j=1}^\infty \left|\lambda_j^k\right| \left|\left\langle  g_j^k, [b,R_{N,l}]h_j^k\right\rangle_{L^2(\mathbb{R}^n)}\right|.
\end{eqnarray*}
Hence we have that
\begin{eqnarray*}
\left\vert \left\langle b,f\right\rangle_{L^2(\mathbb{R}^n)}\right\vert &  \leq  & \sum_{k=1}^{\infty}\sum_{j=1}^\infty \left\vert \lambda_j^k\right\vert \left\Vert [b,R_{N,l}](h_j^k)\right\Vert_{L^2(\mathbb{R}^n)}\left\Vert g_j^k\right\Vert_{L^2(\mathbb{R}^n)}\\
& \leq & \left\Vert [b,R_{N,l}] :L^2(\mathbb{R}^n)\to L^2(\mathbb{R}^n)\right\Vert \sum_{k=1}^{\infty}\sum_{j=1}^\infty \left\vert \lambda_j^k\right\vert \left\Vert g_j^k\right\Vert_{L^2(\mathbb{R}^n)}\left\Vert h_j^k\right\Vert_{L^2(\mathbb{R}^n)}\\
& \leq  & C \left\Vert [b,R_{N,l}] :L^2(\mathbb{R}^n)\to L^2(\mathbb{R}^n)\right\Vert \left\Vert f\right\Vert_{H^1_{\Delta_N}(\mathbb{R}^n)}.
\end{eqnarray*}
By the duality between $\rm BMO_{\Delta_N}(\mathbb{R}^n)$ and $H^1_{\Delta_N}(\mathbb{R}^n)$ we have that:
$$
\left\Vert b\right\Vert_{\rm BMO_{\Delta_N}(\mathbb{R}^n)}\approx \sup_{\left\Vert f\right\Vert_{H^1_{\Delta_N}(\mathbb{R}^n)}\leq 1} \left\vert \left\langle b,f\right\rangle_{L^2(\mathbb{R}^n)}\right\vert\leq C \left\Vert [b,R_{N,l}] :L^2(\mathbb{R}^n)\to L^2(\mathbb{R}^n)\right\Vert.
$$
\end{proof}

\section{The fractional integrals: proof of Theorem \ref{c:bmo 1}}

Suppose $b$ is in $\rm BMO_{\Delta_N}(\mathbb{R}^n)$. Then according to \cite{DDSY}*{Proposition 4.2}, we have that
$b_{+,e}\in \rm BMO(\mathbb{R}^n)$ and $b_{-,e}\in \rm BMO(\mathbb{R}^n)$, and moreover,
$$ \|b\|_{\rm BMO_{\Delta_N}(\mathbb{R}^n)} \approx  \| b_{+,e}\|_{\rm BMO(\mathbb{R}^n)} + \|b_{-,e}\|_{\rm BMO(\mathbb{R}^n)}. $$

For every $f\in L^p(\mathbb{R}^n)$, we have
\begin{align*}
\|  [b,\Delta_N^{-\alpha/2}](f) \|_{L^q(\mathbb{R}^n)}^q = \int _{\mathbb{R}^n_+} [b,\Delta_N^{-\alpha/2}](f)(x)^q \,dx+ \int _{\mathbb{R}^n_-} [b,\Delta_N^{-\alpha/2}](f)(x)^q \,dx=: I+II.
\end{align*}
For the term $I$, note that when $x\in\mathbb{R}^n_+$, we have
\begin{align*}
[b,\Delta_N^{-\alpha/2}](f)(x)&= b(x)\Delta_N^{-\alpha/2}(f)(x) - \Delta_N^{-\alpha/2}(bf)(x)\\
&= b_{+,e}(x) \Delta^{-\alpha/2}(f_{+,e})(x) - \Delta^{-\alpha/2}(b_{+,e}f_{+,e})(x)\\
&= [b_{+,e},\Delta^{-\alpha/2}](f_{+,e})(x),
\end{align*}
which implies that
\begin{align*}
I &= \int _{\mathbb{R}^n_+} [b,\Delta^{-\alpha/2}](f)(x)^q \,dx =   \int _{\mathbb{R}^n_+} [b_{+,e},\Delta^{-\alpha/2}](f_{+,e})(x)^q \,dx \\
& \leq  \int _{\mathbb{R}^n} [b_{+,e},\Delta^{-\alpha/2}](f_{+,e})(x)^q \,dx\\
&\leq C   \| b_{+,e} \|_{\rm BMO(\mathbb{R}^n)}^q   \| f_{+,e} \|_{L^p(\mathbb{R}^n)}^q.
\end{align*}

For the last estimate we use the result \cite{CRW}*{Theorem 1}, which applies since we know from Proposition \ref{Riesz kernel} that $R_{N,l}$ is a Calder\'on--Zygmund kernel.  Similarly we can obtain that
\begin{align*}
II
&\leq C   \| b_{-,e} \|_{\rm BMO(\mathbb{R}^n)}^q   \| f_{-,e} \|_{L^2(\mathbb{R}^n)}^q.
\end{align*}
Combining the estimates for $I$ and $II$ above, we obtain that
\begin{align*}
\left\| [b,\Delta_N^{-\alpha/2}](f) \right\|_{L^q(\mathbb{R}^n)}^q &\leq C   \| b_{+,e} \|_{\rm BMO(\mathbb{R}^n)}^q   \| f_{+,e} \|_{L^p(\mathbb{R}^n)}^q+ C\| b_{-,e} \|_{\rm BMO(\mathbb{R}^n)}^q   \| f_{-,e} \|_{L^p(\mathbb{R}^n)}^q\\
&\leq  C\| b \|_{\rm BMO_{\Delta_N}(\mathbb{R}^n)}^q  \Big( \| f_{+,e} \|_{L^p(\mathbb{R}^n)}^q+ \| f_{-,e} \|_{L^p(\mathbb{R}^n)}^q\Big)\\
&\leq  C\| b \|_{\rm BMO_{\Delta_N}(\mathbb{R}^n)}^q  \| f \|_{L^p(\mathbb{R}^n)}^q,
\end{align*}
which yields that
$
\left\|[b,R_{N,l}] : L^p(\mathbb{R}^n)\to L^q(\mathbb{R}^n)\right\| \leq C\|b\|_{\rm BMO_{\Delta_N}(\mathbb{R}^n)}.$

\bigskip

{\bf Acknowledgments:} 
The authors would like to thank the referee for careful reading of this paper and for the helpful comments and suggestions, which made this paper more accurate.
 The authors would like to thank Professors Xuan Thinh Duong and Dongyong Yang for helpful discussions and for Professors Lixin Yan and Liang Song for providing their paper \cite{SY}, which hadn't appeared when the authors started this project in January 2015.

\begin{bibdiv}
\begin{biblist}

\bib{AuDM}{article}{
   author={Auscher, P.},
   author={Duong, X. T.},
   author={McIntosh, A.},
   title={Boundedness of Banach space valued singular integral operators and Hardy spaces},
   status={preprint},
   date={2005},
   number={2},
   pages={286--347}
}

\bib{AMR}{article}{
   author={Auscher, P.},
   author={McIntosh, A.},
   author={Russ, E.},
   title={Hardy spaces of differential forms on Riemannian manifolds},
   journal={J. Geom. Anal.},
   volume={18},
   date={2008},
   pages={192--248}
}

\bib{Chanillo}{article}{
   author={Chanillo, S.},
   title={A note on commutators},
   journal={ Indiana Univ. Math. J.},
   volume={31},
   date={1982},
   pages={7--16}
}




\bib{CRW}{article}{
   author={Coifman, R. R.},
   author={Rochberg, R.},
   author={Weiss, G.},
   title={Factorization theorems for Hardy spaces in several variables},
   journal={Ann. of Math. (2)},
   volume={103},
   date={1976},
   number={3},
   pages={611--635}
}

\bib{DDSY}{article}{
   author={Deng, D. G.},
   author={Duong, X. T.},
   author={Sikora, A.},
   author={Yan, L. X.},
   title={Comparison of the classical BMO with the BMO spaces associated
   with operators and applications},
   journal={Rev. Mat. Iberoam.},
   volume={24},
   date={2008},
   number={1},
   pages={267--296}
}

\bib{DL}{article}{
   author={Duong, X. T.},
   author={Li, J.},
   title={Hardy spaces associated to operators satisfying Davies-Gaffney estimates and bounded holomorphic functional calculus},
   journal={J. Funct. Anal.},
   volume={264},
   date={2013},
   pages={1409--1437}
}

\bib{DY1}{article}{
   author={Duong, X. T.},
   author={Yan, L. X.},
   title={New function spaces of BMO type, the John-Nirenberg inequality,
   interpolation, and applications},
   journal={Comm. Pure Appl. Math.},
   volume={58},
   date={2005},
   number={10},
   pages={1375--1420}
}

\bib{DY2}{article}{
   author={Duong, X. T.},
   author={Yan, L. X.},
   title={Duality of Hardy and BMO spaces associated with operators with
   heat kernel bounds},
   journal={J. Amer. Math. Soc.},
   volume={18},
   date={2005},
   number={4},
   pages={943--973 (electronic)}
}

\bib{FS}{article}{
   author={Fefferman, C.},
   author={Stein, E. M.},
   title={$H^{p}$ spaces of several variables},
   journal={Acta Math.},
   volume={129},
   date={1972},
   number={3-4},
   pages={137--193}
}

\bib{Gra}{book}{
   author={Grafakos, L.},
   title={Modern Fourier analysis},
   series={Graduate Texts in Mathematics},
   volume={250},
   edition={3},
   publisher={Springer, New York},
   date={2014},
   pages={xvi+624}
}

\bib{HM}{article}{
   author={Hofmann, S.},
   author={Mayboroda, S.},
   title={Hardy and BMO spaces associated to divergence form elliptic operators},
   journal={Math. Ann.},
   volume={344},
   number={1}
   date={2009},
   pages={37--166}
}

\bib{HLMMY}{article}{
   author={Hofmann, S.},
   author={Lu, G.},
   author={Mitrea, D.},
   author={Mitrea, M.},
   author={Yan, L.X.},
   title={Hardy spaces associated to non-negative self-adjoint opera-
tors satisfying Davies--Gaffney estimates},
   journal={Mem. Amer. Math. Soc.},
   volume={214},
   date={2011}
}

\bib{JY}{article}{
   author={Jiang, R.},
   author={Yang, D.},
   title={New Orlicz--Hardy spaces associated with divergence form elliptic operators},
   journal={J. Funct. Anal.},
   volume={258},
   number={4},
   date={2010},
   pages={1167--1224}
}




\bib{SY}{article}{
   author={Song, L.},
   author={Yan, L.},
   title={A maximal function characterization for hardy spaces associated to nonnegative self-adjoint operators satisfying gaussian estimates},
   journal={Advances in Mathematics}, 
   volume={287},
   date={2016},
   pages={463--484}
}



\bib{S}{book}{
author={Strauss, W. A.},
title={Partial differential equation: An introduction},
publisher={John Wiley \& Sons, Inc., New York},
date={2008},
pages={xiv+454}
}


\bib{U}{article}{
   author={Uchiyama, A.},
   title={The factorization of $H^{p}$ on the space of homogeneous type},
   journal={Pacific J. Math.},
   volume={92},
   date={1981},
   number={2},
   pages={453--468}
}

\bib{UW}{article}{
   author={Uchiyama, A.},
   author={Wilson, J. M.},
   title={Approximate identities and $H^{1}({\bf R})$},
   journal={Proc. Amer. Math. Soc.},
   volume={88},
   date={1983},
   number={1},
   pages={53--58}
}

\bib{W}{article}{
   author={Weiss, G.},
   title={Some problems in the theory of Hardy spaces},
   conference={
      title={Harmonic analysis in Euclidean spaces},
      address={Proc. Sympos. Pure Math., Williams Coll., Williamstown,
      Mass.},
      date={1978},
   },
   book={
      series={Proc. Sympos. Pure Math., XXXV, Part},
      publisher={Amer. Math. Soc., Providence, R.I.},
   },
   date={1979},
   pages={189--200}
}

\bib{Yan}{article}{
   author={Yan, L. X.},
   title={Classes of Hardy spaces associated with operators, duality theorem
   and applications},
   journal={Trans. Amer. Math. Soc.},
   volume={360},
   date={2008},
   number={8},
   pages={4383--4408}
}

\end{biblist}
\end{bibdiv}

\end{document}